\theoremstyle{definition}
\newtheorem{definition}{Definition}[section]
\newtheorem{example}[definition]{Example}
\newtheorem{assumption}[definition]{Assumption}
\newtheorem*{notation}{Notation}
\newtheorem*{question}{Question}
\theoremstyle{plain}
\newtheorem{proposition}[definition]{Proposition}
\newtheorem{theorem}[definition]{Theorem}
\newtheorem{corollary}[definition]{Corollary}
\newtheorem{lemma}[definition]{Lemma}
\newtheorem*{theorem*}{Theorem}
\theoremstyle{remark}
\newtheorem{remark}[definition]{Remark}
\newcommand{\id}{\textrm{\normalfont id}}
\newcommand{\R}{\mathbb{R}}
\newcommand{\C}{\mathbb{C}}
\newcommand{\D}{\mathcal{D}}
\newcommand{\catname}[1]{{\normalfont\textsf{#1}}}
\newcommand{\Alg}[1]{\catname{Alg}_{#1}}
\DeclareMathOperator{\DStr}{\mathcal{D}\text{\normalfont -Str}}
\DeclareMathOperator{\DoneStr}{\mathcal{D}_1\text{\normalfont -Str}}
\DeclareMathOperator{\DtwoStr}{\mathcal{D}_2\text{\normalfont -Str}}
\DeclareMathOperator{\DonetwoStr}{\mathcal{D}_1 \mathcal{D}_2\text{\normalfont -Str}}
\DeclareMathOperator{\DtwooneStr}{\mathcal{D}_2 \mathcal{D}_1\text{\normalfont -Str}}
\title[Weil descent of algebras with free operators]{The Weil descent functor in the category of algebras with free operators}
\author{Shezad Mohamed}
\address{Shezad Mohamed, Department of Mathematics, University of Manchester, Oxford Road, Manchester, United Kingdom M13 9PL}
\email{shezad.mohamed@manchester.ac.uk}
\date{\today}
\subjclass[2020]{12H05, 12H10, 14A99}
\keywords{Weil descent, left adjoints, differential algebra, difference algebra, fields with operators}
\thanks{{\em Acknowledgements}: This research was supported by a University of Manchester Research Scholar Award.}
\begin{document}

\begin{abstract}
We prove that there exists a version of Weil descent, or Weil restriction, in the category of $\D$-algebras. The objects of this category are $k$-algebras $R$ equipped with a homomorphism $e \colon R \to R \otimes_k \D$ for some fixed field $k$ and finite-dimensional $k$-algebra $\D$. We do this under a mild assumption on the so-called associated endomorphisms. In particular, this yields the existence of the Weil descent functor in the category of difference algebras, which, to our knowledge, does not appear elsewhere.
\end{abstract}

\maketitle

\section{Introduction}

It is well known that if $L/K$ is a finite field extension, then extension of scalars, considered as a functor $F \colon \Alg{K} \to \Alg{L}$, has a left adjoint, $W$. That is, $W$ is a functor $\Alg{L} \to \Alg{K}$ such that there is a natural bijection $\text{Hom}_{\Alg{K}}(W(C),R) \to \text{Hom}_{\Alg{L}}(C, F(R))$. This left adjoint is known as Weil descent or Weil restriction---see, for instance, Section~1.3 of \cite{weil-adeles}. In fact, this result has been generalised by Grothendieck to the case when $K$ is an arbitrary commutative ring and $L$ is a $K$-algebra which, as a $K$-module, is free and finitely generated (see \cite{Grothendieck1958-1960}).

This classical Weil descent has been used in applications to number theory \cite{smart} and algebraic geometry \cite{milne_arithmetic_1972}. It is also fundamental to the construction of prolongation spaces in the sense of Moosa-Scanlon \cite{moosa_scanlon_2010}. Furthermore, since the adjunction gives rise to the natural bijection $\text{\normalfont Hom}_{\Alg{K}}(W(C),K) \to \text{\normalfont Hom}_{\Alg{L}}(C,L)$, when $C$ is the coordinate ring of an affine $L$-variety $V$, we obtain a bijection between the $L$-rational points of $V$ and the $K$-rational points of the variety $\text{Spec}(W(C))$. This fact is used by Pop in \cite{pop96} to show that algebraic extensions of large fields are large.

In \cite{differential_weil_descent} the case of differential algebras is considered. The authors show that the differential base change functor, $F^\delta$, has a left adjoint, which they call the differential Weil descent functor, $W^\delta$. More precisely, they show that if $(A,\partial)$ is a differential ring and $(B,d)$ an $(A,\partial)$-algebra, where $B$ is finite and free as an $A$-module, then for any $(B,d)$-algebra $(D,\delta)$, there exists a unique derivation $\delta^W$ on $W(D)$ making the unit of the classical adjunction into a differential ring homomorphism. The authors then use this result in a similar way to Pop to show that algebraic extensions of differentially large fields are again differentially large (see \cite{leon_sanchez_differentially_2020}).

It is natural, then, to explore whether the difference base change functor, $F^\sigma$---here difference rings are rings equipped with an endomorphism---also has a left adjoint. In general, it does not. Let $A$ be a commutative unital ring and consider the case when $B = A[\varepsilon]/(\varepsilon^2)$ for an indeterminate $\varepsilon$. Let $\tau \colon B \to B$ be given by $\tau(a+b\varepsilon) = a$ so that $(A,\id_A) \leq (B, \tau)$. Let $R = B[x]$ and $\rho \colon R \to R$ the unique endomorphism extending $\tau$ and sending $x \mapsto \varepsilon$. If $F^\sigma = F^\sigma_{B/A}$ had a left adjoint $W^\sigma$, then the unit of this adjunction at $R$
\[
\eta^\sigma_R \colon R \to F^\sigma W^\sigma(R)
\]
would be a difference ring homomorphism. In particular
\begin{equation}\tag{$\dagger$}
    \eta^\sigma_R(\rho(x)) = (\theta \otimes \tau)(\eta^\sigma_R(x))
\end{equation}
where $\theta$ is the endomorphism of $W^\sigma(R)$. Let $\lambda_1$ and $\lambda_2$ be the coordinate projections with respect to the $A$-basis $\{1, \varepsilon \}$ of $B$. Then, equation $(\dagger)$ translates to
\[
\mleft[
\begin{array}{c}
  \lambda_1(\rho(x)) \\
  \lambda_2(\rho(x))
\end{array}
\mright]
=
\mleft[
\begin{array}{c c}
    \lambda_1(\tau(1)) & \lambda_1(\tau(\varepsilon)) \\
    \lambda_2(\tau(1)) & \lambda_2(\tau(\varepsilon))
\end{array}
\mright]
\mleft[
\begin{array}{c}
    \theta(\lambda_1(\eta^\sigma_R(x))) \\
    \theta(\lambda_2(\eta^\sigma_R(x)))
\end{array}
\mright]
\]
See Lemma~\ref{technical-lemma} for details on this. Using the facts $\rho(x) = \varepsilon$, $\tau(1) = 1$, and $\tau(\varepsilon) = 0$, the above yields 
\[
\mleft[
\begin{array}{c}
  0 \\
  1
\end{array}
\mright]
=
\mleft[
\begin{array}{c c}
    1 & 0 \\
    0 & 0
\end{array}
\mright]
\mleft[
\begin{array}{c}
    \theta(\lambda_1(W^\sigma_R(x))) \\
    \theta(\lambda_2(W^\sigma_R(x)))
\end{array}
\mright]
\]
which is clearly inconsistent. Hence, equation $(\dagger)$ cannot hold, and the left adjoint $W^\sigma$ cannot exist. The issue here is that the $2 \times 2$ matrix on the right-hand side that we associate to $(B,\tau)$ is not invertible. In this case we say that $\tau$ does not have invertible matrix. We will see in the course of Section~5 that $\tau$ having invertible matrix is sufficient for a left adjoint to exist, and, in Section~6, that in the case when $A$ is a field, it is also necessary.

\begin{theorem*}
Let $(A,\sigma)$ be a difference ring and $(B,\tau)$ a difference $(A, \sigma)$-algebra where $B$ is finitely generated and free as an $A$-module, and $\tau$ has invertible matrix. If $(C,\rho)$ is a difference $(B,\tau)$-algebra, then there is a unique endomorphism $\rho^W$ on the classical Weil restriction, $W(C)$, making $(W(C),\rho^W)$ into a difference $(A, \sigma)$-algebra and the unit of the classical adjunction $\eta_C \colon C \to W(C) \otimes_A B$ into a difference ring homomorphism $(C,\rho) \to (W(C) \otimes_A B, \rho^W \otimes \tau)$. The assignment $(C, \rho) \mapsto (W(C), \rho^W)$ is the left adjoint to the difference base change functor.
\end{theorem*}

One might initially think to define $\rho^W = W(\rho)$. However, while $\rho$ is a ring endomorphism, it is not in general a $B$-algebra homomorphism. Thus $W(\rho)$ does not exist. We will see in Section~5 that this idea works once we ``twist'' the $B$-algebra structure on $C$.

In fact, we prove our results in the more general framework of $\D$-operators, originally introduced in \cite{moosa_scanlon_2010}, of which endomorphisms are a special case. We refer the reader to Section~3 for details on $\D$-rings, but give a brief presentation here. For a fixed field $k$ and a fixed finite-dimensional $k$-algebra $\D$, by a $\D$-ring we mean a $k$-algebra $R$ equipped with a $k$-algebra homomorphism $e \colon R \to R \otimes_k \D$. If we coordinatise $e$ with respect to a basis of $\D$, we obtain a sequence of additive operators $R \to R$---the coordinate maps of $e$ with respect to the chosen basis. These are often called free operators in the literature (for instance in \cite{ozlem_2018} and \cite{moosa_scanlon_2013}). Differential rings fit into this framework of $\D$-rings since $\delta$ is a derivation on $R$ if and only if the map
\begin{align*}
    R &\to R[\varepsilon]/(\varepsilon^2) \\
    r &\mapsto r + \delta(r) \, \varepsilon
\end{align*}
is a homomorphism, as do difference rings (trivially) since $\sigma$ is an endomorphism of $R$ if and only if the map
\begin{align*}
    R & \to R \\
    r & \mapsto \sigma(r)
\end{align*}
is a homomorphism. For further details and examples see Example~\ref{D-ring-examples}. 

Now, let $(A,e)$ be a $\D$-ring and $(B,f)$ an $(A,e)$-algebra where $B$ is finite and free as an $A$-module. The $\D$-structure on $B$ has associated endomorphisms (on $B$, see Definition~\ref{associated-endomorphisms}) and, as in the difference case, if the associated endomorphisms of $(B,f)$ do not have invertible matrix, then the left adjoint to the $\D$-base change functor (see Corollary~\ref{D-base-change}) does not generally exist. Nonetheless, our main result states that this is indeed the main obstacle: if the associated endomorphisms of $(B,f)$ have invertible matrix, then the $\D$-base change functor has a left adjoint, and if $A$ is a field, this condition is necessary. See Theorem~\ref{main-theorem} and Corollary~\ref{corollary-inv-matrix-is-necessary}.

Besides being of independent interest, our work here on the $\D$-Weil descent is partly motivated by a model-theoretic study of a uniform companion for theories of large $\D$-fields, called $\text{UC}_\D$, in the spirit of the uniform companion of Tressl for theories of large differential fields, UC, established in \cite{tressl_uniform_2005}. In a follow-up paper we show that algebraic extensions of models of $\text{UC}_\D$ which are large as a field are again models of $\text{UC}_\D$---assuming that $\D$ is a local ring and the associated endomorphism is trivial. We can then conclude that the algebraic closure of such a $\D$-field is a model of the theory $\D$-$\text{CF}_0$ of Moosa-Scanlon \cite{moosa_scanlon_2013}. The argument we have in mind to prove these results relies on the existence of a $\D$-Weil descent---the content of this paper. In the differential case this argument is done in \cite{leon_sanchez_differentially_2020}.

The paper is organised as follows: Section~2 gives a brief overview of the classical Weil descent and left adjoints in general. Section~3 then gives an introduction to $\D$-rings and $\D$-algebras, as well as some algebraic notions about them. It also recalls the definition of the $\D$-base change functor. Section~4 examines sufficient conditions for the $\D$-Weil descent to exist, and Section~5 contains the proof of our main result. Section~6 contains a partial converse to the main theorem, as well as some results on properties preserved under taking the Weil descent. In particular, it will explain how to apply our results in the case of several commuting endomorphisms and derivations. Finally, the appendix shows how to explicitly construct the $\D$-Weil descent, mirroring the construction in the classical case.

The author would like to thank Omar Le\'{o}n S\'{a}nchez for several helpful discussions on the content of this paper and for support while writing it, as well as the anonymous referee for their helpful comments and questions, which prompted a new, more natural proof of the main theorem. The original proof is sketched in the appendix.

Throughout this paper we assume that all rings and algebras are commutative and unital, and that ring and algebra homomorphisms preserve the unit.

\section{Preliminaries on the classical Weil descent}

In this section we briefly go over the details of the construction of the classical Weil descent. We will not give proofs, but the reader can consult \cite[\S7.6]{neron_1990} and \cite[\S2]{moosa_scanlon_2010} for further details. Our approach is modelled after \cite{differential_weil_descent}, so the reader can also consult there for a more in-depth explanation.

Let $A$ be a ring, and $B$ an $A$-algebra. For any $A$-algebra $R$ we can form the base change of $R$ to $B$, namely $R \otimes_A B$, where the $B$-algebra structure is given by $b \mapsto 1 \otimes b$. This base change naturally extends to a functor $F \colon \Alg{A} \to \Alg{B}$ where we set $F(\phi) = \phi \otimes \id_B$. If we let $G \colon \Alg{B} \to \Alg{A}$ be the scalar restriction functor, where $G(C)$ is the $A$-algebra given by composing $A \to B \to C$, then $G$ is right adjoint to $F$. More importantly, if $B$ is free of finite rank as an $A$-module, then $F$ has a left adjoint: Weil restriction $W \colon \Alg{B} \to \Alg{A}$.

We state the following useful fact about adjunctions from Theorem~2 and Corollaries~1 and 2 of \cite{mac_lane_categories_1978}. 

\begin{theorem}\label{adjunction-conditions}
Let $F \colon \mathcal{X} \to \mathcal{Y}$ be a functor, and suppose that for each $C \in \mathcal{Y}$, there is some $W(C) \in \mathcal{X}$ and $\eta_C \colon C \to F(W(C))$ in $\mathcal{Y}$ such that the assignment $g \mapsto F(g) \circ \eta_C$ is a bijection $\text{\normalfont Hom}_\mathcal{X}(W(C),R) \to \text{\normalfont Hom}_\mathcal{Y}(C,F(R))$. Then, $W$ extends to a functor $\mathcal{Y} \to \mathcal{X}$ which is left adjoint to $F$. The unit of this adjunction is given by $\eta_C$.

In particular, for a morphism $f \colon C \to C'$ in $\mathcal{Y}$, $W(f)$ is defined to be the unique morphism $g \colon W(C) \to W(C')$ such that $F(g) \circ \eta_C = \eta_{C'} \circ f$.
\end{theorem}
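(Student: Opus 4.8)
The statement is the standard characterisation of a left adjoint by means of universal arrows, so the plan is to manufacture a functor and an adjunction out of the pointwise universal data $(W(C), W_C)$. The hypothesis says precisely that each $W_C \colon C \to F(W(C))$ is a \emph{universal arrow} from $C$ to $F$: every morphism $C \to F(R)$ factors uniquely through $W_C$ as $F(g) \circ W_C$. I would proceed in three stages: first use this universal property to define $W$ on morphisms, then verify functoriality, and finally check that the given family of bijections is natural in both variables, which simultaneously yields the adjunction and identifies the unit.

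First I would define $W$ on a morphism $f \colon C \to C'$ of $\mathcal{Y}$. Applying the bijection hypothesis with $R = W(C')$ to the morphism $W_{C'} \circ f \in \operatorname{Hom}_{\mathcal{Y}}(C, F(W(C')))$, there is a unique $g \in \operatorname{Hom}_{\mathcal{X}}(W(C), W(C'))$ with $F(g) \circ W_C = W_{C'} \circ f$; set $W(f) := g$. This is exactly the morphism described in the ``in particular'' clause, so the construction of $W$ is forced. I would then check functoriality using uniqueness. For identities, the equation $F(\id_{W(C)}) \circ W_C = W_C = W_C \circ \id_C$ exhibits $\id_{W(C)}$ as a solution of the defining equation for $W(\id_C)$, so uniqueness gives $W(\id_C) = \id_{W(C)}$. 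For composition, given $f \colon C \to C'$ and $f' \colon C' \to C''$ I would compute
\[
F(W(f') \circ W(f)) \circ W_C = F(W(f')) \circ F(W(f)) \circ W_C = F(W(f')) \circ W_{C'} \circ f = W_{C''} \circ (f' \circ f),
\]
using functoriality of $F$ together with the defining equations of $W(f)$ and $W(f')$ in turn; uniqueness then forces $W(f' \circ f) = W(f') \circ W(f)$.

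Finally I would show that the bijections $\Phi_{C,R} \colon g \mapsto F(g) \circ W_C$ assemble into a natural isomorphism $\operatorname{Hom}_{\mathcal{X}}(W(-), -) \cong \operatorname{Hom}_{\mathcal{Y}}(-, F(-))$. Naturality in the $\mathcal{X}$-variable is immediate from functoriality of $F$: for $h \colon R \to R'$ one has $\Phi_{C,R'}(h \circ g) = F(h) \circ F(g) \circ W_C = F(h) \circ \Phi_{C,R}(g)$. Naturality in the $\mathcal{Y}$-variable (contravariant in $C$) is the step that genuinely invokes the definition of $W$ on morphisms, and it is where I expect the only real bookkeeping to lie: for $f \colon C' \to C$ one computes $\Phi_{C',R}(g \circ W(f)) = F(g) \circ F(W(f)) \circ W_{C'} = F(g) \circ W_C \circ f = \Phi_{C,R}(g) \circ f$, where the middle equality is precisely the defining equation $F(W(f)) \circ W_{C'} = W_C \circ f$ for $W(f) \colon W(C') \to W(C)$.

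Since each $\Phi_{C,R}$ is a bijection by hypothesis, this natural isomorphism of hom-functors is exactly the data of an adjunction $W \dashv F$. The unit at $C$ is recovered by evaluating $\Phi_{C, W(C)}$ at $\id_{W(C)}$, which gives $F(\id_{W(C)}) \circ W_C = W_C$, as claimed. There is no deep obstacle here; the content is soft and categorical, and the only care required is in tracking variances so that the defining equation for $W(f)$ matches the naturality square in the $\mathcal{Y}$-variable.
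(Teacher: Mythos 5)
Your proof is correct: it is the standard universal-arrow argument (define $W$ on morphisms by the uniqueness clause of the bijection, get functoriality from uniqueness, and check naturality in both variables), and the paper gives no proof of its own for this statement, instead citing Theorem~2 and its corollaries in Mac~Lane, where exactly this argument appears. There is nothing to correct or add.
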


This fact will allow us to construct the left adjoint using only the data of its object map and unit. This fact is also the method of proof for the differential Weil descent in \cite{differential_weil_descent}.

For the convenience of the reader, we now briefly explain the situation in the classical setup. Let $b_1, \ldots, b_r$ be an $A$-basis of $B$. For each $i=1, \ldots, r$, let $\lambda_i \colon B \to A$ be the $A$-module homomorphism with $\lambda_i \left( \sum_{j=1}^r a_j b_j \right) = a_i$. If $R$ is an $A$-algebra, we consider the base change of $\lambda_i$ to $R$---the $R$-module homomorphism $\id_R \otimes \lambda_i \colon R \otimes_A B \to R$. Note that $\id_R \otimes \lambda_i$ simply picks out the coefficient of the basis element $1 \otimes b_i$. We will write $\lambda_i$ for $\id_R \otimes \lambda_i$ throughout, but it will be clear from context which we mean.

Now let $T$ be a set of indeterminates, and define 
\[
W(B[T]) = A[T]^{\otimes r} = A[T] \otimes_A A[T] \otimes_A \ldots \otimes_A A[T]
\]
For each $i$ and $t \in T$, let $t(i) = 1 \otimes \ldots \otimes 1 \otimes t \otimes 1 \otimes \ldots \otimes 1$, where the $t$ occurs in the $i$th position. We also let $\eta_{B[T]}$ be the unique $B$-algebra homomorphism
\begin{align*}
    \eta_{B[T]} \colon B[T] & \to F(W(B[T])) \\
    t & \mapsto \sum_{i=1}^r t(i) \otimes b_i
\end{align*}

These choices make the following map $\tau(B[T],R)$ a bijection for each $A$-algebra $R$:
\begin{align*}
\text{Hom}_{\Alg{A}}(A[T]^{\otimes r}, R) & \to \text{Hom}_{\Alg{B}}(B[T], R \otimes_A B) \\
\phi & \mapsto F(\phi) \circ \eta_{B[T]}
\end{align*}
where the compositional inverse is defined as follows. For $\psi \colon B[T] \to R \otimes_A B$ a $B$-algebra homomorphism, let $\phi$ be the unique $A$-algebra homomorphism with $\phi(t(i)) = \lambda_i(\psi(t))$.

Now let $C$ be a $B$-algebra, and take a surjective $B$-algebra homomorphism $\pi_C \colon B[T] \to C$ for some set of indeterminates $T$. Let $I_C$ be the ideal of $W(B[T])$ generated by all the $\lambda_i ( \eta_{B[T]}(f))$ where $f$ ranges over $\ker \pi_C$. Now define $W(C) = W(B[T]) / I_C$ and $W(\pi_C) \colon W(B[T]) \to W(C)$ as the residue map.

Then we induce a map $\tau(C,R) \colon \text{Hom}_{\Alg{A}}(W(C), R) \to \text{Hom}_{\Alg{B}}(C, F(R))$ which makes the following diagram commute:
\[
\begin{tikzcd}
{\text{Hom}_{\Alg{A}}(W(C),R)} \arrow[dd, "\_ \circ W(\pi_C)"] \arrow[rr, "{\tau(C,R)}"] &  & {\text{Hom}_{\Alg{B}}(C,F(R))} \arrow[dd, "\_ \circ \pi_C"] \\
                                                                     &  &                                         \\
{\text{Hom}_{\Alg{A}}(W(B[T]),R)} \arrow[rr, "{\tau(B[T],R)}"]                           &  & {\text{Hom}_{\Alg{B}}(B[T],F(R))}                          
\end{tikzcd}
\]

Let $\eta_C = \tau(C,W(C))(\id_{W(C)})$, and note that
\[
\eta_C(\pi_C(t)) = \sum_{i=1}^r W(\pi_C)(t(i)) \otimes b_i
\]

From this we see that $\tau(C,R)(\phi) = F(\phi) \circ \eta_C$ and that $\tau(C,R)$ is a bijection, satisfying the conditions of Theorem~\ref{adjunction-conditions}. Then, $W$ is a functor which is left adjoint to $F$ with unit $\eta_C$. This $W$ is the classical Weil descent functor.

\section{$\mathcal{D}$-rings and $\D$-algebra}

In this section we review the basic definitions of $\D$-rings as well as introduce some algebraic notions which we will need later. See \cite{ozlem_2018}, \cite{moosa_scanlon_2010}, and \cite{moosa_scanlon_2013} for more details.

Fix a base field $k$, and let $\D$ be a finite-dimensional $k$-algebra. For the remainder of this paper, we impose the following assumption:

\begin{assumption}\label{D-assumptions}
Since $\D$ is a finite-dimensional $k$-algebra, we may decompose it as a finite product of local, finite-dimensional $k$-algebras (see \cite[Theorem 8.7]{atiyah_introduction_2018}), say $\D = B_1 \times \ldots \times B_t$. We assume that the residue field of each $B_i$ is actually $k$.
\end{assumption}

This is the same assumption imposed by the authors in \cite{moosa_scanlon_2013}.

For any $k$-algebra $R$, we define $\mathcal{D}(R) = R \otimes_k \D$ to be the base change of $\D$ to $R$. Note that $\D(R)$ remains free and finite as an $R$-module. We will often identify a $k$-basis of $\D$ with the corresponding $R$-basis of $\D(R)$. By a slight abuse of notation, we think of $\D$ also as a functor $\Alg{k} \to \Alg{k}$, where for a $k$-algebra homomorphism $\phi \colon R \to S$, $\D(\phi) = \phi \otimes \id_\D$.

\begin{definition}
A $k$-algebra $R$ equipped with a map $e \colon R \to \D(R)$ is a $\D$-ring if $e$ is a $k$-algebra homomorphism. In this case, we call $e$ the $\D$-structure or the $\D$-ring structure on $R$.
\end{definition}

\begin{example}\label{D-ring-examples}
\begin{enumerate}
    \item Take $\D$ to be the algebra of dual numbers, $k[\varepsilon] / (\varepsilon^2)$, with the standard $k$-algebra structure. If $(R,e)$ is a $\D$-ring, let $\sigma$ and $\delta$ be such that $e(r) = \sigma(r) + \delta(r) \varepsilon$. Then, $\sigma$ is a $k$-linear endomorphism of $R$, and $\delta$ is a $k$-linear derivation on $R$ which is twisted by $\sigma$. Indeed, the $k$-linearity of $e$ implies $k$-linearity of $\sigma$ and $\delta$, and multiplicativity implies that 
    \[
    \sigma(rs) + \delta(rs) \varepsilon = \sigma(r)\sigma(s) + (\sigma(r)\delta(s) + \delta(r) \sigma(s)) \varepsilon
    \]
    Note that if a $\D$-ring has $\sigma = \id_R$, then it is a differential $k$-algebra.
    \item Take $\D = k^l$ with the product $k$-algebra structure. If $(R,e)$ is a $\D$-ring, let $e(r) = \sum_i \sigma_i(r) \varepsilon_i$ where $\varepsilon_i$ is the standard basis of $\D$. Then, $\D$-rings are precisely rings with $l$ (not necessarily commuting) $k$-endomorphisms $\sigma_1, \ldots, \sigma_l$.
\end{enumerate}
We refer the reader to \cite{moosa_scanlon_2013} for more examples.
\end{example}

\begin{definition}\label{associated-endomorphisms}
Let $(R,e)$ be a $\D$-ring. Recall Assumption~\ref{D-assumptions} that $\D = B_1 \times \ldots \times B_t$ and $B_i$ is local with residue field $k$. Let $\pi_i \colon \D \to B_i \to k$ be the composition of the projection onto the $i$th coordinate and the residue map. These $k$-algebra homomorphisms lift to $\pi^R_i \colon \D(R) \to R$. Define $\sigma_i = \pi_i^R \circ e$ for $i=1, \ldots, t$. Then, $\sigma_1, \ldots, \sigma_t$ are called the associated endomorphisms of the $\D$-ring $(R,e)$.
\end{definition}

\begin{remark}
If $\D = k^l$ as in Example~\ref{D-ring-examples}(2) above, then the associated endomorphisms of a $\D$-ring $(R,e)$ are just the endomorphisms $\sigma_1, \ldots, \sigma_l$: the coordinate functions of the homomorphism $e$ with respect to the standard basis of $k^l$.
\end{remark}

\begin{remark}
In \cite{moosa_scanlon_2013} the authors impose the condition that the $\D$-ring structure $e$ must be a section to the projection map $\pi_1^R \colon \D(R) \to R$. This forces $\sigma_1$ to be the identity. Under this condition, every $\D$-ring in Example~\ref{D-ring-examples}(1) has $\sigma = \id_R$ and hence may be thought of as a differential $k$-algebra. We do not impose this condition in this paper, but we will show that our results also work in this context in Section~6.1.
\end{remark}

We now specify the morphisms of the categories we are working in. These were defined in Section~3.1 of \cite{moosa_scanlon_ghs}.

\begin{definition}\label{D-maps-def}
If $(R,e)$ and $(S,f)$ are two $\D$-rings, then $\phi \colon (R,e) \to (S,f)$ is a $\D$-homomorphism if it is a $k$-algebra homomorphism and the following diagram commutes:
\[
\begin{tikzcd}
\mathcal{D}(R) \arrow[r, "\mathcal{D}(\phi)"] & \mathcal{D}(S)   \\
R \arrow[r, "\phi"] \arrow[u, "e"]            & S \arrow[u, "s"]
\end{tikzcd}
\]

If $S$ is an $R$-algebra, then we will call $(S,f)$ an $(R,e)$-algebra if the structure map $R \to S$ is a $\D$-homomorphism. If $(S,f)$ and $(T,g)$ are both $(R,e)$-algebras and $\phi \colon S \to T$ is a map between them, then we say that $\phi$ is a $(R,e)$-algebra homomorphism if it is an $R$-algebra homomorphism and a $\D$-homomorphism.
\end{definition}

\begin{remark}
Note that in the context of Example~\ref{D-ring-examples}(1) above, where $\sigma$ is the identity map, a map being a $\D$-homomorphism is equivalent to it being a differential ring homomorphism. In the context of Example~\ref{D-ring-examples}(2), being a $\D$-homomorphism is equivalent to being a difference ring homomorphism for each endomorphism.
\end{remark}

From now on we will denote by $\Alg{(R,e)}$ the category of $(R,e)$-algebras with $(R,e)$-algebra homomorphisms.

\subsection{The tensor product of $\D$-structures}

We now need the correct notion of base change in the context of $\D$-algebras. That is, given a $\D$-ring $(R,e)$ and an $(R,e)$-algebra $(T,g)$, for any $(R,e)$-algebra $(S,f)$ we need a $\D$-ring structure on $S \otimes_R T$ that makes $S \otimes_R T$ into a $(T,g)$-algebra. In \cite{ozlem_2018}, it is proved that there exists a unique $\D$-structure, $f \otimes g$ (called $(\tilde{f}, \tilde{g})$ in \cite{ozlem_2018}), on $S \otimes_R T$ which makes the natural maps $\phi_S \colon S \to S \otimes_R T$ and $\phi_T \colon T \to S \otimes_R T$ into $\D$-homomorphisms. We recall the definition of this structure:

\[
\begin{tikzcd}
                                                   &  & \mathcal{D}(S \otimes_R T)                                                               &  &                                                    \\
                                                   &  &                                                                                          &  &                                                    \\
\mathcal{D}(S) \arrow[rruu, "\mathcal{D}(\phi_S)"] &  &                                                                                          &  & \mathcal{D}(T) \arrow[lluu, "\mathcal{D}(\phi_T)", swap] \\
                                                   &  & S \otimes_R T \arrow[uuu, "f \otimes g"]                                                 &  &                                                    \\
                                                   &  & \mathcal{D}(R) \arrow[lluu, "\mathcal{D}(\iota_S)"] \arrow[rruu, "\mathcal{D}(\iota_T)", swap] &  &                                                    \\
S \arrow[rruu, "\phi_S"] \arrow[uuu, "f"]          &  &                                                                                          &  & T \arrow[lluu, "\phi_T", swap] \arrow[uuu, "g", swap]          \\
                                                   &  &                                                                                          &  &                                                    \\
                                                   &  & R \arrow[lluu, "\iota_S"] \arrow[uuu, "e"] \arrow[rruu, "\iota_T", swap]                       &  &                                                   
\end{tikzcd}
\]

Explicitly, 
\[
(f \otimes g)(s \otimes t) = \left( \D(\phi_S) \circ f(s) \right) \cdot \left( \D(\phi_T) \circ g(t) \right)
\]
where $\cdot$ is the product in $\D(S \otimes_R T)$.

A short computation shows that this agrees with the correct notions of derivations on tensor products: $(\delta \otimes d)(s \otimes t) = \delta(s) \otimes t + s \otimes d(t)$ (see \cite[pg 21]{buium94}), and endomorphisms on tensor products: $(\sigma \otimes \tau) (s \otimes t) = \sigma(s) \otimes \tau(t)$.

In addition, if we have an $(R,e)$-algebra homomorphism $\theta \colon S \to U$, then the map $\theta \otimes \id_T \colon S \otimes_R T \to U \otimes_R T$ is a $(T,g)$-algebra homomorphism. Indeed, we see this from the following diagram:
\[
\begin{tikzcd}
                                             & \mathcal{D}(U) \arrow[rr] &                                                                                    & \mathcal{D}(U \otimes_R T)      &                                      \\
\mathcal{D}(S) \arrow[ru] \arrow[rr]         &                           & \mathcal{D}(S \otimes_R T) \arrow[ru, dashed]                                     &                                  & \mathcal{D}(T) \arrow[ll] \arrow[lu] \\
                                             & U \arrow[uu] \arrow[rr]   &                                                                                    & U \otimes_R T \arrow[uu, dashed] &                                      \\
S \arrow[ru, "\theta"] \arrow[uu] \arrow[rr] &                           & S \otimes_R T \arrow[ru, "\theta \otimes \text{id}_T"', dashed] \arrow[uu, dashed] &                                  & T \arrow[lu] \arrow[ll] \arrow[uu]  
\end{tikzcd}
\]

$\theta \otimes \id_T$ is clearly a $T$-algebra homomorphism, so it remains to show the dashed square commutes. Every other face in this diagram commutes by the result above, and since every element of $S \otimes_R T$ is a sum of elements of the form $s \otimes t$, the dashed face also commutes. So we have proved the following:

\begin{corollary}\label{D-base-change}
Let $(R,e)$ be a $\D$-ring, and $(T,g)$ an $(R,e)$-algebra. Define $F^\D \colon \Alg{(R,e)} \to \Alg{(T,g)}$ by $F^\D(S,f) = (S \otimes_R T, f \otimes g)$ on objects, and $F^\D(\theta) = \theta \otimes \id_T$ on morphisms. Then, $F^\D$ is a functor which is just the classical base change functor on the underlying algebras.
\end{corollary}

We finish this section with the following lemma which will be used in Section~6.1. It describes the associated endomorphisms of the $\D$-structure on tensor products.

\begin{lemma}\label{lemma-ass-ends-of-tensor-prod}
Let $(R,e)$ be a $\D$-ring and $(S,f), (T,g) \in \Alg{(R,e)}$. If $(S,f)$ has associated endomorphisms $\sigma_i$ and $(T,g)$ has associated endomorphisms $\tau_i$, then $(S \otimes_R T, f \otimes g)$ has associated endomorphisms $\sigma_i \otimes \tau_i$.
\end{lemma}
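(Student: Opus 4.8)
The plan is to verify the asserted identity on simple tensors, using the explicit formula for the tensor product $\D$-structure recorded above together with a naturality property of the projections $\pi_i^{(-)}$. The one observation that does all the work is that the $\pi_i$ are natural. For any $k$-algebra $M$, the map $\pi_i^M \colon \D(M) \to M$ is the base change $\id_M \otimes \pi_i$ of the fixed $k$-algebra homomorphism $\pi_i \colon \D \to k$, so in particular each $\pi_i^M$ is itself a $k$-algebra homomorphism. Moreover, for any $k$-algebra homomorphism $\psi \colon M \to N$ one checks directly that
\[
\pi_i^N \circ \D(\psi) = (\id_N \otimes \pi_i) \circ (\psi \otimes \id_\D) = \psi \circ \pi_i^M,
\]
so $\pi_i$ is a natural transformation from the functor $\D$ to the identity functor.

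Before computing I would record that $\sigma_i \otimes \tau_i$ is a well-defined endomorphism of $S \otimes_R T$. Writing $\iota_S \colon R \to S$ and $\iota_T \colon R \to T$ for the structure maps, the fact that these are $\D$-homomorphisms gives $f \circ \iota_S = \D(\iota_S) \circ e$, and applying $\pi_i^{(-)}$ together with the naturality identity yields $\sigma_i \circ \iota_S = \iota_S \circ (\pi_i^R \circ e)$, and symmetrically for $\iota_T$. Thus $\sigma_i$ and $\tau_i$ restrict to one and the same associated endomorphism of $(R,e)$ on the image of $R$, so the assignment $s \otimes t \mapsto \sigma_i(s) \otimes \tau_i(t)$ respects the defining relations of the tensor product over $R$.

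The main computation is then to evaluate $\pi_i^{S \otimes_R T} \circ (f \otimes g)$ on a simple tensor $s \otimes t$, recalling that by Definition~\ref{associated-endomorphisms} this composite is exactly the associated endomorphism we must identify, while $\sigma_i = \pi_i^S \circ f$ and $\tau_i = \pi_i^T \circ g$. Using the explicit formula
\[
(f \otimes g)(s \otimes t) = \bigl( \D(\phi_S) \circ f(s) \bigr) \cdot \bigl( \D(\phi_T) \circ g(t) \bigr)
\]
and the fact that $\pi_i^{S \otimes_R T}$ is a $k$-algebra homomorphism, I would push $\pi_i^{S \otimes_R T}$ through the product and apply naturality to each factor. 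For the first factor this gives
\[
\pi_i^{S \otimes_R T}\bigl( \D(\phi_S) \circ f(s) \bigr) = \phi_S\bigl( \pi_i^S(f(s)) \bigr) = \phi_S(\sigma_i(s)) = \sigma_i(s) \otimes 1,
\]
and symmetrically the second factor yields $1 \otimes \tau_i(t)$. Multiplying these in $S \otimes_R T$ produces $\sigma_i(s) \otimes \tau_i(t) = (\sigma_i \otimes \tau_i)(s \otimes t)$. Since both maps are additive and agree on the simple tensors, which generate $S \otimes_R T$ additively, they coincide.

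I expect no substantial obstacle here: the computation is a direct substitution once the two preliminary points are secured. The only genuinely delicate bookkeeping is confirming the naturality identity for $\pi_i^{(-)}$ and the well-definedness of $\sigma_i \otimes \tau_i$ over $R$, since it is precisely the latter that makes the target expression meaningful and that allows $\pi_i^{S \otimes_R T}$ to be distributed across the product coming from the $\phi_S$ and $\phi_T$ factors.
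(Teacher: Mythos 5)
Your proposal is correct and follows essentially the same route as the paper: the paper's proof is exactly the computation of $\pi_i^{S \otimes_R T} \circ (f \otimes g)$ on a simple tensor, distributing the projection across the product and using naturality of $\pi_i^{(-)}$ to reduce each factor to $\sigma_i(s) \otimes 1$ and $1 \otimes \tau_i(t)$. Your additional remarks on the well-definedness of $\sigma_i \otimes \tau_i$ and the naturality identity are details the paper leaves implicit.
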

\begin{proof}
Using the notation of Definition~\ref{associated-endomorphisms}, we have
\begin{align*}
    \pi_i^{S \otimes_R T} \circ (f \otimes g)(s \otimes t) &= \pi_i^{S \otimes_R T}(\D(\phi_S) \circ f(s)) \cdot \pi_i^{S \otimes_R T}(\D(\phi_T) \circ g(t)) \\
    &= (\pi_i^S \circ f(s) \otimes 1) \cdot (1 \otimes \pi_i^T \circ g(t)) \\
    &= \sigma_i(s) \otimes \tau_i(t)
\end{align*}
\end{proof}

\section{The matrix associated to a free and finite $\D$-ring}
In this section we establish some technical results that will be needed to construct a left adjoint to $F^\D$ in Section~5. We carry forward the notation from the previous section. In particular, $k$ is a field, $\D$ is a finite-dimensional $k$-algebra, and Assumption~\ref{D-assumptions} still holds.

Recall from the example in the introduction that, in general, the difference base change functor had no left adjoint. There, the nonexistence of the left adjoint is due to the fact that the matrix associated to the endomorphism, 

\[
\mleft[
\begin{array}{c c}
    \lambda_1(f(1)) & \lambda_1(f(\varepsilon)) \\
    \lambda_2(f(1)) & \lambda_2(f(\varepsilon))
\end{array}
\mright]
=
\mleft[
\begin{array}{c c}
    1 & 0 \\
    0 & 0
\end{array}
\mright],
\]
is not invertible.

We will show in Section~5 that if the associated matrix is invertible, then we can construct a left adjoint to $F^\D$. The next subsection investigates conditions under which the associated matrix is invertible.

\bigskip

\subsection{The matrix associated to an endomorphism}

As before, let $A$ be a ring and $B$ an $A$-algebra which is finite and free as an $A$-module. We fix a ring endomorphism $\sigma \colon B \to B$ with $\sigma(A) \subseteq A$.

\begin{definition}
For an $A$-basis $b = (b_1, \ldots, b_r)$ of $B$, let $M^\sigma_b$ be the following matrix associated to $\sigma$:
\[
M^\sigma_b = 
\mleft[
\begin{array}{c c c c}
  \lambda_1(\sigma(b_1)) & \lambda_1(\sigma(b_2)) & \cdots & \lambda_1(\sigma(b_r)) \\
  \lambda_2(\sigma(b_1)) & \lambda_2(\sigma(b_2)) & \cdots & \lambda_2(\sigma(b_r)) \\
  \vdots & & \ddots & \\
  \lambda_r(\sigma(b_1)) & \lambda_r(\sigma(b_2)) & \cdots & \lambda_r(\sigma(b_r))
\end{array}
\mright]
\]
where $\lambda_i$ is the $i$th coordinate projection $B \to A$ with respect to the basis $b$. Note that the maps $\lambda_i$ are dependent on the basis $b_1, \ldots, b_r$ and hence will change if the basis changes.
\end{definition}

We will say that $\sigma$ has invertible matrix with respect to the basis $b = (b_1, \ldots, b_r )$ if $M^\sigma_b$ is invertible in $\text{Mat}_{r \times r}(A)$.

\begin{proposition}\label{inv-matrix-equivalences}
The following are equivalent:
\begin{enumerate}
    \item[\normalfont (i)] $\sigma$ has invertible matrix with respect to some $A$-basis of $B$.
    \item[\normalfont (ii)] $\sigma$ has invertible matrix with respect to every $A$-basis of $B$.
    \item[\normalfont (iii)] If $b_1, \ldots, b_r$ is an $A$-basis of $B$, then $\sigma(b_1), \ldots, \sigma(b_r)$ is also an $A$-basis of $B$.
    \item[\normalfont (iv)] $\text{span}_A(\sigma(B)) = B$.
\end{enumerate}
\end{proposition}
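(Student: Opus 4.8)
The plan is to prove the four conditions equivalent by establishing a cycle of implications, most of which amount to unwinding what the matrix $M^\sigma_b$ encodes. The key observation is that $M^\sigma_b$ is exactly the matrix of the $A$-linear map $\sigma \colon B \to B$ with respect to the basis $b = (b_1, \ldots, b_r)$: its $j$th column records the coordinates $\lambda_1(\sigma(b_j)), \ldots, \lambda_r(\sigma(b_j))$ of $\sigma(b_j)$ in the basis $b$, so that $\sigma(b_j) = \sum_{i=1}^r \lambda_i(\sigma(b_j)) \, b_i$. Since $B$ is free of finite rank over $A$, the theory of matrices over the commutative ring $A$ applies, and invertibility of $M^\sigma_b$ in $\mathrm{Mat}_{r\times r}(A)$ is equivalent to $M^\sigma_b$ representing an $A$-module automorphism of $B$.

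First I would prove (i) $\Leftrightarrow$ (iii) for a fixed basis, and more precisely show that for any given basis $b$, the matrix $M^\sigma_b$ is invertible if and only if $\sigma(b_1), \ldots, \sigma(b_r)$ is again an $A$-basis. The forward direction is immediate: if $M^\sigma_b$ is invertible, then the $A$-linear endomorphism of $B$ it represents is an automorphism, so it carries the basis $b$ to the basis $\sigma(b_1), \ldots, \sigma(b_r)$. Conversely, if the $\sigma(b_j)$ form a basis, then the change-of-basis matrix from $b$ to $(\sigma(b_j))_j$ is invertible, and this matrix is precisely $M^\sigma_b$. This simultaneously gives (ii) $\Rightarrow$ (i) (trivially) and, together with the basis-independence built into the statement, the implication (iii) $\Rightarrow$ (ii): if $\sigma$ sends one basis to a basis, then since $\sigma$ is $A$-linear it sends every basis to a spanning set of the right cardinality, hence to a basis, so $M^\sigma_{b'}$ is invertible for every basis $b'$.

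Next I would close the loop through (iv). The implication (iii) $\Rightarrow$ (iv) is clear, since if the $\sigma(b_i)$ form a basis then in particular they span $B$, and they lie in $\sigma(B)$, so $\mathrm{span}_A(\sigma(B)) = B$. For (iv) $\Rightarrow$ (iii), the point is that $\mathrm{span}_A(\sigma(B)) = \mathrm{span}_A(\sigma(b_1), \ldots, \sigma(b_r))$ because $\sigma$ is $A$-linear (every element of $\sigma(B)$ is an $A$-combination of the $\sigma(b_i)$, using $\sigma(A) \subseteq A$); so (iv) says the $r$ elements $\sigma(b_1), \ldots, \sigma(b_r)$ span the rank-$r$ free module $B$. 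A spanning set of $r$ elements in a free module of rank $r$ over a commutative ring must be a basis — this is the nontrivial algebraic input, and it is where I expect the only real subtlety to lie.

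The main obstacle, then, is justifying that step cleanly over a general commutative ring $A$ rather than a field. Over a field it is elementary linear algebra, but over an arbitrary commutative ring one needs a genuine theorem: a surjective endomorphism of a finitely generated module is injective (a consequence of the Cayley–Hamilton / determinant-trick argument, cf. the standard corollary that a surjective endomorphism of a Noetherian or, more generally, finitely generated module is an isomorphism). Concretely, the $A$-linear map $A^r \to B \cong A^r$ sending the standard basis to $(\sigma(b_i))_i$ is surjective by (iv), hence is an isomorphism, hence its matrix — which is $M^\sigma_b$ — is invertible, giving (iv) $\Rightarrow$ (i) directly and recovering (iii). I would invoke this surjective-endomorphism result as the key lemma and otherwise let the argument run through the matrix identification described above.
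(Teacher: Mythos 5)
Your proposal is essentially correct and follows the same overall route as the paper: the key point is that $M^\sigma_b$ is precisely the change-of-basis matrix from $b_1,\ldots,b_r$ to $\sigma(b_1),\ldots,\sigma(b_r)$, which gives the fixed-basis equivalence of (i) and (iii), and the only nontrivial algebraic input is that an $r$-element spanning set of a free rank-$r$ module over a commutative ring is a basis. For that last point you invoke the surjective-endomorphism theorem; the paper instead writes $\sigma(b_i)=\sum_j x_{ji}b_j$ and $b_i=\sum_j y_{ji}\sigma(b_j)$, deduces $XY=I$, and takes determinants. Both are valid; the paper's is the more elementary packaging of the same determinant trick.

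One assertion you should not let stand: $\sigma$ is \emph{not} $A$-linear in general. It is additive with $\sigma(ab)=\sigma(a)\sigma(b)$ and $\sigma(A)\subseteq A$, i.e.\ only $\sigma|_A$-semilinear (the paper has to introduce the twisted algebra $B^\sigma$ in Lemma~\ref{aut-iff-inv-matrix} for exactly this reason). In most places your parentheticals supply the correct substitute: $\sigma\left(\sum_i a_ib_i\right)=\sum_i\sigma(a_i)\sigma(b_i)$ with $\sigma(a_i)\in A$, which is all that is needed for $\text{span}_A(\sigma(B))=\text{span}_A(\sigma(b_1),\ldots,\sigma(b_r))$. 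But in the one step where $A$-linearity does real work --- ``if $\sigma$ sends one basis to a basis, then it sends every basis to a basis'' --- the argument as written fails: for another basis $\beta_i=\sum_j x_{ji}b_j$ one gets $\sigma(\beta_i)=\sum_j\sigma(x_{ji})\sigma(b_j)$, not $\sum_j x_{ji}\sigma(b_j)$, so one must additionally observe that $\sigma(X)$ is invertible (since $\sigma|_A$ is a unital ring endomorphism, $\sigma(X)\sigma(X^{-1})=\sigma(XX^{-1})=I$). This is exactly what the paper's computation $M^\sigma_\beta=X^{-1}M^\sigma_b\,\sigma(X)$ in the proof of (i) $\Rightarrow$ (ii) accomplishes. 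That said, your proof survives without this step: the cycle (ii) $\Rightarrow$ (i) $\Rightarrow$ (iv) $\Rightarrow$ (iii) $\Rightarrow$ (ii) closes using only the fixed-basis equivalence, the span identity, and the spanning-set lemma, so the mislabelled step is redundant rather than fatal --- but the claim that $\sigma$ is $A$-linear should be excised.
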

\begin{proof}
(ii) $\Rightarrow$ (i) and (iii) $\Rightarrow$ (iv) are obvious.

For (ii) $\Leftrightarrow$ (iii), note that $M^\sigma_b$ is just the change of basis matrix between the two bases $b_1, \ldots, b_r$ and $\sigma(b_1), \ldots, \sigma(b_r)$.

For (i) $\Rightarrow$ (ii), say $\sigma$ has invertible matrix with respect to $b_1, \ldots, b_r$, and let $\beta_1, \ldots, \beta_r$ be some other basis. Let $X$ be the change of basis matrix from the $b$ to the $\beta$, that is, $\beta_i = \sum_j x_{ji} b_j$, and $Y = X^{-1}$, and let $\mu_i$ be the $A$-module homomorphisms with $\mu_i(\sum_j a_j \beta_j) = a_i$. Then, 
\begin{align*}
    \sigma(\beta_i) &= \sum_j \sigma(x_{ji}) \sigma(b_j) \\
    &= \sum_j \sum_k \sigma(x_{ji}) \lambda_k(\sigma(b_j)) b_k \\
    &= \sum_j \sum_k \sum_n \sigma(x_{ji}) \lambda_k(\sigma(b_j)) y_{nk} \beta_n
\end{align*}
and so $\mu_n(\sigma(\beta_i)) = \sum_j \sum_k \sigma(x_{ji}) \lambda_k(\sigma(b_j)) y_{nk}$, that is, $M^\sigma_\beta = Y M^\sigma_b \sigma(X)$. Now since $X$ is invertible, $\sigma(X)$ is invertible in $\text{Mat}_{r \times r}(A)$. So $M^\sigma_\beta$ is invertible.

For (iv) $\Rightarrow$ (iii), assume $b_1, \ldots, b_r$ is an $A$-basis of $B$. Any $b \in B$ has $b = \sum_i a_i \sigma(\beta_i)$ for some $\beta_i \in B$. Also, $\beta_i = \sum_j \alpha_{ij} b_j$ since the $b_i$ are a basis, and so $b = \sum_i \sum_j a_i \sigma(\alpha_{ij}) \sigma(b_j)$. Then, $\sigma(b_1), \ldots, \sigma(b_r)$ spans $B$ over $A$. Now write $X$ for the matrix where $\sigma(b_i) = \sum_j x_{ji} b_j$, and $Y$ for the matrix where $b_i = \sum_j y_{ji} \sigma(b_j)$. Then, since $b_1, \ldots, b_r$ is a basis, we have that $XY = I$, and so by taking determinants, we see that $X$ and $Y$ are invertible in $\text{Mat}_{r \times r}(A)$. Then, $\sigma(b_1), \ldots, \sigma(b_r)$ is an $A$-basis of $B$.
\end{proof}

\begin{definition}
As a result of this proposition, having invertible matrix is independent of the choice of $A$-basis of $B$. We will say that $\sigma$ has invertible matrix if any of the above conditions hold.
\end{definition}

The following lemmas explain the connection between the endomorphism $\sigma$ having invertible matrix and being an automorphism.

\begin{lemma}\label{aut-iff-inv-matrix}
If $\sigma|_A \colon A \to A$ is an automorphism, then $\sigma$ is an automorphism on $B$ if and only if $\sigma$ has invertible matrix.
\end{lemma}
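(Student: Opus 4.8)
The plan is to prove the two implications separately, leaning on the equivalent formulations of ``$\sigma$ has invertible matrix'' furnished by Proposition~\ref{inv-matrix-equivalences}. The key observation is that the forward implication does not use the hypothesis on $\sigma|_A$ at all, whereas the reverse implication relies on it crucially in order to transport coefficient equations in $B$ back through $\sigma$.

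For the forward direction, suppose $\sigma$ is an automorphism of $B$. Then $\sigma$ is in particular surjective, so $\sigma(B) = B$ and hence $\text{span}_A(\sigma(B)) = B$. This is precisely condition (iv) of Proposition~\ref{inv-matrix-equivalences}, so $\sigma$ has invertible matrix. I expect this direction to be essentially immediate.

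For the reverse direction, suppose $\sigma$ has invertible matrix and fix an $A$-basis $b_1, \ldots, b_r$ of $B$. By condition (iii) of Proposition~\ref{inv-matrix-equivalences}, the images $\sigma(b_1), \ldots, \sigma(b_r)$ form an $A$-basis of $B$ as well, and I would use this to establish surjectivity and injectivity in turn. For surjectivity, given $b \in B$ I would write $b = \sum_i a_i \sigma(b_i)$ in the new basis; since $\sigma|_A$ is surjective, each $a_i = \sigma(a_i')$ for some $a_i' \in A$, and the multiplicativity of $\sigma$ together with $\sigma(a_i') \in A$ yields $b = \sigma\!\left(\sum_i a_i' b_i\right)$. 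For injectivity, if $\sigma\!\left(\sum_i a_i b_i\right) = 0$ then $\sum_i \sigma(a_i)\sigma(b_i) = 0$, and since the $\sigma(b_i)$ are an $A$-basis each $\sigma(a_i) = 0$; injectivity of $\sigma|_A$ then forces every $a_i = 0$, so $\sum_i a_i b_i = 0$.

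The only place that requires any care is the reverse direction, where both the surjectivity and the injectivity of $\sigma|_A$ are genuinely needed---surjectivity to lift the coefficients $a_i$ and injectivity to conclude that they vanish. Everything else reduces to the bookkeeping already packaged into Proposition~\ref{inv-matrix-equivalences}, so I anticipate no real obstacle beyond keeping track of which property of $\sigma|_A$ is invoked where.
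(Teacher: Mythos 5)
Your proof is correct. It takes a more elementary route than the paper's: you split the reverse implication into surjectivity and injectivity and argue element-by-element using conditions (iii) and (iv) of Proposition~\ref{inv-matrix-equivalences}, whereas the paper packages the whole statement into a single linear-algebra fact. Concretely, the paper introduces the twisted algebra $B^\sigma$ (same ring, $A$-algebra structure $a \mapsto \sigma(a)$), observes that $\sigma \colon B \to B^\sigma$ is then genuinely $A$-linear with matrix $\sigma|_A^{-1}(M^\sigma_b)$, and concludes both directions at once from ``a linear map between free modules of the same finite rank is an isomorphism iff its matrix is invertible'' (using that $\sigma|_A^{-1}$ induces an automorphism of $\text{Mat}_{r\times r}(A)$). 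Your version buys transparency about exactly where each half of the hypothesis on $\sigma|_A$ enters---surjectivity to lift coefficients, injectivity to kill them---and your remark that the forward direction needs no hypothesis on $\sigma|_A$ at all is accurate (the paper recovers this only by combining the lemma with the subsequent one showing that surjectivity of $\sigma$ forces surjectivity of $\sigma|_A$). The paper's version is shorter and makes the semilinear structure explicit, which is the conceptual reason the matrix $M^\sigma_b$ controls everything. Both arguments rest on the same underlying bookkeeping from Proposition~\ref{inv-matrix-equivalences}.
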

\begin{proof}
Define $B^\sigma$ to be the $A$-algebra with underlying ring $B$, but $A$-algebra structure map $a \mapsto \sigma(a)$. Since $\sigma|_A$ is an automorphism, $B^\sigma$ is a finite and free $A$-algebra of the same rank as $B$: in fact, if $b_1, \ldots, b_r$ is a basis of $B$, then it is also a basis of $B^\sigma$. Now the map $f \colon B \to B^\sigma$ given by $f(b) = \sigma(b)$ is actually $A$-linear, with
\begin{align*}
    f(b_i) &= \sum_j \lambda_j(\sigma(b_i)) b_j \\
    &= \sum_j \sigma ( \sigma|_A^{-1} \lambda_j (\sigma(b_i))) b_j
\end{align*}
and so the matrix of the $A$-linear map $f$ is $\sigma|_A^{-1}(M^\sigma_b)$. Then, $f$ is an isomorphism if and only if $\sigma|_A^{-1}(M^\sigma_b)$ is invertible, if and only if $\sigma$ has invertible matrix.
\end{proof}

\begin{lemma}
If $\sigma$ is an automorphism on $B$, then $\sigma|_A$ is an automorphism on $A$.
\end{lemma}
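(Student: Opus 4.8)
The plan is to treat injectivity and surjectivity of $\sigma|_A \colon A \to A$ separately. Injectivity is immediate: $\sigma$ is injective on $B$ and $A \subseteq B$, so its restriction is injective. Everything therefore reduces to surjectivity, i.e. to showing that the subring $A' \vcentcolon= \sigma(A)$ of $A$ is in fact all of $A$. Observe that $\sigma \colon A \to A'$ is then a ring isomorphism, so in particular $A \cong A'$; the content is to see that the inclusion $A' \subseteq A$ is an equality. (If $A = 0$ the statement is trivial, so assume $A \neq 0$, whence $B \neq 0$ and its rank $r$ over $A$ satisfies $r \geq 1$.)

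First I would transport the free structure of $B$ along $\sigma$. If $b_1, \ldots, b_r$ is an $A$-basis of $B$, then since $\sigma$ is bijective and satisfies $\sigma(ab) = \sigma(a)\sigma(b)$ with $\sigma(a) \in A'$, every element of $B$ can be written uniquely as $\sum_i a'_i \sigma(b_i)$ with $a'_i \in A'$, the uniqueness coming from injectivity of $\sigma$ together with the fact that $b_1, \ldots, b_r$ is an $A$-basis. Hence $\sigma(b_1), \ldots, \sigma(b_r)$ is an $A'$-basis of $B$, so $B$ is free of rank $r$ over $A'$ as well as over $A$. (Since $\sigma$ is onto we also have $\mathrm{span}_A(\sigma(B)) = B$, so Proposition~\ref{inv-matrix-equivalences} already records that $\sigma$ has invertible matrix over $A$.)

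The heart of the proof is a rank comparison in the tower $A' \subseteq A \subseteq B$, for which I first show that $A = A \cdot 1_B$ is a direct summand of $B$ as an $A$-module. The key point is that $1_B$ is unimodular: reducing modulo any maximal ideal $\mathfrak{m}$ of $A$, its image is the unit of the nonzero $(A/\mathfrak{m})$-algebra $B/\mathfrak{m}B$ (nonzero as this is free of rank $r \geq 1$), hence nonzero, so the coordinates of $1_B$ in the basis $b_1, \ldots, b_r$ generate the unit ideal. Thus the inclusion $A \hookrightarrow B$ splits as $A$-modules, $B = A \oplus P$. As $A' \subseteq A$, this is also a splitting of $A'$-modules, so $A$ is a direct summand of the free $A'$-module $B$ and is therefore finitely generated and projective over $A'$.

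Finally I would count ranks. Since $A$ is finitely generated projective over $A'$ of some rank $s$ and $B$ is free of rank $r$ over $A$, the module $B$ is projective of rank $rs$ over $A'$; but $B$ is free of rank $r$ over $A'$, so $rs = r$ and hence $s = 1$. Thus $A$ is a rank-one projective $A'$-module containing $A'$ with $1_A = 1_{A'}$, and localizing at an arbitrary prime $\mathfrak{p}$ of $A'$ makes $A_\mathfrak{p}$ free of rank one over $A'_\mathfrak{p}$, say on a generator $u$; writing $1 = cu$ and $u^2 = du$ and using freeness forces $cd = 1$, so $c$ is a unit and $u = c^{-1} \in A'_\mathfrak{p}$, giving $A_\mathfrak{p} = A'_\mathfrak{p}$. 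As this holds at every prime, the $A'$-module $A/A'$ vanishes, so $A = A'$ and $\sigma|_A$ is onto. I expect the main obstacle to be exactly the non-locality of $A$: were $A$ a field the argument would collapse to $r = \dim_{A'} B = [A:A'] \cdot \dim_A B = [A:A'] \cdot r$, forcing $[A:A'] = 1$, but for a general ring one cannot assume $1_B$ extends to a free basis, and the care taken above with unimodularity and with finitely generated projective modules (where ranks are well defined and multiply in towers) is what replaces the naive dimension count.
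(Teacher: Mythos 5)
Your proof is correct, but it takes a genuinely different and much heavier route than the paper. The paper's argument stays entirely at the level of the fixed basis: surjectivity of $\sigma$ on $B$ makes $\sigma(b_1),\ldots,\sigma(b_r)$ an $A$-spanning set of size $r$, hence an $A$-basis by the determinant trick already used in Proposition~\ref{inv-matrix-equivalences}; then for $a\in A$ one picks $b$ with $\sigma(b)=a\,\sigma(b_1)$, expands $b=\sum a_ib_i$, and compares coefficients in the basis $\{\sigma(b_i)\}$ to read off $a=\sigma(a_1)$. That is the whole proof. You instead run a rank-multiplicativity argument in the tower $\sigma(A)\subseteq A\subseteq B$: you make $B$ free over $A'=\sigma(A)$, split off $A$ as an $A'$-direct summand of $B$ via unimodularity of $1_B$, conclude $A$ is finitely generated projective over $A'$, and force its local ranks to be $1$ by comparing $r$ with $rs$, finishing with a localization computation to turn ``rank one containing $1$'' into equality. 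All the steps check out (in particular the idempotent-style computation $u=cdu$ at each prime is fine, and nothing secretly presupposes surjectivity of $\sigma|_A$), and your approach has the virtue of making transparent why the statement is the ring-theoretic analogue of $[B:A']=[B:A][A:A']$; but it imports projective modules, rank functions, and local-global machinery where a two-line coefficient comparison in the basis $\{\sigma(b_i)\}$ suffices. If you want to salvage the conceptual framing with minimal machinery, note that once you know $\{\sigma(b_i)\}$ is an $A$-basis (which you essentially prove en route), the paper's coefficient comparison is exactly your rank count collapsed to a single equation.
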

\begin{proof}
It is enough to show that $\sigma|_A$ is surjective onto $A$. Note that since $\sigma$ is surjective onto $B$, the $A$-linear span of $\{ \sigma(b_1), \ldots, \sigma(b_r) \}$ is $B$, and by a similar argument to the proof of (iv) $\Rightarrow$ (iii) in Proposition~\ref{inv-matrix-equivalences}, it must be an $A$-basis. Now, let $a \in A$. Then there is a $b \in B$ such that $a \sigma(b_1) = \sigma(b)$. Writing $b = \sum_{i=1}^r a_i b_i$ for some $a_i \in A$, we get $a \sigma(b_1) = \sum_{i=1}^r \sigma(a_i) \sigma(b_i)$. Since $\{ \sigma(b_1), \ldots, \sigma(b_r) \}$ is an $A$-basis, we get that $a = \sigma(a_1)$, and hence $\sigma|_A$ is surjective onto $A$.
\end{proof}

As a result, we see that if $\sigma$ is an automorphism of $B$, then it has invertible matrix. It turns out the converse is not true, as we point out in the following example.

\begin{example}
Let $A = \R(x_1, x_2, \ldots)$, $B = \C(x_1, x_2, \ldots)$, with basis $b_1 = 1, b_2 = i$, $\sigma|_\C = \id_\C$, and $\sigma(x_i) = x_{i+1}$. Note that $A$ and $B$ are fields and that $\sigma$ and $\sigma|_A$ are not surjective. However, the associated matrix is 
\[
M^\sigma_b = 
\renewcommand\arraystretch{1.3}
\mleft[
\begin{array}{c c}
  \lambda_1(\sigma(b_1))    & \lambda_1(\sigma(b_2)) \\
  \lambda_2(\sigma(b_1))    & \lambda_2(\sigma(b_2))
\end{array}
\mright]
=
\renewcommand\arraystretch{1.3}
\mleft[
\begin{array}{c c}
  1 & 0 \\
  0 & 1
\end{array}
\mright]
\]
which is invertible.
\end{example}

On the other hand, one can have an injective endomorphism $\sigma$ that does not have invertible matrix.

\begin{example}
Let $K$ be a field, $A = K[x]$ and $B = A[\varepsilon]/(\varepsilon^2)$ with $\sigma(p(x)+q(x) \varepsilon) = p(x)+xq(x) \varepsilon$. Then with respect to the basis $b = \{1, \varepsilon\}$, we have
\[
M^\sigma_b = 
\renewcommand\arraystretch{1.3}
\mleft[
\begin{array}{c c}
  \lambda_1(\sigma(b_1))    & \lambda_1(\sigma(b_2)) \\
  \lambda_2(\sigma(b_1))    & \lambda_2(\sigma(b_2))
\end{array}
\mright]
=
\renewcommand\arraystretch{1.3}
\mleft[
\begin{array}{c c}
  1 & 0 \\
  0 & x
\end{array}
\mright]
\]
which is not invertible in $\text{Mat}_{2 \times 2} (K[x])$.
\end{example}

\bigskip

\subsection{The matrix associated to a $\D$-ring}
We now extend the ideas of the previous subsection to the more general case of $\D$-rings. Just as we can associate a matrix to an endormophism of $B$, we can associate a matrix to a $\D$-ring structure on $B$ which, when invertible, will allow us to construct a left adjoint to $F^\D$ in Section~5. Here, we analyse this matrix and the conditions on its invertibility.

Let $(A,e)$ be a $\D$-ring and let $(B,f)$ be an $(A,e)$-algebra, where $B$ is a finite and free $A$-algebra. For any $k$-basis $\varepsilon_1, \ldots, \varepsilon_{l}$ of $\D$ and any $A$-basis $b_1, \ldots, b_r$ of $B$, consider the following $rl \times rl$ matrix with entries in $A$:
\[
M = 
\renewcommand\arraystretch{1.3}
\mleft[
\begin{array}{c c c c}
  M_{11}    & M_{12}    & \cdots & M_{1,l} \\
  M_{21}    & M_{22}    & \cdots & M_{2,l} \\
  \vdots    &           & \ddots &           \\
  M_{l,1} & M_{l,2} & \cdots & M_{l,l}
\end{array}
\mright]
\]
where $M_{mj}$ is the $r \times r$ matrix given by
\[
(M_{mj})_{ni} = \sum_{k=1}^{l} a_{jkm} \lambda_n(f_k(b_i))
\]

Recall that $\lambda_n \colon B \to A$ is the coordinate $b_n$-projection. The elements $a_{jkm} \in k$ are defined by $\varepsilon_j \varepsilon_k = \sum_{m=1}^{l} a_{jkm} \varepsilon_m$, and $f_k \colon B \to B$ is the coordinate of $f$ with respect to $\varepsilon_k$ given by $f(r) = \sum_{k=1}^{l} f_k(r) \varepsilon_k$. We call $M$ the \emph{matrix associated to $(B,f)$}.

We will now briefly explain where this matrix comes from and why we need to consider its invertibility. Define the functor $\D^e \colon \Alg{A} \to \Alg{A}$ by setting $\D^e(R)$ to be the ring $\D(R)$ but with $A$-algebra structure given by the composition of $e \colon A \to \D(A)$ with the natural map $\D(A) \to \D(R)$. See Notation~3.9 of \cite{moosa_scanlon_2010} and the discussion after Remark~2.10 of \cite{ozlem_2018}. On morphisms, $\D^e(\alpha) = \D(\alpha)$. We define $\D^f \colon \Alg{B} \to \Alg{B}$ similarly. Suppose $u \colon R \to \D(R)$ is a $\D$-ring structure on the $A$-algebra $R$. Then $(R,u)$ is an $(A,e)$-algebra if and only if $u$ is an $A$-algebra homomorphism $R \to \D^e(R)$.

We now define a natural transformation $\mu \colon F \D^e \to \D^f F$ in the following way: for any $A$-algebra $R$, we have a natural $A$-algebra homomorphism $\D^e(R) \to \D^f(R \otimes_A B)$ and an $A$-algebra homomorphism $B \to \D^f(R \otimes_A B)$ coming from the composition of $f$ with the natural map. Since $\D^e(R) \otimes_A B$ is the coproduct of $A$-algebras, we get an $A$-algebra homomorphism $\mu_R \colon \D^e(R) \otimes_A B \to \D^f(R \otimes_A B)$, which is also a $B$-algebra homomorphism. It is clear from its construction that $\mu$ is natural in $R$.

\begin{lemma}\label{lemma-M-is-matrix-of-mu}
The component of $\mu$ at $R$, $\mu_R \colon \D^e(R) \otimes_A B \to \D^f(R \otimes_A B)$, is an $R$-linear map of free $R$-modules with the natural $R$-module structure. With respect to the $R$-bases $\{ \varepsilon_n \otimes b_m \}$ of $\D^e(R) \otimes_A B$ and $\{ 1 \otimes b_n \varepsilon_m \}$ of $\D^f(R \otimes_A B)$, the matrix representation of $\mu_R$ is $M$. In particular, $\mu$ is a natural isomorphism if and only if $M$ is invertible. 
\end{lemma}
\begin{proof}
That $\mu_R$ is $R$-linear is clear from construction. The explicit formula for $\mu_R$ is given by
\begin{align*}
    \sum_{i=1}^r \left( \sum_{j=1}^l r_{ij} \varepsilon_j \right) \otimes b_i &\mapsto \sum_{i=1}^r \left( \sum_{j=1}^l r_{ij} \otimes 1 \ \varepsilon_j \right) \cdot \left( \sum_{k=1}^l 1 \otimes f_k(b_i) \varepsilon_k \right) \\
    &= \sum_{i=1}^r \sum_{j=1}^l \sum_{k=1}^l \sum_{m=1}^l a_{jkm} r_{ij} \otimes f_k(b_i) \ \varepsilon_m \\
    &= \sum_{i=1}^r \sum_{j=1}^l \sum_{k=1}^l \sum_{m=1}^l \sum_{n=1}^r  a_{jkm} \lambda_n(f_k(b_i)) r_{ij} \otimes b_n \ \varepsilon_m 
\end{align*}
which immediately shows that $M$ is the matrix of $\mu_R$ with respect to the aforementioned bases.
\end{proof}

From the lemma, we see that if $M$ is invertible, we have a natural transformation $W \D^f \to W \D^f FW \to W F \D^e W \to \D^e W$ coming from the composition of $\mu^{-1}$ with the unit and counit of the classical adjunction. If $g \colon C \to \D^f(C)$ is a $B$-algebra homomorphism, then composing the above natural transformation with the morphism $W(g) \colon W(C) \to W \D^f (C)$ gives an $A$-algebra homomorphism $g^W \colon W(C) \to \D^e W(C)$. In the next section, we will see that this $\D$-structure on $W(C)$ yields the left adjoint of $F^\D$. For now, we study the invertibility of $M$.

Note that $M$ depends on the choice of the $k$-basis of $\D$ and the $A$-basis of $B$. The following result shows us that invertibility of $M$ is actually independent of the $k$-basis of $\D$. After the proof of Theorem~\ref{M-inv-equiv-inv-sigma}, we will see that invertibility of $M$ is also independent of the $A$-basis of $B$.

\begin{proposition}\label{M-inv-independence}
Suppose we have two bases $\varepsilon = \{ \varepsilon_1, \ldots, \varepsilon_{l} \}$ and $\omega = \{ \omega_1, \ldots, \omega_{l} \}$ of $\D$, with $X$ the change of basis matrix from the $\varepsilon$ to the $\omega$, that is, $\omega_i = \sum_{j=1}^{l} x_{ji} \varepsilon_j$. Let $\tilde{X}$ be the $rl \times rl$ matrix obtained from $X$ by replacing each entry $x$ by the $r \times r$ block $xI$, where $I$ is the $r \times r$ identity matrix. Write $M^\varepsilon$ for the matrix $M$ corresponding to the basis $\varepsilon$ and similarly for $M^\omega$. Then,
\[
M^\omega = \tilde{X}^{-1} M^\varepsilon \tilde{X}
\]
\end{proposition}
\begin{proof}
Let $a_{ijk}$ be the product coefficients of the $\varepsilon$ and $\alpha_{ijk}$ for the $\omega$. Also, write $f^\varepsilon_i$ for the $i$th operator with respect to the basis $\varepsilon$ and similarly for $f^\omega_i$. We can obtain a relation between these by noting that the homomorphism $f \colon B \to \D(B)$ they induce must be the same, that is:
\[
\sum_{i=1}^{l} f_i^\varepsilon(b) \ \varepsilon_i = \sum_{i=1}^{l} f_i^\omega(b) \ \omega_i \text{ for all $b \in B$}
\]

To ease notation, let $\tilde{Y} = \tilde{X}^{-1}$. Let $N = \tilde{Y} M^\varepsilon \tilde{X}$. Then, the $mj$ block of $N$ is
\begin{align*}
N_{mj} &= \sum_p \sum_q \tilde{Y}_{mp} M^\varepsilon_{pq} \tilde{X}_{qj} \\
&= \sum_p \sum_q y_{mp} M^\varepsilon_{pq} x_{qj}
\end{align*}

Then the $ni$ element of $N_{mj}$ is 
\begin{align*}
    (N_{mj})_{ni} &= \sum_p \sum_q y_{mp} x_{qj} (M^\varepsilon_{pq})_{ni} \\
    &= \sum_p \sum_q \sum_k y_{mp} x_{qj} a_{qkp} \lambda_n(f^\varepsilon_k(b_i)) \\
    &= \sum_p \sum_q \sum_k y_{mp} x_{qj} a_{qkp} \lambda_n \left(\sum_r x_{kr} f^\omega_r(b_i) \right) \\
    &= \sum_p \sum_q \sum_k \sum_r x_{kr} y_{mp} x_{qj} a_{qkp} \lambda_n(f^\omega_r(b_i)) \\
    &= \sum_r \left( \sum_p \sum_q \sum_k x_{kr} y_{mp} x_{qj} a_{qkp} \right) \lambda_n(f^\omega_r(b_i))
\end{align*}

We now claim that $\alpha_{jrm} = \sum_{p,q,k} x_{kr} y_{mp} x_{qj} a_{qkp}$. Indeed, we have
\begin{align*}
    \omega_j \omega_r &= \left( \sum_q x_{qj} \varepsilon_q \right) \left( \sum_k x_{kr} \varepsilon_k \right) \\
    &= \sum_{q,k,p} x_{qj} x_{kr} a_{qkp} \varepsilon_p \\
    &= \sum_{q,k,p,u} x_{qj} x_{kr} a_{qkp} y_{up} \omega_u \\
\end{align*}
Then the claim follows.

It then follows that
\begin{align*}
    (N_{mj})_{ni} &= \sum_r \alpha_{jrm} \lambda_n(f^\omega_r(b_i)) \\
    &= \sum_k \alpha_{jkm} \lambda_n(f^\omega_k(b_i)) \\
    &= (M^\omega_{mj})_{ni}
\end{align*}
and hence $M^\omega = \tilde{Y} M^\varepsilon \tilde{X}$. 
\end{proof}

This proposition tells us that invertibility of $M$ is independent of which $k$-basis of $\D$ we choose. We now construct a ``suitable'' basis of $\D$ in order to characterise invertibility of $M$ in Theorem~\ref{M-inv-equiv-inv-sigma} below. This basis is constructed as follows. Write $\D = B_1 \times \ldots \times B_t$ where each $B_i$ is a local, finite-dimensional $k$-algebra with residue field $k$ (see Assumption~\ref{D-assumptions}). Let $\mathfrak{m}_i$ be the unique maximal ideal of $B_i$. Nakayama's Lemma tells us that $\mathfrak{m}_i$ is nilpotent: say $d_i$ is minimal such that $\mathfrak{m}_i^{d_i+1} = 0$. It then follows that for each $B_i$ we can find a $k$-basis $\mathcal{B}_i = \bigcup_{j = 0}^{d_i} \mathcal{B}_i^j$ where $\mathcal{B}_i^j$ is a $k$-basis of $\mathfrak{m}_i^j$ modulo $\mathfrak{m}_i^{j+1}$. Note that since the residue field of $B_i$ is $k$, we may choose $\mathcal{B}_i^0 = \{1\}$. Embed these bases inside $\D$ in the usual way, that is, if $x \in \mathcal{B}_i$, send $x$ to the element of $\D$ with $x$ in the $i$th position and zeros elsewhere. Then, the union of these forms a basis $\mathcal{B}$ of $\D$. Order $\mathcal{B} = \bigcup_{i=1}^t \bigcup_{j = 0}^{d_i} \mathcal{B}_i^j$ lexicographically on $i$ and $j$. The ordering of each $\mathcal{B}_i^j$ does not matter. We will write the elements of $\mathcal{B}$ as $\varepsilon_1, \ldots, \varepsilon_l$. Let $a_{jkm}$ be the product coefficients of $\mathcal{B}$; that is, $\varepsilon_j \varepsilon_k = \sum_{m=1}^l a_{jkm} \varepsilon_m$.

By the construction of the basis, we know that $\varepsilon_j \varepsilon_k = 0$ whenever $\varepsilon_j$ and $\varepsilon_k$ come from different $\mathcal{B}_i$. If they come from the same $\mathcal{B}_i$, then they can be expressed as a linear combination of $\mathcal{B}_i$, and so if $\varepsilon_m$ does not come from $\mathcal{B}_i$, it will not appear in this linear combination. So we see that $a_{jkm} = 0$ unless $\varepsilon_j$, $\varepsilon_k$, and $\varepsilon_m$ all come from the same $\mathcal{B}_i$.

Furthermore, if $\varepsilon_j \in \mathcal{B}_i^n$ and $\varepsilon_k \in \mathcal{B}_i^p$, then $\varepsilon_j \varepsilon_k \in \text{span}(\bigcup_{q=n+p}^{d_i} \mathcal{B}_i^q$). Hence, if $\varepsilon_m \in \mathcal{B}_i^q$ for $q < n+p$, $a_{jkm} = 0$. From these facts we can deduce the values of $a_{jkm}$ in specific cases:
\begin{enumerate}
    \item $m < j$ and $p>0$: $a_{jkm} = 0$.
    
    Since $m<j$, $q \leq n$, and hence $q < n+p$. By the above, $a_{jkm} = 0$.
    
    \item $m < j$ and $p=0$: $a_{jkm} = 0$.
    
    Since $p=0$, $\varepsilon_k$ is the $1$ in $B_i$. Then, $\varepsilon_j \varepsilon_k = \varepsilon_j \not = \varepsilon_m$.
    
    \item $m=j$ and $p>0$: $a_{jkm} = 0$.
    
    Again, as $m=j$, $q  = n$ and so $q < n+p$.
    
    \item $m=j$ and $p=0$: $a_{jkm} = 1$.
    
    $\varepsilon_j \varepsilon_k = \varepsilon_j = \varepsilon_m$. So $a_{jkm} = 1$.
\end{enumerate}

\bigskip

Now, recall the definition of the matrix $M$:
\[
M = 
\renewcommand\arraystretch{1.3}
\mleft[
\begin{array}{c c c c}
  M_{11}    & M_{12}    & \cdots & M_{1,l} \\
  M_{21}    & M_{22}    & \cdots & M_{2,l} \\
  \vdots    &           & \ddots &           \\
  M_{l,1} & M_{l,2} & \cdots & M_{l,l}
\end{array}
\mright]
\]
where
\[
(M_{mj})_{ni} = \sum_{k=1}^{l} a_{jkm} \lambda_n(f_k(b_i))
\]

With respect to the chosen basis, $\mathcal{B} = \{\varepsilon_1, \ldots, \varepsilon_l\}$, we now investigate the shape of each block $M_{mj}$ for $m \leq j$. Consider first the case when $m < j$. As pointed out above, if $\varepsilon_j$ and $\varepsilon_m$ belong to different $\mathcal{B}_i$, then $a_{jkm} = 0$ for all $k$. Otherwise, we are in cases (1) or (2) above, and hence $a_{jkm} = 0$ for all $k$. Hence, the block $M_{mj}$ is $0$.

Now consider the case $m = j$, that is, the block $M_{jj}$. Again, if $\varepsilon_j$ and $\varepsilon_m$ belong to different $B_i$, then $a_{jkj} = 0$ for all $k$. If they belong to the same $B_i$, then case (3) tells us that $a_{jkj} = 0$ when $p>0$, and (4) tells us that $a_{jkj} = 1$ when $p=0$. In conclusion, $(M_{jj})_{ni} = \lambda_n(f_k(b_i))$ where $k$ is such that $\varepsilon_k \in \mathcal{B}_r^0$ and $\varepsilon_j \in B_r$.

From Definition~\ref{associated-endomorphisms} we see that the $i$th projection map $\pi_i$ is just the map that projects onto the coefficient of $\varepsilon_k$ where $\varepsilon_k \in \mathcal{B}_i^0$. Hence, the $i$th associated endomorphism of $(B,f)$, denoted $\sigma_i$, is just $f_k$. Note that $\sigma_i$ has this form because of the chosen basis of $\D$.

So in all, $M$ is a block lower triangular matrix whose diagonal $r \times r$ blocks $M_{jj}$ are of the form
\[
M^{\sigma_i}_b = 
\mleft[
\begin{array}{c c c c}
  \lambda_1(\sigma_i(b_1)) & \lambda_1(\sigma_i(b_2)) & \cdots & \lambda_1(\sigma_i(b_r)) \\
  \lambda_2(\sigma_i(b_1)) & \lambda_2(\sigma_i(b_2)) & \cdots & \lambda_2(\sigma_i(b_r)) \\
  \vdots & & \ddots & \\
  \lambda_r(\sigma_i(b_1)) & \lambda_r(\sigma_i(b_2)) & \cdots & \lambda_r(\sigma_i(b_r))
\end{array}
\mright]
\]
for some $i = 1, \ldots, t$.

Note that $M^{\sigma_i}_b$ is the matrix associated to the endomorphism $\sigma_i$ as in Section~4.1. Hence, we have proved the following important result:

\begin{theorem}\label{M-inv-equiv-inv-sigma}
$M$ is invertible if and only if each associated endomorphism of $(B,f)$ has invertible matrix (in the sense of Section~4.1).
\end{theorem}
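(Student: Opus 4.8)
The plan is to read the result off directly from the structural description of $M$ obtained above with respect to the suitable basis $\mathcal{B}$. The first step is to reduce to that basis: by Proposition~\ref{M-inv-independence} we have $M^\eta = \tilde{X}^{-1} M^\varepsilon \tilde{X}$ for any two bases, and since $\tilde{X}$ is invertible in $\text{Mat}_{rl \times rl}(A)$, the matrices $M^\varepsilon$ and $M^\eta$ are conjugate; in particular, one is invertible if and only if the other is. Hence it suffices to check the equivalence for the basis $\mathcal{B}$, for which the shape of $M$ has already been computed.

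With respect to $\mathcal{B}$ we established that $M_{mj} = 0$ whenever $m < j$, so $M$ is block lower triangular, and that each diagonal block $M_{jj}$ coincides with the matrix $M^{\sigma_i}_b$ associated to the $i$th associated endomorphism, where $i$ indexes the local component $B_i$ containing $\varepsilon_j$. Since the determinant of a block triangular matrix over a commutative ring is the product of the determinants of its diagonal blocks, I would compute
\[
\det M = \prod_{j=1}^{l} \det(M_{jj}) = \prod_{i=1}^{t} \det\big(M^{\sigma_i}_b\big)^{\dim_k B_i},
\]
the exponent recording that the block $M^{\sigma_i}_b$ occurs once for each of the $\dim_k B_i$ basis elements of $\mathcal{B}$ drawn from $B_i$.

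To finish, I would invoke two elementary facts: a square matrix over $A$ is invertible exactly when its determinant is a unit of $A$, and, in a commutative ring, a finite product (hence also a positive power) is a unit if and only if each factor is a unit. Together these give that $M$ is invertible iff each $\det(M^{\sigma_i}_b)$ is a unit, iff each $M^{\sigma_i}_b$ is invertible, which by definition---together with Proposition~\ref{inv-matrix-equivalences}, ensuring basis-independence---means exactly that each associated endomorphism $\sigma_i$ has invertible matrix. There is no substantial obstacle left at this stage, as the combinatorial core (the vanishing of the structure constants $a_{jkm}$ that forces the block triangular form) precedes the theorem; the only point demanding care is to confirm that every $\sigma_i$ for $i = 1, \ldots, t$ genuinely appears among the diagonal blocks, so that invertibility of all diagonal blocks really is equivalent to invertibility of each $M^{\sigma_i}_b$. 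This holds because $\mathcal{B}_i^0 = \{1\}$ contributes at least one basis element from each component $B_i$.
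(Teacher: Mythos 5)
Your proposal is correct and follows essentially the same route as the paper: the theorem is read off from the block lower triangular form of $M$ with respect to the suitable basis $\mathcal{B}$, whose diagonal blocks are the matrices $M^{\sigma_i}_b$, with Proposition~\ref{M-inv-independence} and Proposition~\ref{inv-matrix-equivalences} handling basis-independence. You merely make explicit the final determinant computation (and the observation that every $\sigma_i$ occurs at least once on the diagonal) that the paper leaves implicit.
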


\begin{remark}\label{remark-independent-of-bases}
Combining this theorem with Propositions~\ref{inv-matrix-equivalences} and~\ref{M-inv-independence}, we see that invertibility of $M$ is independent of the choice of bases of $\D$ and $B$.
\end{remark}

\section{Weil Descent for $\D$-algebras}
In this section we prove the main theorem: Theorem~\ref{main-theorem} below. As before, we let $(A,e)$ be a $\D$-ring, $(B,f)$ an $(A,e)$-algebra where $B$ is a finite and free $A$-algebra.

The proofs in this section make use of the natural transformation $\mu \colon F \D^e \to \D^f F$ defined in the previous section whose invertibility is equivalent to the invertibility of the matrix $M$---the matrix associated to $(B,f)$---by Lemma~\ref{lemma-M-is-matrix-of-mu}. Furthermore, recall that in Theorem~\ref{M-inv-equiv-inv-sigma} we proved that $M$ is invertible if and only if the associated endomorphisms of $(B,f)$ have invertible matrix. For the remainder of this section, in addition to Assumption~\ref{D-assumptions}, we make the following assumption:

\begin{assumption}\label{ass=inv-matrix}
The associated endomorphisms of $(B,f)$ all have invertible matrix. Equivalently, $\mu$ is a natural isomorphism.
\end{assumption}

The following is part of the content of our main theorem.

\begin{theorem}
The $\D$-base change functor, $F^\D$, has a left adjoint $W^\D$. More precisely, for a $(B,f)$-algebra $(C,g)$, there exists a unique $\D$-structure $g^W$ on $W(C)$ that makes the unit of the classical adjunction, $\eta_C$, into a $\D$-homomorphism. $W^\D$ has the form $W^\D(C,g) = (W(C), g^W)$.
\end{theorem}

Before proving this result, we fix some notation. Since $W \dashv F$, we have the natural transformations given by the unit, $\eta \colon \id_{\Alg{B}} \to FW$, and the counit, $\varepsilon \colon WF \to \id_{\Alg{A}}$. We do not need to refer to the $k$-basis of $\D$ in this section, so we will use $\varepsilon$ to denote the counit. We will often identify a functor with the identity natural transformation on that functor. Recall the functors $\D^e \colon \Alg{A} \to \Alg{A}$ and $\D^f \colon \Alg{B} \to \Alg{B}$ defined in the previous section where $\D^e(R)$ is the ring $\D(R)$ but with $A$-algebra structure given by the composition of $e \colon A \to \D(A)$ with the natural map $\D(A) \to \D(R)$, and on morphisms, $\D^e(\alpha) = \D(\alpha)$. Recall also that a $\D$-ring structure on $R$ making it into an $(A,e)$-algebra is equivalent to an $A$-algebra homomorphism $R \to \D^e(R)$.

\begin{remark}\label{unique-str-on-tensor-is-F-comp-phi}
Suppose $(R,u)$ is an $(A,e)$-algebra so that $u \colon R \to \D^e(R)$ is an $A$-algebra homomorphism. Then $\mu_R \circ F(u) \colon F(R) \to \D^f F(R)$ is the $\D$-ring structure on $F(R)$ corresponding to $u \otimes f$ from the $\D$-base change functor in Section~3.2.
\end{remark}

We now use the natural isomorphism $\mu$ to define a suitable $\D$-ring structure on $W(C)$. For ease of notation we first define the natural transformation 
\begin{equation*}
    \zeta \colon W\D^f \to \D^e W
\end{equation*}
by the composition
\begin{equation*}\tag{$\ddagger$}
\begin{tikzcd}
    W \D^f \arrow[r, "W \D^f\eta"] & W \D^f FW \arrow[r, "W\mu^{-1} W"] & WF \D^e W \arrow[r, "\varepsilon \D^e W"] & \D^e W
\end{tikzcd}
\end{equation*}

Now suppose $(C,g)$ is a $(B,f)$-algebra, so that $g$ corresponds to the $B$-algebra homomorphism $g \colon C \to \D^f(C)$. Let $g^W = \zeta_C \circ W(g) \colon W(C) \to \D^e W(C)$. Then $(W(C), g^W)$ is an $(A,e)$-algebra. We define the functor $W^\D$ on objects as $W^\D(C,g) = (W(C),g^W)$ and on morphisms as $W^\D(\alpha) = W(\alpha)$. Since both $W$ and $\zeta$ are natural, it is clear that if $\alpha$ is a $\D$-homomorphism, then $W(\alpha)$ is a $\D$-homomorphism, so that $W^\D$ is actually a functor. We now need to show that $W^\D$ is left adjoint to $F^\D$ by showing that the natural bijection coming from the classical adjunction
\begin{align*}
     \text{\normalfont Hom}_{\Alg{A}}(W(C), R) & \to \text{\normalfont Hom}_{\Alg{B}}(C, F(R)) \\
    \phi & \mapsto F(\phi) \circ \eta_C \\
    \varepsilon_R \circ W(\psi) & \mapsfrom \psi
\end{align*}
restricts to a natural bijection 
\begin{align*}
     \text{\normalfont Hom}_{\Alg{(A,e)}}(W^\D(C,g), (R,u)) & \to \text{\normalfont Hom}_{\Alg{(B,f)}}((C,g), F^\D(R,u)) \\
    \phi & \mapsto F^\D(\phi) \circ \eta_C \\
    \varepsilon_R \circ W^\D(\psi) & \mapsfrom \psi
\end{align*}

We will do this by showing that both $\eta_C$ and $\varepsilon_R$ are $\D$-homomorphisms with the appropriate $\D$-structures defined above. Consider the following diagram of natural transformations:

\[
\begin{tikzcd}
FW \D^f \arrow[r, "FW \D^f\eta"] & FW \D^f FW \arrow[r, "FW\mu^{-1} W"] & FWF \D^e W \arrow[r, "F \varepsilon \D^e W"] & F \D^e W \arrow[r, "\mu W"] & \D^f FW\\
\D^f \arrow[u, "\eta \D^f"] \arrow[r, "\D^f\eta"] &\D^f FW \arrow[u, "\eta \D^f FW"] \arrow[r, "\mu^{-1} W"] & F \D^e W \arrow[u, "\eta F \D^e W"] \arrow[ru, equal]
\end{tikzcd}
\]

The squares commute due to naturality of $\eta$, and the equality is due to the adjunction axiom: $F \varepsilon \circ \eta F = F$. The composition along the top row is $\mu W \circ F\zeta$. By naturality of $\eta$, we get
\[
\begin{tikzcd}
    FW(C) \arrow[r, "FW(g)"] & FW \D^f(C) \\
    C \arrow[u, "\eta_C"] \arrow[r, "g"] & \D^f(C) \arrow[u, "\eta_{\D^f(C)}"]
\end{tikzcd}
\]
and putting these together we get
\[
\begin{tikzcd}
\D^f(C) \arrow[rr, "\D^f(\eta_C)"] & & \D^f FW(C) \\
C \arrow[rr, "\eta_C"] \arrow[u, "g"] & & FW(C) \arrow[u, "\mu_{W(C)} \circ F(g^W)", swap]
\end{tikzcd}
\]
so that $\eta_C$ is a $\D$-homomorphism by Remark~\ref{unique-str-on-tensor-is-F-comp-phi}.

Suppose now that $g'$ is a $\D$-ring structure $W(C) \to \D^e W(C)$ making $\eta_C$ into a $\D$-homomorphism, so that the following diagram of $B$-algebras commutes:
\begin{equation*}
\begin{tikzcd}
\D^f(C) \arrow[rr, "\D^f(\eta_C)"] & & \D^f FW(C) \\
C \arrow[rr, "\eta_C"] \arrow[u, "g"] & & FW(C) \arrow[u, "\mu_{W(C)} \circ F(g')", swap]
\end{tikzcd}
\end{equation*}

Since $\mu$ is an isomorphism, this is equivalent to the following diagram of $B$-algebras commuting:

\begin{equation}\tag{$*$}
\begin{tikzcd}
\D^f(C) \arrow[rr, "\mu^{-1}_{W(C)} \circ \D^f(\eta_C)"] & & F \D^e W(C) \\
C \arrow[rr, "\eta_C"] \arrow[u, "g"] & & FW(C) \arrow[u, "F(g')", swap]
\end{tikzcd}
\end{equation}

Consider now the diagram of $A$-algebras
\begin{equation*}
\begin{tikzcd}
W\D^f(C) \arrow[rr, "W(\mu^{-1}_{W(C)} \circ \D^f(\eta_C))"] & & WF \D^e W(C) \arrow[rr, "\varepsilon_{\D^e W(C)}"] & & \D^e W(C) \\
W(C) \arrow[rr, "W(\eta_C)"] \arrow[u, "W(g)"] & & WFW(C) \arrow[u, "WF(g')", swap] \arrow[rr, "\varepsilon_{W(C)}"] & & W(C) \arrow[u, "g'"]
\end{tikzcd}
\end{equation*}

Note that the left square commutes by applying $W$ to square $(*)$, and the right square commutes by naturality of $\varepsilon$. By the adjunction axiom $\varepsilon W \circ W \eta = W$, the composition along the bottom is $\id_{W(C)}$, and the composition along the top is $\zeta_C$ by definition. So $g^W = g'$, and we have proved the following.

\begin{lemma}\label{lemma-unique-D-str}
$g^W$ is the unique $\D$-structure on $W(C)$ making $(W(C), g^W)$ into an $(A,e)$-algebra and the unit, $\eta_C$, into a $\D$-homomorphism.
\end{lemma}

The adjunction axioms tell us that $F\varepsilon \circ \eta F  = F$, so that $\D^f F\varepsilon \circ \D^f\eta F = \D^f F$. Since $\mu$ is natural, the following diagram commutes :
\[
\begin{tikzcd}
F \D^e WF \arrow[r, "F\D^e\varepsilon"] \arrow[d, "\mu WF"]                                    & F \D^e \arrow[d, "\mu"]          \\
\D^f FWF \arrow[r, "\D^f F\varepsilon"]                             & \D^f F  \\
\D^f F \arrow[u, "\D^f\eta F"] \arrow[ru, equal] &                             
\end{tikzcd}
\]

Now apply $W$ and use naturality of the counit to get
\[
\begin{tikzcd}
\D^e WF \arrow[rr, "\D^e\varepsilon"] & & \D^e \\
WF \D^e WF \arrow[u, "\varepsilon \D^e WF"] \arrow[rr, "WF\D^e\varepsilon"] \arrow[d, "W\mu WF"]                                    & & WF \D^e \arrow[d, "W\mu"] \arrow[u, "\varepsilon \D^e"]         \\
W\D^f FWF \arrow[rr, "W\D^f F\varepsilon"]                             & & W\D^f F  \\
W\D^f F \arrow[u, "W\D^f\eta F"] \arrow[rru, equal] &  &                     
\end{tikzcd}
\]
Note that the composition up the left is precisely $\zeta F$. So $\varepsilon \D^e = \D^e\varepsilon \circ \zeta F \circ W\mu$.

Naturality of $\varepsilon$ gives
\[
\begin{tikzcd}
WF(R) \arrow[r, "WF(u)"] \arrow[d, "\varepsilon_R"] & WF \D^e(R) \arrow[d, "\varepsilon_{\D^e(R)}"] \arrow[rrr, "\zeta_{F(R)} \circ W(\mu_R)"] & & & \D^e WF(R) \arrow[dlll, "\D^e(\varepsilon_R)"] \\
R \arrow[r, "u"] & \D^e(R) &
\end{tikzcd}
\]
and since the composition along the top row is $(\mu_R \circ F(u))^W$, the counit $\varepsilon_R$ is a $\D$-homomorphism.

\begin{theorem}[The $\D$-Weil Descent]\label{main-theorem}
Suppose $(A,e)$ is a $\D$-ring and $(B,f)$ is an $(A,e)$-algebra, where $B$ is a finite and free $A$-algebra. Suppose also that the associated endomorphisms of $(B,f)$ all have invertible matrix. Then, the $\D$-base change functor, $F^\D \colon \Alg{(A,e)} \to \Alg{(B,f)}$ has a left adjoint denoted $W^\D$ called the $\D$-Weil descent. More precisely, $W^\D(C,g) = (W(C), g^W)$ where $g^W$ is the $\D$-ring structure defined by $\zeta_C \circ W(g)$ and $\zeta \colon W \D^f \to \D^e W$ is the natural transformation defined in equation $(\ddagger)$.

In fact, the natural bijection $\tau(C,R)$ from the classical adjunction restricts to a natural bijection:
\[
\tau^\D((C,g),(R,u)) \colon \text{\normalfont Hom}_{\Alg{(A,e)}}(W^\D(C,g), (R,u)) \to \text{\normalfont Hom}_{\Alg{(B,f)}}((C,g), F^\D(R,u))
\]
\end{theorem}

\begin{remark}
If we apply this theorem to the case when $\D = k$, we get what we call the difference Weil descent and denote it $W^\sigma$. In this case, $\D$-rings are rings with a single (not necessarily injective) endomorphism.
\end{remark}

\section{Further remarks}

In this section we investigate three further aspects. Firstly, we make some observations about properties of the associated endomorphisms that are transferred by the $\D$-Weil descent. In particular, we prove that if an associated endomorphism of $(C,g)$ is trivial, then the same is true of the $\D$-Weil descent, $(W(C), g^W)$. Secondly, we prove results about the composition of a $\D_1$-structure and a $\D_2$-structure and their Weil descents. In particular, we will show that commutativity of these structures is preserved after taking the Weil descent. These two subsections imply that the result of this paper is an actual generalisation of the case of several commuting derivations from \cite{differential_weil_descent}. Thirdly, we explore the necessity of the condition that the associated endomorphisms of $(B,f)$ have invertible matrix for the existence of the $\D$-Weil descent.

Throughout this section, unless stated otherwise, $(A,e)$ is a $\D$-ring, $(B,f)$ is an $(A,e)$-algebra, where $B$ is finite and free over $A$, and $(C,g)$ is a $(B,f)$-algebra. Assumption~\ref{D-assumptions} is still in force.

\bigskip

\subsection{Transfer properties of the associated endomorphisms}

Recall from Definition~\ref{associated-endomorphisms} the projection maps for $\D$. If $\D = \prod_{i=1}^t B_i$ where each $B_i$ is a local $k$-algebra with residue field $k$, then $\pi_i \colon \D \to B_i \to k$ is the composition of the projection onto the $i$th component of $\D$ with the residue map onto $k$. These $\pi_i$ lift to $R$-algebra homomorphisms $\pi_i^R \colon \D(R) \to R$. Then the associated endomorphisms of a $\D$-ring $(R,e)$ are defined by $\pi_i^R \circ e$ for each $i=1, \ldots, t$.

\begin{lemma}\label{lemma-ends-of-Weil-descent}
Let $(C,g)$ be a $(B,f)$-algebra, and suppose that the associated endomorphisms of $(B,f)$ have invertible matrix. Then the associated endomorphisms of the $\D$-Weil descent of $(C,g)$ are the difference Weil descents of the associated endomorphisms of $(C,g)$. In particular, if an associated endomorphism of $(C,g)$ is trivial, then so is the corresponding one of $W^\D(C,g)$.
\end{lemma}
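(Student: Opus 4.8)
The plan is to reduce the statement about the associated endomorphisms of $W^\D(C,g) = (W(C), g^W)$ to a comparison of two difference Weil descent constructions, using the explicit description of $g^W$ from Lemma~\ref{weil-descent-thm} together with the suitable basis of $\D$ constructed before Theorem~\ref{M-inv-equiv-inv-sigma}. Recall that, with respect to that basis, the $i$th associated endomorphism $\sigma_i$ of any $\D$-ring is simply one of the coordinate operators $f_k$, namely the one where $\varepsilon_k \in \mathcal{B}_i^0$. So the claim is fundamentally about how the relevant coordinate operator of $g^W$ relates to the coordinate operator of $g$. First I would fix the suitable basis $\varepsilon = \mathcal{B}$ of $\D$, so that $M$ is block lower triangular with diagonal blocks $M^{\sigma_i}_b$ (where $\sigma_i$ are the associated endomorphisms of $(B,f)$), and name the associated endomorphism of $(C,g)$ corresponding to index $i$ as $\gamma_i = g_{k_i}$.

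The key computation is to extract the $i$th associated endomorphism of $(W(C), g^W)$ from the defining relation of $g^W$. From Lemma~\ref{lemma-D-str-on-poly-ring}, on the $\D$-polynomial algebra $W(B\{T\}_\D) = A\{T\}_\D^{\otimes r}$, the structure $s$ satisfies $(s_j \lambda_i W_{B\{T\}_\D}(t^\theta)) = M^{-1} \cdot (t^{\theta j}(i))$, and $g^W$ is induced on the quotient $W(C)$. Because $M$ is block lower triangular, so is $M^{-1}$, and I would read off that the coordinate operator $s_{k_i}$ corresponding to the associated-endomorphism index $i$ depends only on the diagonal block $(M^{\sigma_i}_b)^{-1}$ and the input coordinate operator $h_{k_i}$; the strictly-lower-triangular part of $M^{-1}$ contributes only operators $h_{j'}$ that do not feed into the $\sigma_i$-coordinate after applying the projection $\pi_i^{W(C)}$. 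Concretely, applying $\pi_i^{W(C)}$ to $g^W$ should isolate exactly the block-$(i,i)$ part of the system, and there the relation $(s_{k_i} \lambda_n W(C)\text{-generators}) = (M^{\sigma_i}_b)^{-1} \cdot (\text{image of } \gamma_i)$ is precisely the defining relation of the difference Weil descent of $\gamma_i$ with respect to the endomorphism $\sigma_i$ of $(B,f)$ (compare the construction in Section~5 specialised to $\D = k$, i.e.\ Theorem~\ref{main-theorem} in the difference case). Thus the $i$th associated endomorphism of $W^\D(C,g)$ is $W^{\sigma_i}(\gamma_i)$, the difference Weil descent of $\gamma_i$.

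I expect the main obstacle to be making rigorous the claim that the block-lower-triangular structure of $M^{-1}$ causes the off-diagonal blocks to drop out after applying the projection $\pi_i^{W(C)}$. This is where one must be careful: the projection $\pi_i$ annihilates all basis elements of $\D$ except the single $\varepsilon_{k_i} \in \mathcal{B}_i^0$, so even though $s$ as a whole is a genuinely twisted operator mixing many coordinates, the composite $\pi_i^{W(C)} \circ g^W$ only sees the $k_i$-th coordinate of the output, and the triangularity ensures this coordinate is governed solely by the diagonal block. I would verify this by writing out $\pi_i^{W(C)} \circ g^W$ on generators and checking that the surviving terms reproduce the difference Weil descent defining equation; the bookkeeping with the reverse-lexicographic ordering of the $rl$-vectors from the Notation box is the delicate part. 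The final sentence of the statement, that a trivial associated endomorphism of $(C,g)$ yields a trivial one on the descent, then follows immediately: if $\gamma_i = \id$ (equivalently $\sigma_i$ and $\gamma_i$ are identities and the descent equation reduces to the identity-preserving case), the difference Weil descent $W^{\sigma_i}(\id)$ is again the identity, since the unit $W_C$ is a $\D$-homomorphism and the identity structure is visibly a solution, which is unique by Lemma~\ref{lemma-unique-D-str}.
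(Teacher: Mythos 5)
Your proposal is correct, but it takes a genuinely different, coordinate-based route from the paper's proof. The paper argues basis-freely: it applies the projection $\pi_i$ to the commuting square that characterises $g^W$, uses Lemma~\ref{lemma-ass-ends-of-tensor-prod} to see that the $i$th associated endomorphism of $g^W\otimes f$ is $\rho_i\otimes\tau_i$ (where $\rho_i$, $\tau_i$ are the $i$th associated endomorphisms of $(W(C),g^W)$ and $(B,f)$), concludes that $\rho_i$ is a difference structure making $W_C$ into a $(B,\tau_i)$-algebra homomorphism, and finishes with the uniqueness of Lemma~\ref{lemma-unique-D-str}; no matrices appear. You instead coordinatise in the suitable basis $\mathcal{B}$ and read the claim off the row-block of the system $(*)$ indexed by $k_i$ (the position of the identity of the $i$th local factor). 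That works, but it needs slightly more than the block-lower-triangularity of $M^{-1}$ that you invoke: triangularity only kills the blocks $(M^{-1})_{k_i,j}$ with $j>k_i$, and you also need $(M^{-1})_{k_i,j}=0$ for $j<k_i$. This does hold, because $a_{jm,k_i}=0$ whenever $\varepsilon_j$ lies in a different local factor from $\varepsilon_{k_i}$, and $k_i$ is the \emph{first} index of its factor, so the entire row-block $k_i$ of $M$ is concentrated in the diagonal block $M^{\sigma_i}_b$; the identity $MM^{-1}=I$ then forces the same for $M^{-1}$. With that supplied, extracting the defining equation of the difference Weil descent and appealing to uniqueness is exactly parallel to the paper, and your treatment of the ``in particular'' clause (the identity structure is visibly a solution, hence the unique one) is the paper's argument. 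Net comparison: the paper's route is shorter and basis-independent, delegating the key identification to Lemma~\ref{lemma-ass-ends-of-tensor-prod}; yours buys an explicit formula for the descended endomorphism on generators at the cost of redoing that identification in coordinates.
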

\begin{proof}
Let $(\sigma_i), (\tau_i), (\upsilon_i), (\rho_i)$ be the associated endomorphisms of $(A,e)$, $(B,f)$, $(C,g)$, $(W(C), g^W)$, respectively. We need to show that $\rho_i = \upsilon_i^{W^\sigma}$. Consider the following diagrams for each $i=1, \ldots, t$:
\[
\begin{tikzcd}
C \arrow[r, "\eta_C"] & F(W(C))  \\
\D(C) \arrow[u, "\pi^C_i"] \arrow[r, "\D(\eta_C)"] & \D(F(W(C))) \arrow[u, "\pi^{F(W(C))}_i", swap] \\
C \arrow[u, "g"] \arrow[r, "\eta_C"] & F(W(C)) \arrow[u, "g^W \otimes f", swap]
\end{tikzcd}
\]

The compositions of the vertical maps on the left are $\upsilon_i$ by definition. On the right they are $\rho_i \otimes \tau_i$ by Lemma~\ref{lemma-ass-ends-of-tensor-prod}. Hence $\rho_i$ is a difference structure on $W(C)$ that makes $(W(C), \rho_i)$ into an $(A, \sigma_i)$-algebra and $\eta_C$ into a $(B, \tau_i)$-algebra homomorphism. Since $\tau_i$ has invertible matrix, Lemma~\ref{lemma-unique-D-str} tells us that such a difference structure is unique, and so we must have $\rho_i = \upsilon_i^{W^\sigma}$. 

For the in particular clause, since the following square commutes
\[
\begin{tikzcd}
C \arrow[r, "\eta_C"] & F(W(C)) \\
C \arrow[u, "\id_C"] \arrow[r, "\eta_C"] & F(W(C)) \arrow[u, "\id_{W(C)} \otimes \id_B", swap]
\end{tikzcd}
\]
we must have $(\id_C)^W = \id_{W(C)}$ by the uniqueness of the difference structure on $W(C)$ making it an $(A,\id_A)$-algebra and $\eta_C$ a $(B,\id_B)$-algebra homomorphism. Note here that $\id_B$ has invertible matrix.
\end{proof}

\begin{remark}
This lemma tells us that we may apply our $\D$-Weil descent result (Theorem~\ref{main-theorem}) in the context of \cite{moosa_scanlon_2013}. Recall that, there, the authors impose the condition that for a $\D$-ring $(R,e)$, $e$ must also be a section to $\pi_1^R$, and hence that the first associated endomorphism must be the identity. Thus, if $(A,e)$ and $(B,f)$ have trivial first associated endomorphism, we may consider the category of $(A,e)$-algebras $(R,u)$ where $u$ has trivial first associated endomorphism, and similarly for $(B,f)$-algebras. Denote these subcategories $\Alg{(A,e)}^*$ and $\Alg{(B,f)}^*$. One checks that $F^\D$ can now be considered as a functor $\Alg{(A,e)}^* \to \Alg{(B,f)}^*$, and the previous lemma tells us that $W^\D$ restricts to a functor $\Alg{(B,f)}^* \to \Alg{(A,e)}^*$ which is still left adjoint to $F^\D$. In particular, our result is an actual generalisation of the single derivation case from \cite{differential_weil_descent}, since the category of differential $A$-algebras is equal to $\Alg{(A,e)}^*$ when we take $\D = k[\varepsilon]/(\varepsilon^2)$.
\end{remark}

We point out here that $W^\D$ does not in general preserve injectivity of the associated endomorphisms. That is, if the $i$th associated endomorphism of $g$ is injective, the $i$th associated endomorphism of $g^W$ may no longer be injective.

\begin{example}\label{example-inj-not-preserved}
Let $\D = k$ so that the associated endomorphism of a $\D$-ring structure is just the $\D$-ring structure itself. Let $A = \mathbb{F}_2$ be the field with two elements, and let $B = \mathbb{F}_2 [\varepsilon] / (\varepsilon^2)$. Let $\id_A$ and $\id_B$ be the $\D$-ring structures on $A$ and $B$ respectively. Note then that if $(C,\rho)$ is a $(B,\id_B)$-algebra, $\rho$ is a $B$-algebra endomorphism of $C$ making the following diagram commute:
\[
\begin{tikzcd}
C \arrow[r, "\eta_C"] & F(W(C)) \\
C \arrow[u, "\rho"] \arrow[r, "\eta_C"] & F(W(C)) \arrow[u, "\rho^W \otimes \id_B", swap]
\end{tikzcd}
\]

Note also that since $\rho$ is a $B$-algebra endomorphism, it is a morphism in $\Alg{B}$, and so we may apply the classical Weil descent to it. Theorem~\ref{adjunction-conditions} tells us that $W(\rho) = \rho^W$.

Let $C = B[t]$ and let $\rho$ be the unique map extending $\id_B$ on $B$ and sending $t \mapsto t^2$. Then $\rho$ is injective. Recall from Section~2 that $W(B[t]) = A[t] \otimes_A A[t]$ and that $\eta_C(t) = t(1) \otimes 1 + t(2) \otimes \varepsilon$. Then

\begin{align*}
    \eta_C (\rho (t)) &= \eta_C (t^2) \\
    &= \eta_C(t)^2 \\
    &= t(1)^2 \otimes 1
\end{align*}
where the last equality holds because $\varepsilon^2 = 0$ and we are in characteristic $2$.

Also
\begin{align*}
    (W(\rho) \otimes \id_B) (\eta_C(t)) &= W(\rho)(t(1)) \otimes 1 + W(\rho)(t(2)) \otimes \varepsilon
\end{align*}

By the commutativity of the diagram, we have that $W(\rho)(t(1)) = t(1)^2$ and $W(\rho)(t(2)) = 0$. Hence $W(\rho)$ is not injective.
\end{example}

\begin{remark}
\begin{enumerate}
    \item This example tells us that in general the difference Weil descent functor does not restrict to the categories of algebras equipped with an injective endomorphism. However, Corollary~\ref{difference-monoid-hom} will tell us that the difference Weil descent does preserve automorphisms, and hence will restrict to a functor in the categories of inversive difference algebras (see \cite{levin_difference_2008}).
    \item The example above uses in an essential way the fact that the characteristic is positive. We are not currently aware of such an example in characteristic zero.
\end{enumerate}
\end{remark}

\bigskip

\subsection{The composition of a $\D_1$-structure and a $\D_2$-structure}

Suppose we now have two finite-dimensional $k$-algebras $\D_1 = \prod_{i=1}^{t_1}B_i$ and $\D_2 = \prod_{j=1}^{t_2} C_j$ where each $B_i$ and $C_j$ are local with residue field $k$. Then, $\D_2 \otimes_k \D_1 = \prod_{i=1}^{t_1} \prod_{j=1}^{t_2} C_j \otimes_k B_i$. From \cite{sweedler_1975} we know that $C_j \otimes_k B_i$ is local with residue field $k$, and hence $\D_2 \otimes_k \D_1$ satisfies Assumption~\ref{D-assumptions}. We may then consider the category of $\D_2 \otimes_k \D_1$-rings. We will write these as $\D_1 \D_2$-rings since
\[
(\D_2 \otimes_k \D_1)(R) = R \otimes_k (\D_2 \otimes_k \D_1) \cong (R \otimes_k \D_2) \otimes_k \D_1 = \D_1(\D_2(R))
\]
for a $k$-algebra $R$.

If some $k$-algebra $R$ has a $\D_1$-structure $e_1$ and a $\D_2$-structure $e_2$, we can form a $\D_1 \D_2$-structure on $R$ by the $k$-algebra homomorphism
\[
\D_1(e_2) \circ e_1 \colon R \to \D_1 \D_2(R)
\]
We now investigate the Weil descent of this composition of $\D_1$-structures and $\D_2$-structures. Suppose $R$, $S$, and $T$ all have a $\D_1$-structure $e_1, f_1, g_1$ and a $\D_2$-structure $e_2,f_2,g_2$ that make $(S,f_1)$ and $(T,g_1)$ into $(R,e_1)$-algebras and $(S,f_2)$ and $(T,g_2)$ into $(R,e_2)$-algebras. We can then define $\D_1 \D_2$-structures on each of them as above.

\begin{lemma}\label{tensor-prod-of-composition}
Assuming the notation of the paragraph above, we have
\[
\D_1(f_2 \otimes g_2) \circ (f_1 \otimes g_1) = (\D_1(f_2) \circ f_1) \otimes (\D_1(g_2) \circ g_1).
\]
\end{lemma}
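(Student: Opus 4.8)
The plan is to avoid the explicit multiplication formula for the tensor product structure and instead exploit its universal characterisation. Recall from Section~3.2 that, over a fixed $\D$-ring $(R,e)$, the tensor product of two $\D$-structures is the \emph{unique} $\D$-structure on $S \otimes_R T$ making the canonical maps $\phi_S \colon S \to S \otimes_R T$ and $\phi_T \colon T \to S \otimes_R T$ into $\D$-homomorphisms. Applied to the algebra $\D_2 \otimes_k \D_1$, this says that the right-hand side $(\D_1(f_2) \circ f_1) \otimes (\D_1(g_2) \circ g_1)$ is the unique $\D_1 \D_2$-structure on $S \otimes_R T$ for which $\phi_S$ and $\phi_T$ are $\D_1 \D_2$-homomorphisms. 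Hence it suffices to verify that the left-hand side $\D_1(f_2 \otimes g_2) \circ (f_1 \otimes g_1)$---which is a genuine $\D_1 \D_2$-structure, being a $k$-algebra homomorphism $S \otimes_R T \to \D_1 \D_2(S \otimes_R T)$ as the composite of $f_1 \otimes g_1$ with the functor $\D_1$ applied to $f_2 \otimes g_2$---also makes $\phi_S$ and $\phi_T$ into $\D_1 \D_2$-homomorphisms.

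By symmetry I will treat only $\phi_S$. I will use two facts straight from the definition of the tensor product structures: first, that $f_1 \otimes g_1$ makes $\phi_S$ a $\D_1$-homomorphism, giving $(f_1 \otimes g_1) \circ \phi_S = \D_1(\phi_S) \circ f_1$; and second, that $f_2 \otimes g_2$ makes $\phi_S$ a $\D_2$-homomorphism, giving $(f_2 \otimes g_2) \circ \phi_S = \D_2(\phi_S) \circ f_2$. The key manoeuvre is to apply the functor $\D_1$ to this second identity and invoke the identification $\D_1 \D_2 = \D_1 \circ \D_2$ noted above (together with $\D_1 \D_2(\phi_S) = \D_1(\D_2(\phi_S))$); this turns it into $\D_1(f_2 \otimes g_2) \circ \D_1(\phi_S) = \D_1 \D_2(\phi_S) \circ \D_1(f_2)$.

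With these two squares in hand the verification is a short paste:
\begin{align*}
\big(\D_1(f_2 \otimes g_2) \circ (f_1 \otimes g_1)\big) \circ \phi_S
&= \D_1(f_2 \otimes g_2) \circ \D_1(\phi_S) \circ f_1 \\
&= \D_1 \D_2(\phi_S) \circ \D_1(f_2) \circ f_1,
\end{align*}
where the first equality uses the $\D_1$-square and the second uses the $\D_1$-image of the $\D_2$-square. Since $\D_1(f_2) \circ f_1$ is by definition the $\D_1 \D_2$-structure on $S$, the right-hand side is exactly $\D_1 \D_2(\phi_S)$ precomposed with the structure map of $S$, which is precisely the assertion that $\phi_S$ is a $\D_1 \D_2$-homomorphism. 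Running the identical argument for $\phi_T$ and then appealing to the uniqueness recalled in the first paragraph yields the claimed equality.

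I do not expect a serious obstacle; the only points demanding care are the bookkeeping around the identification of the functor $\D_1 \D_2$ with $\D_1 \circ \D_2$ and the elementary functoriality fact that applying $\D_1$ to a commuting square of $k$-algebra homomorphisms again commutes. I would also note in passing that the structure maps $R \to S$ and $R \to T$ are themselves $\D_1 \D_2$-homomorphisms for the composite structures---so that the right-hand tensor product is even defined---which follows from the same pasting argument with the structure maps in place of $\phi_S$.
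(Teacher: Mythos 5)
Your proposal is correct and is essentially the paper's own argument: the paper draws the two stacked commuting cubes (the lower one from the definition of $f_1 \otimes g_1$, the upper one obtained by applying $\D_1$ to the cube defining $f_2 \otimes g_2$) and then invokes uniqueness of the $\D_1\D_2$-tensor structure, which is exactly your pasting of the two squares followed by the same uniqueness appeal.
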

\begin{proof}
 Consider the following diagram
\[
\begin{tikzcd}
                                                                        & \mathcal{D}_1 \mathcal{D}_2(S) \arrow[rr]                    &                                                              & \mathcal{D}_1 \mathcal{D}_2(S \otimes_R T)                                 \\
\mathcal{D}_1 \mathcal{D}_2(R) \arrow[rr] \arrow[ru]                    &                                                              & \mathcal{D}_1 \mathcal{D}_2(T) \arrow[ru]                    &                                                                            \\
                                                                        & \mathcal{D}_1(S) \arrow[rr] \arrow[uu, "\mathcal{D}_1(f_2)", pos=0.3] &                                                              & \mathcal{D}_1(S \otimes_R T) \arrow[uu, "\mathcal{D}_1(f_2 \otimes g_2)"'] \\
\mathcal{D}_1(R) \arrow[ru] \arrow[rr] \arrow[uu, "\mathcal{D}_1(e_2)"] &                                                              & \mathcal{D}_1(T) \arrow[ru] \arrow[uu, "\mathcal{D}_1(g_2)", pos=0.3] &                                                                            \\
                                                                        & S \arrow[rr] \arrow[uu, "f_1", pos=0.3]                               &                                                              & S \otimes_R T \arrow[uu, "f_1 \otimes g_1"']                               \\
R \arrow[ru] \arrow[rr] \arrow[uu, "e_1"]                               &                                                              & T \arrow[ru] \arrow[uu, "g_1", pos=0.3]                               &                                                                           
\end{tikzcd}
\]

The horizontal maps are just the natural maps. The lower cube commutes due to the definition of the tensor product of $\D_1$-algebras, and the upper cube commutes by applying $\D_1$ to the cube that commutes due to the definition of the tensor product of $\D_2$-algebras. This means that $\D_1(f_2 \otimes g_2) \circ (f_1 \otimes g_1)$ is a $\D_1 \D_2$-structure on $S \otimes_R T$ that extends the ones on $S$ and $T$, and hence by uniqueness of the tensor product of $\D_1 \D_2$-structures, we must have that
\[
\D_1(f_2 \otimes g_2) \circ (f_1 \otimes g_1) = (\D_1(f_2) \circ f_1) \otimes (\D_1(g_2) \circ g_1)
\]
\end{proof}

We now return to the case when $B$ is a finite and free $A$-algebra, and let $C$ be a $B$-algebra.

\begin{definition}
Let $\DStr_B(C)$ be the collection of triples $(e,f,g)$ where $e$ is a $\D$-structure on $A$, $f$ one on $B$, and $g$ one on $C$ such that $(B,f)$ is an $(A,e)$-algebra and $(C,g)$ is a $(B,f)$-algebra, and the associated endomorphisms of $(B,f)$ have invertible matrix. For any $A$-algebra $R$, let $\DStr_A(R)$ be the collection of pairs $(e,u)$ where $e$ is a $\D$-structure on $A$ and $u$ one on $R$ such that $(R,u)$ is an $(A,e)$-algebra. The $\D$-Weil descent then tells us that we have a map
\begin{align*}
    (\,\cdot\,)^{W^\D} \colon \DStr_B(C) & \to \DStr_A(W(C)) \\
    (e,f,g) & \mapsto (e,g^{W^\D})
\end{align*}

Unless we need to be precise, we will drop the tuple notation and just use $g$ for $(e,f,g)$ and $u$ for $(e,u)$. We will also suppress the $\D$ notation in the map $(\,\cdot\,)^{W^\D}$ and just write $(\,\cdot\,)^W$. In what follows, we will make use of these maps for $\D_1$, $\D_2$, and $\D_1 \D_2$, but it will be clear from context which we mean: $(\,\cdot\,)^{W^{\D_1}}$ will be applied only to $\D_1$-structures, $(\,\cdot\,)^{W^{\D_2}}$ only to $\D_2$-structures, and $(\,\cdot\,)^{W^{\D_1 \D_2}}$ only to $\D_1 \D_2$-structures.
\end{definition}

\begin{lemma}\label{lemma-theta-map}
The following map is well-defined.
\begin{align*}
    \Theta_B \colon \DoneStr_B(C) \times \DtwoStr_B(C) & \to \DonetwoStr_B(C) \\
    ((e_1,f_1,g_1), (e_2,f_2,g_2)) & \mapsto (\D_1(e_2) \circ e_1, \D_1(f_2) \circ f_1, \D_1(g_2) \circ g_1)
\end{align*}
\end{lemma}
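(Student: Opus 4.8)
The plan is to verify, one by one, each of the conditions defining membership in $\DonetwoStr_B(C)$ for the proposed image triple. That the three maps $\D_1(e_2)\circ e_1$, $\D_1(f_2)\circ f_1$, and $\D_1(g_2)\circ g_1$ are genuine $\D_1\D_2$-structures is immediate from the discussion preceding Lemma~\ref{tensor-prod-of-composition}: each is a composite of two $k$-algebra homomorphisms, using that $\D_1$ is a functor so that, for instance, $\D_1(e_2)\colon \D_1(A)\to \D_1\D_2(A)$ is a $k$-algebra homomorphism. What remains are the algebra conditions and, crucially, the invertible-matrix condition.

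First I would dispatch the algebra conditions, namely that the structure map $\iota\colon A\to B$ is a $\D_1\D_2$-homomorphism from $(A,\D_1(e_2)\circ e_1)$ to $(B,\D_1(f_2)\circ f_1)$, and similarly for $B\to C$. This is a short diagram chase. Since $\iota$ is both a $\D_1$- and a $\D_2$-homomorphism, $\D_1(\iota)\circ e_1 = f_1\circ \iota$ and $\D_2(\iota)\circ e_2 = f_2\circ \iota$. Using functoriality of $\D_1$ together with $\D_1\D_2(\iota)=\D_1(\D_2(\iota))$, one computes
\[
\D_1\D_2(\iota)\circ \D_1(e_2)\circ e_1 = \D_1(\D_2(\iota)\circ e_2)\circ e_1 = \D_1(f_2\circ \iota)\circ e_1 = \D_1(f_2)\circ f_1\circ \iota,
\]
which is exactly the required commutativity; the identical computation with $(e,f)$ replaced by $(f,g)$ handles $B\to C$.

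The main obstacle is the final condition: the associated endomorphisms of $(B,\D_1(f_2)\circ f_1)$ must have invertible matrix. The first step is to identify them. The factors of $\D_2\otimes_k\D_1$ are the $C_j\otimes_k B_i$, and, writing $\pi^{(1)}_i$ and $\pi^{(2)}_j$ for the projections of Definition~\ref{associated-endomorphisms} attached to $\D_1$ and $\D_2$, I would check that the residue map of $C_j\otimes_k B_i$ is the tensor of the two residue maps, so that the corresponding projection factors as $\pi^B_{ij}=\pi^{(1),B}_i\circ \D_1(\pi^{(2),B}_j)$. Since $\pi^{(1),R}_i=\id_R\otimes\pi^{(1)}_i$ is natural in $R$, applying naturality to the associated endomorphism $\tau^{(2)}_j=\pi^{(2),B}_j\circ f_2$ of $(B,f_2)$ gives
\[
\pi^B_{ij}\circ \D_1(f_2)\circ f_1 = \pi^{(1),B}_i\circ \D_1(\tau^{(2)}_j)\circ f_1 = \tau^{(2)}_j\circ \pi^{(1),B}_i\circ f_1 = \tau^{(2)}_j\circ \tau^{(1)}_i,
\]
where $\tau^{(1)}_i$ is the corresponding associated endomorphism of $(B,f_1)$. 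Thus the associated endomorphisms of the composite structure are exactly the composites $\tau^{(2)}_j\circ \tau^{(1)}_i$.

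Finally I would show these composites have invertible matrix. By the hypotheses on the two input triples, each $\tau^{(1)}_i$ and each $\tau^{(2)}_j$ has invertible matrix, hence by the equivalence (i) $\Leftrightarrow$ (iii) of Proposition~\ref{inv-matrix-equivalences} each carries an $A$-basis of $B$ to an $A$-basis. Starting from a basis $b_1,\ldots,b_r$, the tuple $\tau^{(1)}_i(b_1),\ldots,\tau^{(1)}_i(b_r)$ is again a basis, and applying $\tau^{(2)}_j$ to \emph{this} basis yields yet another basis; hence $(\tau^{(2)}_j\circ \tau^{(1)}_i)$ sends a basis to a basis and so has invertible matrix by Proposition~\ref{inv-matrix-equivalences} again. (Both $\tau^{(1)}_i$ and $\tau^{(2)}_j$ restrict to $A$, since the structure maps are $\D_1$- and $\D_2$-homomorphisms, so the composite does too and the matrix condition is meaningful.) Having verified all the defining conditions, I conclude that $\Theta_B$ is well-defined.
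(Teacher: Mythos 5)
Your proposal is correct and follows essentially the same route as the paper's proof: verify the algebra conditions by composing the two commuting squares (yours written equationally, the paper's as diagrams), identify the $(i,j)$th associated endomorphism of $(B,\D_1(f_2)\circ f_1)$ as the composite $\tau^{(2)}_j\circ\tau^{(1)}_i$ via the factorisation of the projection $\D_2\otimes_k\D_1\to C_j\otimes_k B_i\to k$ and naturality, and then invoke the basis-to-basis characterisation of Proposition~\ref{inv-matrix-equivalences} to see that a composite of endomorphisms with invertible matrix again has invertible matrix. The only differences are presentational, e.g.\ your explicit remark that the associated endomorphisms restrict to $A$ so that the matrix condition makes sense, which the paper leaves implicit.
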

\begin{proof}
Since $(e_1,f_1,g_1) \in \DoneStr_B(C)$, the following diagram commutes:
\[
\begin{tikzcd}
    \D_1(A) \arrow[r] & \D_1(B) \arrow[r] & \D_1(C) \\
    A \arrow[u, "e_1"] \arrow[r] & B \arrow[u, "f_1"] \arrow[r] & C \arrow[u, "g_1"]
\end{tikzcd}
\]

Since $(e_2,f_2,g_2) \in \DtwoStr_B(C)$, we get a similar diagram. Apply $\D_1$ to this second diagram and compose the vertical maps to get the following commuting diagram:
\[
\begin{tikzcd}
    \D_1 \D_2(A) \arrow[r] & \D_1 \D_2(B) \arrow[r] & \D_1 \D_2(C) \\
    A \arrow[u, "\D_1(e_2) \circ e_1"] \arrow[r] & B \arrow[u, "\D_1(f_2) \circ f_1"] \arrow[r] & C \arrow[u, "\D_1(g_2) \circ g_1)"]
\end{tikzcd}
\]
So these $\D_1 \D_2$-structures make the algebra structure maps into $\D_1 \D_2$-homomorphisms.

Finally, we need to check that the associated endomorphisms of $(B,\D_1(f_2) \circ f_1)$ have invertible matrix. Recall that the associated endomorphisms are defined using the projection maps $\D_2 \otimes_k \D_1 \to k$. For $1 \leq i \leq t_1$ and $1 \leq j \leq t_2$, we will say that the $(i,j)$th projection map for $\D_2 \otimes_k \D_1$ is the composition $\D_2 \otimes_k \D_1 \to C_j \otimes_k B_i \to k$. Then we claim that the $(i,j)$th associated endomorphism of $(B,\D_1(f_2) \circ f_1)$ is $\sigma_j \tau_i$ where the $\tau_i$ are the associated endomorphisms of $(B,f_1)$ and the $\sigma_j$ are those of $(B,f_2)$. To see this, consider the following commuting diagram:

\[
\begin{tikzcd}
\mathcal{D}_1 \mathcal{D}_2(B) \arrow[rr, "\mathcal{D}_1(\pi^2_j)"]                                           & & \mathcal{D}_1(B) \arrow[rr, "\pi^1_i"] & & B \\
\\
\mathcal{D}_1(B) \arrow[rr, "\pi^1_i"] \arrow[uu, "\mathcal{D}_1(f_2)"] \arrow[rruu, "\mathcal{D}_1(\sigma_j)"'] & & B \arrow[rruu, "\sigma_j"']             & &   \\
\\
B \arrow[uu, "f_1"] \arrow[rruu, "\tau_i"']                                                                     &                                       &  & &
\end{tikzcd}
\]
where $\pi_i^1$ is the $i$th projection map for $\D_1$ and $\pi_j^2$ is the $j$th projection map for $\D_2$.

The lower triangle commutes due to the definition of $\tau_i$. The triangle in the upper left commutes by applying $\D_1$ to the definition of $\sigma_j$. It remains to show that the composition along the top row is the $(i,j)$th projection map for $\D_2 \otimes_k \D_1$. But this follows from the commutativity of the following diagram
\[
\begin{tikzcd}
\D_2 \otimes_k \D_1 \arrow[r] \arrow[dr] & C_j \otimes_k \D_1 \arrow[r] \arrow[d] & k \otimes_k \D_1 \arrow[d] \\
& C_j \otimes_k B_i \arrow[r] \arrow[dr] & k \otimes_k B_i \arrow[d] \\
& & k \otimes_k k
\end{tikzcd}
\]
where the composition along the top row is $\D_1(\pi^2_j)$, the composition along the right column is $\pi^1_i$ and the diagonal composition is the $(i,j)$th projection map for $\D_2 \otimes_k \D_1$.

Recall that Proposition~\ref{inv-matrix-equivalences} says that an endomorphism has invertible matrix if and only if it sends any $A$-basis of $B$ to another $A$-basis. Then, since $\tau_i$ and $\sigma_j$ both have invertible matrix, $\sigma_j \tau_i$ must as well.
\end{proof}

A similar proof also shows that we have a well-defined map
\begin{align*}
    \Theta_A \colon \DoneStr_A(R) \times \DtwoStr_A(R) & \to \DonetwoStr_A(R) \\
    ((e_1,u_1),(e_2,u_2)) & \mapsto (\D_1(e_2) \circ e_1, \D_1(u_2) \circ u_1)
\end{align*}

We also get maps
\[
\DtwoStr_B(C) \times \DoneStr_B(C) \to \DtwooneStr_B(C)
\]
and
\[
\DtwoStr_A(R) \times \DoneStr_A(R) \to \DtwooneStr_A(R)
\]
by exchanging the roles of $\D_1$ and $\D_2$. We will also denote these maps by $\Theta_B$ and $\Theta_A$, but it will be clear from context which one we mean.

\begin{theorem}\label{theorem-Theta-compatible-with-W}
For $g_1 \in \DoneStr_B(C)$ and $g_2 \in \DtwoStr_B(C)$,
\[
\Theta_B(g_1,g_2)^W = \Theta_A(g_1^W, g_2^W)
\]
\end{theorem}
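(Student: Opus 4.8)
The plan is to prove the identity by the uniqueness statement for the $\D_1\D_2$-Weil descent structure established in Lemma~\ref{lemma-unique-D-str}. The first thing to record is that the underlying classical Weil descent $W(C)$ and the unit $W_C \colon C \to F(W(C))$ of the classical adjunction depend only on $C$ as a $B$-algebra, not on any choice of operator structure. Consequently the three functors $W^{\D_1}$, $W^{\D_2}$, and $W^{\D_1 \D_2}$ all produce the same ring $W(C)$ with the same unit $W_C$; they differ only in the operator structure placed on $W(C)$. In particular $g_1^W$, $g_2^W$, and $\Theta_B(g_1,g_2)^W$ are three operator structures on one fixed ring, and each makes $W_C$ into a homomorphism of the corresponding kind.

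By Lemma~\ref{lemma-theta-map} we have $\Theta_B(g_1,g_2) \in \DonetwoStr_B(C)$, so the associated endomorphisms of $(B, \D_1(f_2)\circ f_1)$ have invertible matrix, and Theorem~\ref{main-theorem}, applied with $\D \vcentcolon= \D_2 \otimes_k \D_1$, produces the $\D_1\D_2$-structure $\Theta_B(g_1,g_2)^W$ on $W(C)$. By Lemma~\ref{lemma-unique-D-str} this is the \emph{unique} $\D_1\D_2$-structure on $W(C)$ making $W(C)$ into an $(A, \D_1(e_2)\circ e_1)$-algebra and $W_C$ into a $\D_1\D_2$-homomorphism. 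It therefore suffices to verify that $\Theta_A(g_1^W, g_2^W) = \D_1(g_2^W)\circ g_1^W$ enjoys these two properties; equality is then immediate. That $\Theta_A(g_1^W, g_2^W)$ makes $W(C)$ into an $(A, \D_1(e_2)\circ e_1)$-algebra is precisely the well-definedness of $\Theta_A$ recorded after Lemma~\ref{lemma-theta-map}.

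The substantive point is to show that $W_C$ is a $\D_1\D_2$-homomorphism for this structure, which I would do by stacking two commuting squares vertically. The bottom square is the assertion that $W_C$ is a $\D_1$-homomorphism from $(C,g_1)$ to $(F(W(C)), g_1^W \otimes f_1)$, valid by Lemma~\ref{weil-descent-thm} applied to $\D_1$; its top row is $\D_1(W_C)$. The top square is obtained by applying the functor $\D_1$ to the square expressing that $W_C$ is a $\D_2$-homomorphism from $(C,g_2)$ to $(F(W(C)), g_2^W \otimes f_2)$ (Lemma~\ref{weil-descent-thm} for $\D_2$); its vertical maps are $\D_1(g_2)$ and $\D_1(g_2^W \otimes f_2)$, and its bottom row $\D_1(W_C)$ glues onto the top row of the bottom square. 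The outer rectangle then commutes, its left composite is $\D_1(g_2)\circ g_1 = \Theta_B(g_1,g_2)$, and its right composite is $\D_1(g_2^W \otimes f_2)\circ(g_1^W \otimes f_1)$. The crucial identification is that, by Lemma~\ref{tensor-prod-of-composition} with $S = W(C)$ and $T = B$, this right composite equals $(\D_1(g_2^W)\circ g_1^W)\otimes(\D_1(f_2)\circ f_1) = \Theta_A(g_1^W,g_2^W) \otimes (\D_1(f_2)\circ f_1)$, which is exactly the base-change $\D_1\D_2$-structure carried by $F(W(C)) = F^{\D_1\D_2}(W(C), \Theta_A(g_1^W,g_2^W))$. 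Thus the outer rectangle is precisely the diagram asserting that $W_C$ is a $\D_1\D_2$-homomorphism for $\Theta_A(g_1^W, g_2^W)$, and the uniqueness of Lemma~\ref{lemma-unique-D-str} forces $\Theta_A(g_1^W,g_2^W) = \Theta_B(g_1,g_2)^W$.

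I expect the main obstacle to be the bookkeeping in this last step: one must check that the two separate homomorphism squares for $W_C$ (one for $\D_1$, one for $\D_2$) genuinely compose after applying $\D_1$ to the $\D_2$-square, and that the resulting right-hand vertical composite is correctly identified, via Lemma~\ref{tensor-prod-of-composition}, with the tensor-product $\D_1\D_2$-structure that the base-change functor $F^{\D_1\D_2}$ assigns to $F(W(C))$. Everything else reduces to a direct appeal to the uniqueness clause of Lemma~\ref{lemma-unique-D-str}.
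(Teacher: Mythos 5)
Your proposal is correct and follows essentially the same route as the paper: stack the $\D_1$- and $\D_2$-homomorphism squares for $W_C$ (applying $\D_1$ to the latter), identify the right-hand vertical composite via Lemma~\ref{tensor-prod-of-composition} as $(\D_1(g_2^W)\circ g_1^W)\otimes(\D_1(f_2)\circ f_1)$, and conclude by the uniqueness of the $\D_1\D_2$-structure from Lemma~\ref{lemma-unique-D-str}. Your explicit appeal to Lemma~\ref{lemma-theta-map} to justify that the uniqueness lemma applies (invertibility of the matrix for $\D_1(f_2)\circ f_1$) is a point the paper leaves implicit, but the argument is the same.
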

\begin{proof}
Using the $\D_1$-Weil descent and the $\D_2$-Weil descent, the following squares commute:
\[
\begin{tikzcd}
    \D_1(C) \arrow[r, "\D_1(\eta_C)"] & \D_1(FW(C)) \\
    C \arrow[u, "g_1"] \arrow[r, "\eta_C"] & FW(C) \arrow[u, "g_1^W \otimes f_1", swap]
\end{tikzcd}
\]
\[
\begin{tikzcd}
    \D_2(C) \arrow[r, "\D_2(\eta_C)"] & \D_2(FW(C)) \\
    C \arrow[u, "g_2"] \arrow[r, "\eta_C"] & FW(C) \arrow[u, "g_2^W \otimes f_2", swap]
\end{tikzcd}
\]

Apply $\D_1$ to the second square and compose the vertical maps so that the following square commutes:
\[
\begin{tikzcd}
    \D_1 \D_2(C) \arrow[rr, "\D_1 \D_2(\eta_C)"] & & \D_1 \D_2(FW(C)) \\
    C \arrow[u, "\D_1(g_2) \circ g_1"] \arrow[rr, "\eta_C"] & & FW(C) \arrow[u, "\D_1(g_2^W \otimes f_2) \circ (g_1^W \otimes f_1)", swap]
\end{tikzcd}
\]

By Lemma~\ref{tensor-prod-of-composition}, the right vertical map is equal to $(\D_1(g_2^W) \circ g_1^W) \otimes (\D_1(f_2) \circ f_1)$. And hence, by the uniqueness of the $\D_1 \D_2$-structure on $W(C)$ that makes it into an $(A, \D_1(e_2) \circ e_1)$-algebra and $\eta_C$ into a $(B, \D_1(f_2) \circ f_1)$-algebra homomorphism, we must have
\[
(\D_1(g_2) \circ g_1)^W = \D_1(g_2^W) \circ g_1^W
\]
\end{proof}

We now apply this theorem to the difference case. Let $\D = \D_1 = \D_2 = k$. Then, $\D_2 \otimes_k \D_1 = k$ and $\Theta_A$ and $\Theta_B$ are just composition of endomorphisms. $\DStr_B(C)$ is a monoid with composition $\Theta_B$ and identity $(\id_A, \id_B, \id_C)$. Similary, $\DStr_A(R)$ is a monoid under $\Theta_A$ and $(\id_A, \id_R)$.

\begin{corollary}\label{difference-monoid-hom}
In the notation of the above paragraph,
\[
(\,\cdot\,)^W \colon \DStr_B(C) \to \DStr_A(W(C))
\]
is a monoid homomorphism.
\end{corollary}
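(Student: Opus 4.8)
The plan is to verify directly the two defining properties of a monoid homomorphism---preservation of the binary operation and preservation of the identity element---since both are essentially already contained in the preceding results. The corollary therefore amounts to specialising them to the case $\D = \D_1 = \D_2 = k$.

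First I would check multiplicativity. The monoid operation on $\DStr_B(C)$ is $\Theta_B$ and the operation on $\DStr_A(W(C))$ is $\Theta_A$, so the required identity is $\Theta_B(g_1,g_2)^W = \Theta_A(g_1^W, g_2^W)$ for all $g_1, g_2 \in \DStr_B(C)$. This is precisely the content of Theorem~\ref{theorem-Theta-compatible-with-W}. I would only note that in this specialisation the functor $\D_1$ acts as the identity on the underlying algebras, since $\D_1(R) = R \otimes_k k \cong R$, so that $\Theta_B(g_1,g_2) = g_2 \circ g_1$ and $\Theta_A(g_1^W,g_2^W) = g_2^W \circ g_1^W$. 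The theorem then reads $(g_2 \circ g_1)^W = g_2^W \circ g_1^W$, which is exactly the homomorphism property of $(\,\cdot\,)^W$ with respect to the two composition operations.

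Next I would check that the identity is preserved. The identity of $\DStr_B(C)$ is $(\id_A,\id_B,\id_C)$ and that of $\DStr_A(W(C))$ is $(\id_A,\id_{W(C)})$. Since $(\,\cdot\,)^W$ sends $(e,f,g) \mapsto (e, g^{W})$ and leaves the $\D$-structure on $A$ unchanged, it suffices to show $(\id_C)^W = \id_{W(C)}$. This is exactly the \emph{in particular} clause of Lemma~\ref{lemma-ends-of-Weil-descent}, which is proved from the uniqueness of the induced structure (Lemma~\ref{lemma-unique-D-str}) together with the fact that $\id_B$ has invertible matrix.

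Taken together these two verifications establish that $(\,\cdot\,)^W$ is a monoid homomorphism. I do not anticipate a genuine obstacle: the only points requiring care are the bookkeeping of the order of composition when specialising $\Theta_A$ and $\Theta_B$---to confirm that we obtain a homomorphism rather than an anti-homomorphism---and the fact that the target value $g^W$ genuinely lands in $\DStr_A(W(C))$, which is guaranteed since $\id_B$ has invertible matrix so that the $\D$-Weil descent applies throughout.
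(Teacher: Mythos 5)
Your proposal is correct and follows essentially the same route as the paper: multiplicativity is exactly Theorem~\ref{theorem-Theta-compatible-with-W} specialised to $\D_1 = \D_2 = k$, and preservation of the identity is the \emph{in particular} clause of Lemma~\ref{lemma-ends-of-Weil-descent}. Your extra care about the order of composition (that $\Theta_B(g_1,g_2) = g_2 \circ g_1$ in this specialisation) is a welcome clarification but does not change the argument.
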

\begin{proof}
We have that $\Theta_B(g_1, g_2)^W = (g_1 \circ g_2)^W$ and $\Theta_A(g_1^W, g_2^W) = g_1^W \circ g_2^W$. Then Theorem~\ref{theorem-Theta-compatible-with-W} tells us that $(g_1 \circ g_2)^W = g_1^W \circ g_2^W$. Lemma~\ref{lemma-ends-of-Weil-descent} then tells us that $(\id_C)^W = \id_{W(C)}$.
\end{proof}

\begin{remark}
Corollary~\ref{difference-monoid-hom} tells us that the difference Weil descent restricts to the categories of inversive difference algebras, that is, algebras equipped with an automorphism. Indeed, if $(A,e)$, $(B,f)$ and $(C,g)$ are all inversive difference algebras, applying $(\,\cdot\,)^W$ to the equations $g \circ g^{-1} = \id_C = g^{-1} \circ g$ tells us that $g^W$ is also an automorphism on $W(C)$.
\end{remark}

We now further develop these results to study the commutativity of a $\D_1$-structure and a $\D_2$-structure. Let $\Gamma$ be the canonical isomorphism
\begin{align*}
    \Gamma \colon \D_2 \otimes_k D_1 & \to \D_1 \otimes_k \D_2 \\
    \alpha_2 \otimes \alpha_1 & \mapsto\alpha_1 \otimes \alpha_2
\end{align*}
For any $k$-algebra $S$, $\Gamma$ lifts to $\Gamma^S \colon S \otimes_k \D_2 \otimes_k \D_1 \to S \otimes_k \D_1 \otimes_k \D_2$ in the usual way. Therefore, $\Gamma$ induces maps $\DonetwoStr_B(C) \to \DtwooneStr_B(C)$ and $\DonetwoStr_A(R) \to \DtwooneStr_A(R)$ by applying the appropriate $\Gamma$ coordinate-wise. We will also denote these maps $\Gamma$. It should be clear from context which we mean.

\begin{definition}
Let $S$ be a $k$-algebra, equipped with a $\D_1$-structure $e_1$ and a $\D_2$-structure $e_2$. We will say that $e_1$ commutes with $e_2$ if
\[
\Gamma^S \circ \D_1(e_2) \circ e_1 = \D_2(e_1) \circ e_2.
\]

For $g_1 \in \DoneStr_B(C)$ and $g_2 \in \DtwoStr_B(C)$, we will say that $g_1$ commutes with $g_2$ if
\[
\Gamma \circ \Theta_B(g_1, g_2) = \Theta_B(g_2, g_1).
\]
Similarly, for $u_1 \in \DoneStr_A(R)$ and $u_2 \in \DtwoStr_A(R)$, we will say that $u_1$ commutes with $u_2$ if $\Gamma \circ \Theta_A(u_1, u_2) = \Theta_A(u_2, u_1)$.
\end{definition}

\begin{remark}
If we choose bases of $\D_1$ and $\D_2$ and think of $e_1$ and $e_2$ as their corresponding sequence of free operators, the condition
\[
\Gamma^S \circ \D_1(e_2) \circ e_1 = \D_2(e_1) \circ e_2
\]
says that every operator of $e_1$ commutes with every operator of $e_2$.
\end{remark}

We now prove a modification of Theorem~\ref{theorem-Theta-compatible-with-W} that includes $\Gamma$.

\begin{lemma}\label{lemma-Gamma-respects-W}
For $g_1 \in \DoneStr_B(C)$ and $g_2 \in \DtwoStr_B(C)$,
\[
(\Gamma \circ \Theta_B(g_1, g_2))^W = \Gamma \circ \Theta_A(g_1^W, g_2^W)
\]
\end{lemma}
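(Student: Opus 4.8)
The plan is to reduce the statement to the single fact that the switch isomorphism $\Gamma$ commutes with the Weil descent operation $(\,\cdot\,)^W$, and then to establish that fact via the uniqueness of the descended structure (Lemma~\ref{lemma-unique-D-str}). First I would invoke Theorem~\ref{theorem-Theta-compatible-with-W}, which gives $\Theta_A(g_1^W,g_2^W) = \Theta_B(g_1,g_2)^W$; writing $h = \D_1(g_2)\circ g_1$ for the $\D_1\D_2$-structure on $C$ coming from $\Theta_B(g_1,g_2)$, the right-hand side of the lemma becomes $\Gamma^{W(C)}\circ h^W$. Since both sides visibly agree in their $A$-component, the whole lemma collapses to the equality of $\D_2\D_1$-structures on $W(C)$,
\[
(\Gamma^C \circ h)^W = \Gamma^{W(C)} \circ h^W .
\]

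Two compatibility properties of $\Gamma$ drive the argument. The first is naturality: viewing $(\Gamma^S)_S$ as a natural isomorphism of functors $\D_1\D_2 \Rightarrow \D_2\D_1$, one has $\D_2\D_1(\phi)\circ\Gamma^S = \Gamma^{S'}\circ\D_1\D_2(\phi)$ for every $k$-algebra map $\phi\colon S \to S'$, which is immediate since $\Gamma^S = \id_S \otimes \Gamma$. The second, which I expect to be the main technical point, is that $\Gamma$ distributes over the tensor product of $\D$-structures: for $\D_1\D_2$-algebras $(S,p)$ and $(T,q)$ over the common base,
\[
\Gamma^{S \otimes_R T} \circ (p \otimes q) = (\Gamma^S \circ p) \otimes (\Gamma^T \circ q).
\]
This I would verify directly from the explicit formula $(p\otimes q)(s\otimes t) = \bigl(\D_1\D_2(\phi_S)\circ p(s)\bigr)\cdot\bigl(\D_1\D_2(\phi_T)\circ q(t)\bigr)$ recalled in Section~3, using that $\Gamma^{S\otimes_R T}$ is a $k$-algebra homomorphism (so respects the product in $\D_2\D_1(S\otimes_R T)$) together with naturality to pass $\Gamma$ through each $\D_1\D_2(\phi_S)$ and $\D_1\D_2(\phi_T)$.

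With these in hand I would run the uniqueness argument. Taking $\Gamma^{W(C)}\circ h^W$ as a candidate $\D_2\D_1$-structure on $W(C)$, I substitute it and $\Gamma^C\circ h$ into the square asserting that $W_C$ is a $\D_2\D_1$-homomorphism into $F(W(C))$ equipped with its tensor $\D_2\D_1$-structure; rewriting the top edge by naturality and the right edge by the distributivity identity reduces this square to the already-commuting defining square of $h^W$ (the one making $W_C$ a $\D_1\D_2$-homomorphism into $F(W(C))$ with structure $h^W \otimes (\D_1(f_2)\circ f_1)$). To apply Lemma~\ref{lemma-unique-D-str} I must know that the associated endomorphisms of $(B,\Gamma^B\circ(\D_1(f_2)\circ f_1))$ have invertible matrix; but $\Gamma$ merely relabels the local factors $C_j \otimes_k B_i$ of $\D_2\otimes_k\D_1$, so these associated endomorphisms coincide, as maps $B \to B$, with those of $(B,\D_1(f_2)\circ f_1)$, namely the $\sigma_j\tau_i$ shown to have invertible matrix in the proof of Lemma~\ref{lemma-theta-map}. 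Uniqueness then forces $\Gamma^{W(C)}\circ h^W = (\Gamma^C\circ h)^W$, completing the proof. The only genuine obstacle is the bookkeeping in the distributivity step; the remainder is naturality and an appeal to uniqueness.
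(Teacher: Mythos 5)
Your proposal is correct and follows essentially the same route as the paper: establish that $\Gamma$ is compatible with the tensor product of $\D$-structures, use this (together with the commuting square from Theorem~\ref{theorem-Theta-compatible-with-W}) to exhibit $\Gamma^{W(C)}\circ\D_1(g_2^W)\circ g_1^W$ as a $\D_2\D_1$-structure making $W_C$ a $(B,\Gamma^B\circ\D_1(f_2)\circ f_1)$-algebra homomorphism, verify the invertible-matrix hypothesis by observing that $\Gamma$ only permutes the local factors of $\D_2\otimes_k\D_1$, and conclude by uniqueness. The only cosmetic difference is that you verify the tensor-compatibility of $\Gamma$ by direct computation from the explicit formula for $p\otimes q$ plus naturality, whereas the paper deduces it from the uniqueness of the $\D_2\D_1$-structure on $S\otimes_R T$; both are valid.
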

\begin{proof}
Firstly, suppose $e_1, f_1, g_1$ and $e_2, f_2, g_2$ are $\D_1$- and $\D_2$-structures on $R$, $S$, and $T$ making $(S,f_1)$ and $(T,g_1)$ into $(R,e_1)$-algebras and $(S,f_2)$ and $(T,g_2)$ into $(R,e_2)$-algebras. Consider the following diagram:
\[
\begin{tikzcd}
                                                                & \mathcal{D}_2 \mathcal{D}_1(S) \arrow[rr]            &                                                      & \mathcal{D}_2 \mathcal{D}_1(S \otimes_R T)            \\
\mathcal{D}_2 \mathcal{D}_1(R) \arrow[rr] \arrow[ru]            &                                                      & \mathcal{D}_2 \mathcal{D}_1(T) \arrow[ru]            &                                                       \\
                                                                & \mathcal{D}_1 \mathcal{D}_2(S) \arrow[rr] \arrow[uu, "\Gamma^S", pos=0.3] &                                                      & \mathcal{D}_1 \mathcal{D}_2(S \otimes_R T) \arrow[uu, "\Gamma^{S \otimes_R T}"] \\
\mathcal{D}_1 \mathcal{D}_2(R) \arrow[ru] \arrow[rr] \arrow[uu, "\Gamma^R"] &                                                      & \mathcal{D}_1 \mathcal{D}_2(T) \arrow[ru] \arrow[uu, "\Gamma^T", pos=0.3] &                                                       \\
                                                                & S \arrow[rr] \arrow[uu]                              &                                                      & S \otimes_R T \arrow[uu]                              \\
R \arrow[ru] \arrow[rr] \arrow[uu]                              &                                                      & T \arrow[ru] \arrow[uu]                              &                                                      
\end{tikzcd}
\]
where the horizontal maps are the usual ones and the vertical ones in the lower cube are the compositions of the $\D_1$-structure and $\D_2$-structure. By the uniqueness of the $\D_2 \D_1$-structure on $S \otimes_R T$, we have that
\[
\Gamma^{S \otimes_R T} \circ \D_1(f_2 \otimes g_2) \circ (f_1 \otimes g_1) = \left( \Gamma^S \circ \D_1(f_2) \circ f_1 \right) \otimes \left( \Gamma^T \circ \D_1(g_2) \circ g_1 \right)
\]

Now, returning to the original context, the following diagram also commutes:
\[
\begin{tikzcd}
\D_2 \D_1(C) \arrow[r] & \D_2 \D_1(F(W(C))) \\
\D_1 \D_2(C) \arrow[r] \arrow[u, "\Gamma^C"] & \D_1 \D_2(F(W(C))) \arrow[u, "\Gamma^{F(W(C))}", swap] \\
C \arrow[r, "\eta_C"] \arrow[u, "\D_1(g_2) \circ g_1"] & F(W(C)) \arrow[u, "(\D_1(g_2^W) \circ g_1^W) \otimes (\D_1(f_2) \circ f_1)", swap]
\end{tikzcd}
\]

Hence, $\Gamma^{W(C)} \circ \D_1(g_2^W) \circ g_1^W$ is a $\D_2 \D_1$-structure on $W(C)$ making it into an $(A, \Gamma^A \circ \D_1(e_2) \circ e_1)$-algebra and $\eta_C$ into a $(B, \Gamma^B \circ \D_1(f_2) \circ f_1)$-algebra homomorphism. If the associated endomorphisms of $\Gamma^B \circ \D_1(f_2) \circ f_1$ all had invertible matrix, then by the uniqueness of such a $\D_2 \D_1$-structure, we must have that $(\Gamma^C \circ \D_1(g_2) \circ g_1)^W = \Gamma^{W(C)} \circ \D_1(g_2^W) \circ g_1^W$, from which the result follows.

Now, note that the $(i,j)$th projection map for $\D_1 \otimes_k \D_2 = \prod_{i=1}^{t_2} \prod_{j=1}^{t_1} B_j \otimes C_i$ is $\D_1 \otimes_k \D_2 \to B_j \otimes_k C_i \to k$. Let $\pi_{(i,j)}^{\D_1 \otimes \D_2}$ denote the $(i,j)$th projection map for $\D_1 \otimes_k \D_2$, and let $\pi_{(i,j)}^{\D_2 \otimes \D_1}$ denote the $(i,j)$th projection map for $\D_2 \otimes_k \D_1$: $\D_2 \otimes_k \D_1 \to C_j \otimes_k B_i \to k$. Then $\pi_{(i,j)}^{\D_1 \otimes \D_2} \circ \Gamma = \pi_{(j,i)}^{\D_2 \otimes \D_1}$. Thus, the $(i,j)$th associated endomorphism of $\Gamma^B \circ \D_1(f_2) \circ f_1$ is the $(j,i)$th associated endomorphism of $\D_1(f_2) \circ f_1$, $\sigma_i \tau_j$, which has invertible matrix.
\end{proof}

\begin{corollary}\label{commuting-D1-D2}
Let $g_1 \in \DoneStr_B(C)$ and $g_2 \in \DtwoStr_B(C)$. If $g_1$ commutes with $g_2$, then $g_1^W$ commutes with $g_2^W$.
\end{corollary}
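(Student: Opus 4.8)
The plan is to deduce this corollary directly from Theorem~\ref{theorem-Theta-compatible-with-W} and Lemma~\ref{lemma-Gamma-respects-W}, which together express that both the composition map $\Theta$ and the flip $\Gamma$ are compatible with passing to the Weil descent. The whole argument is a short, diagram-free manipulation: apply the descent map $(\,\cdot\,)^W$ to the commutativity equation for $g_1$ and $g_2$, and then rewrite each side using the two earlier results.

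First I would unwind the two definitions in play. The hypothesis that $g_1$ commutes with $g_2$ reads $\Gamma \circ \Theta_B(g_1, g_2) = \Theta_B(g_2, g_1)$ as elements of $\DtwooneStr_B(C)$, and the desired conclusion is $\Gamma \circ \Theta_A(g_1^W, g_2^W) = \Theta_A(g_2^W, g_1^W)$ in $\DtwooneStr_A(W(C))$. Since both sides of the hypothesis are genuine $\D_2 \D_1$-structures lying in the same set, on which $(\,\cdot\,)^W$ is a well-defined function, I may legitimately apply $(\,\cdot\,)^W$ to this equation of structures.

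Next I would evaluate each side of the descended equation. The left-hand side $(\Gamma \circ \Theta_B(g_1, g_2))^W$ equals $\Gamma \circ \Theta_A(g_1^W, g_2^W)$ by Lemma~\ref{lemma-Gamma-respects-W}. For the right-hand side $(\Theta_B(g_2, g_1))^W$ I would invoke the instance of Theorem~\ref{theorem-Theta-compatible-with-W} obtained by exchanging the roles of $\D_1$ and $\D_2$ (the maps $\Theta_A$ and $\Theta_B$ with swapped arguments were introduced precisely for this purpose after that theorem), giving $(\Theta_B(g_2, g_1))^W = \Theta_A(g_2^W, g_1^W)$. Stringing these two equalities together yields $\Gamma \circ \Theta_A(g_1^W, g_2^W) = \Theta_A(g_2^W, g_1^W)$, which is exactly the statement that $g_1^W$ commutes with $g_2^W$.

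I expect the only points needing care, rather than a genuine obstacle, to be bookkeeping ones: confirming that all the structures involved lie in the appropriate $\DStr$-sets so that every occurrence of $(\,\cdot\,)^W$ is defined (the invertibility of the relevant associated endomorphisms of $\Gamma^B \circ \D_1(f_2) \circ f_1$ having already been verified inside the proof of Lemma~\ref{lemma-Gamma-respects-W}), and checking that the swapped-role instance of Theorem~\ref{theorem-Theta-compatible-with-W} is the correct one to apply to $\Theta_B(g_2, g_1)$. Beyond this indexing, no further computation is required.
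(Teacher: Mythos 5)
Your proposal is correct and follows exactly the paper's argument: apply $(\,\cdot\,)^W$ to the equation $\Gamma \circ \Theta_B(g_1,g_2) = \Theta_B(g_2,g_1)$, rewrite the left side via Lemma~\ref{lemma-Gamma-respects-W} and the right side via the $\D_2\D_1$-instance of Theorem~\ref{theorem-Theta-compatible-with-W}. Your additional bookkeeping remarks (well-definedness of $(\,\cdot\,)^W$ on both sides, invertibility of the relevant associated endomorphisms) are exactly the points the paper leaves implicit, so nothing is missing.
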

\begin{proof}
If $g_1$ commutes with $g_2$, then $\Gamma \circ \Theta_B(g_1, g_2) = \Theta_B(g_2, g_1)$. Applying $(\,\cdot\,)^W$ to this equation and using Theorem~\ref{theorem-Theta-compatible-with-W} and Lemma~\ref{lemma-Gamma-respects-W}, we get $\Gamma \circ \Theta_A(g_1^W, g_2^W) = \Theta_A(g_2^W, g_1^W)$. Hence, $g_1^W$ commutes with $g_2^W$.
\end{proof}

For a $k$-algebra $S$, we will say that a $\D$-structure $e$ on $S$ commutes if $\Gamma^S \circ \D(e) \circ e = \D(e) \circ e$. Note that this is equivalent to saying that, with respect to a fixed basis of $\D$, the free operators corresponding to $e$ pairwise commute. For $g \in \DStr_B(C)$, we will say $g$ commutes if $\Gamma \circ \Theta_B(g,g) = \Theta_B(g,g)$, and similarly for $u \in \DStr_A(R)$, $u$ commutes if $\Gamma \circ \Theta_A(u,u) = \Theta_A(u,u)$. An immediate consequence of Corollary~\ref{commuting-D1-D2} is the following.

\begin{corollary}\label{commuting-D}
Let $g \in \DStr_B(C)$. If $g$ commutes, then $g^W$ commutes.
\end{corollary}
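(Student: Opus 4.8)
The plan is to obtain this statement as the diagonal case of Corollary~\ref{commuting-D1-D2}. First I would set $\D_1 = \D_2 = \D$, so that the category $\DonetwoStr_B(C)$ is built from two copies of $\D$ and the swap isomorphism $\Gamma$ becomes the flip of the two tensor factors $S \otimes_k \D \otimes_k \D$. A single $g \in \DStr_B(C)$ is then simultaneously an element $g_1 \in \DoneStr_B(C)$ and an element $g_2 \in \DtwoStr_B(C)$ under these identifications, with $g_1 = g_2 = g$.

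Next I would check that the notion of $g$ \emph{commuting} matches the notion of $g_1$ \emph{commuting with} $g_2$ in this diagonal case. By definition, $g$ commutes means $\Gamma \circ \Theta_B(g, g) = \Theta_B(g, g)$. On the other hand, $g_1$ commutes with $g_2$ means $\Gamma \circ \Theta_B(g_1, g_2) = \Theta_B(g_2, g_1)$; setting $g_1 = g_2 = g$ makes $\Theta_B(g_2, g_1) = \Theta_B(g, g)$, so the two conditions coincide exactly. The same bookkeeping applies on the $A$-side: $g^W$ commutes means $\Gamma \circ \Theta_A(g^W, g^W) = \Theta_A(g^W, g^W)$, which is precisely the conclusion that $g_1^W$ commutes with $g_2^W$ under $g_1 = g_2 = g$.

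With these identifications in place, the hypothesis that $g$ commutes is exactly the hypothesis that $g_1$ commutes with $g_2$, and Corollary~\ref{commuting-D1-D2} delivers that $g_1^W = g^W$ commutes with $g_2^W = g^W$, i.e.\ that $g^W$ commutes. The only point requiring care---and this is the main (if minor) obstacle---is verifying that the instances of $\Gamma$ and $\Theta$ appearing in the diagonal specialization are the same maps used in the statement of Corollary~\ref{commuting-D1-D2}, since that corollary was phrased for possibly distinct $\D_1, \D_2$ and possibly distinct structures $g_1, g_2$. As all the intervening results (Theorem~\ref{theorem-Theta-compatible-with-W} and Lemma~\ref{lemma-Gamma-respects-W}) were established at that level of generality, no additional computation is needed, and the argument is a direct specialization.
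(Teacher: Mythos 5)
Your proposal is correct and matches the paper, which presents this corollary as an immediate consequence of Corollary~\ref{commuting-D1-D2} obtained by the same diagonal specialization $\D_1 = \D_2 = \D$, $g_1 = g_2 = g$. The bookkeeping you carry out (that ``$g$ commutes'' coincides with ``$g_1$ commutes with $g_2$'' when $g_1 = g_2 = g$, and likewise for $g^W$) is exactly what makes the deduction immediate.
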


These results allow us to deduce that commutativity is preserved by the $\D$-Weil descent in several cases. We give details for the case of $m$ endomorphisms and $n$ derivations.

\begin{example}
Suppose $\D = k[x_1, \ldots, x_n] / (x_1, \ldots, x_n)^2 \times k^m$ and that for every $\D$-structure, the first associated endomorphism is trivial (unless $n=0$, in which case we do not impose that any associated endomorphism is trivial). Then, a $\D$-structure is a collection of $n$ derivations and $m$ endomorphisms. Suppose also that for a given $A$, $B$, $C$, all of the derivations and endomorphisms pairwise commute. Then, by Corollary~\ref{commuting-D}, we have that the Weil descents of all the derivations and endomorphisms pairwise commute.
\end{example}

\begin{remark}
The $n=0$ case also follows from Corollary~\ref{difference-monoid-hom}. The $m=0$ case appears in \cite{differential_weil_descent}.
\end{remark}

\begin{remark}
It seems possible that Corollary~\ref{commuting-D} could be extended to a more general context where commutativity is replaced by an iterativity condition as in Section~2.2 of \cite{moosa_scanlon_ghs}. We leave this for future work as it goes beyond the scope of this paper.
\end{remark}

\bigskip

\subsection{On the necessity of having invertible matrix}

It is a natural question to ask whether a converse to our main theorem holds.

\begin{question}
If $F^\D$ has a left adjoint, must every associated endomorphism of $(B,f)$ have invertible matrix?
\end{question}

We do not yet know the answer in general, but we do have the following partial converse which imposes some mild conditions on such a left adjoint. We use the following notation. For each $z \in \D(B)$, let $g^z \colon B[t] \to \D(B[t])$ be the $\D$-structure on $B[t]$ that extends $f$ on $B$ and sends $t \mapsto z$. 

\begin{theorem}\label{partial-converse}
Suppose $F^\D$ has a left adjoint, $W^\D$, and that for each $z \in \D(B)$ the underlying $A$-algebra of $W^\D(B[t], g^z)$ is a faithfully flat $A$-module. Then, the associated endomorphisms of $(B,f)$ all have invertible matrix.
\end{theorem}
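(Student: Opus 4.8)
The plan is to feed the hypothesized left adjoint into the matrix criterion of Lemma~\ref{technical-lemma}, and then descend the resulting surjectivity of $M$ from the faithfully flat algebras $W^\D(B[t],g^z)$ down to $A$. The point of allowing $z$ to vary is that it lets the left-hand sides of $(*)$ sweep out all of $A^{rl}$.

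First I would fix $z \in \D(B)$, set $C_z = (B[t],g^z)$, and write $W^\D(C_z) = (\tilde W_z, \tilde u_z)$, so that by hypothesis $\tilde W_z$ is faithfully flat over $A$. The unit $\eta_z \colon C_z \to F^\D W^\D(C_z)$ is a morphism of $\Alg{(B,f)}$, hence a $(B,f)$-algebra homomorphism into $\tilde W_z \otimes_A B$ restricting to $b \mapsto 1 \otimes b$ on $B$. Applying Lemma~\ref{technical-lemma} to $\eta_z$ with $(R,u) = (\tilde W_z, \tilde u_z)$ and then evaluating the resulting identity at $c=t$ is the core computation: writing $z = \sum_j z_j \varepsilon_j$ with $z_j \in B$ we have $(g^z)_j(t) = z_j$ and $\lambda_i \eta_z(z_j) = \lambda_i(z_j)\cdot 1$, so $(*)$ collapses to
\[
(\lambda_i(z_j))\otimes 1 = M\cdot(\tilde u_{z,j}\,\lambda_i\eta_z(t)) \quad\text{in } \tilde W_z^{rl}.
\]
In other words, the image of the vector $\vec v_z := (\lambda_i(z_j)) \in A^{rl}$ under the map $\tilde W_z^{rl}\to\tilde W_z^{rl}$ induced by $M$ contains it, i.e.\ $\vec v_z\otimes 1$ lies in the image of $M$ over $\tilde W_z$.

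The descent step then runs as follows. Let $N = \coker(M\colon A^{rl}\to A^{rl})$; right-exactness of $-\otimes_A\tilde W_z$ identifies $N\otimes_A\tilde W_z$ with the cokernel of $M$ over $\tilde W_z$, so the class of $\vec v_z$ vanishes in $N\otimes_A\tilde W_z$. Faithful flatness makes the canonical map $N\to N\otimes_A\tilde W_z$ injective, whence $\vec v_z$ already lies in $\operatorname{im}(M)$. Since $\{b_i\otimes\varepsilon_j\}$ is an $A$-basis of $\D(B)$, the rule $z\mapsto\vec v_z$ is an $A$-linear bijection $\D(B)\to A^{rl}$, so as $z$ ranges over $\D(B)$ we conclude $\operatorname{im}(M)=A^{rl}$, i.e.\ $M$ is surjective. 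A surjective endomorphism of the finite free module $A^{rl}$ admits a right inverse $N_0$ with $MN_0 = I$, so $\det M\in A^\times$ and $M$ is invertible; Theorem~\ref{M-inv-equiv-inv-sigma} then yields that every associated endomorphism of $(B,f)$ has invertible matrix.

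I expect the main obstacle to be the descent step, precisely because the faithfully flat algebra $\tilde W_z$ changes with $z$. What makes it go through is that membership of a fixed vector in $\operatorname{im}(M)$ can be tested after faithfully flat base change one $z$ at a time, through injectivity of $N\to N\otimes_A\tilde W_z$; one never needs a single ring over which $M$ is simultaneously surjective. A secondary, purely bookkeeping, difficulty is keeping the two uses of $\lambda_i$ (as a projection $B\to A$ and as a projection $\tilde W_z \otimes_A B\to\tilde W_z$) and the coordinatizations in Lemma~\ref{technical-lemma} straight.
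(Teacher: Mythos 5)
Your proof is correct and follows essentially the same route as the paper: apply Lemma~\ref{technical-lemma} to the unit at $(B[t],g^z)$, evaluate at $t$, let $z$ range over $\D(B)$ to see every vector of $A^{rl}$ is in the image of $M$ after base change, descend by faithful flatness, and conclude invertibility of the surjective $M$ via a right inverse and determinants. The only (immaterial) difference is that you justify the descent step directly with the cokernel argument, whereas the paper cites the linear-systems characterisation of faithful flatness from van den Dries--Schmidt.
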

\begin{proof}
Note that by \cite[Section~1.9]{van_den_dries_bounds_1984}, for any $R$-algebra $S$, $S$ is a faithfully flat $R$-module if and only if $S$ is a flat $R$-module and every linear system of equations defined over $R$ which has a solution in $S$ already has a solution in $R$.

For $z \in \D(B)$, consider the unit of the adjunction $\eta \colon (B[t], g^z) \to F^\D W^\D(B[t], g^z)$. That this is a $\D$-homomorphism at $t$ means that 
\begin{align*}\tag{$*$}
    \sum_{m=1}^{l} \sum_{n=1}^r \lambda_n(\eta(g^z_m(t))) \otimes b_n \ & \varepsilon_m = \\ 
    \sum_{i=1}^r \sum_{j=1}^{l} &\sum_{k=1}^{l} \sum_{n=1}^r \sum_{m=1}^{l} a_{jkm} \lambda_n(f_k(b_i)) h^z_j(\lambda_i(\eta(t))) \otimes b_n \ \varepsilon_m
\end{align*}
where $h^z$ is the $\D$-structure on $W^\D(B[t],g^z)$. Write $z = \sum_m \beta_m \varepsilon_m$ and $\beta_m = \sum_n a_{nm} b_n$. Then, $\lambda_n ( \eta (g^z_m(t))) = \lambda_n (\eta(\beta_m)) = \lambda_n (\beta_m) = a_{nm}$ since $\eta$ is a $B$-algebra homomorphism.

Let $\bar{a}$ be the vector in $A^{rl}$ of the elements $a_{nm}$. Then, equation $(*)$ tells us that we have a solution in $W^\D(B[t],g^z)$ to the system $\bar{a} = M \bar{x}$. Since $W^\D(B[t],g^z)$ is faithfully flat, we have a solution in $A$, and hence $M$ is onto as a linear map $A^{rl} \to A^{rl}$. Let $e_i$ be the standard basis of $A^{rl}$ and $u_i \in A^{rl}$ with $M u_i = e_i$. Then, the matrix whose columns are $u_i$ is a right inverse to $M$. Taking determinants tells us that $M$ is invertible.
\end{proof}

If $A$ is a field, then $W^\D(B[t], g^z)$ is a free $A$-module---hence faithfully flat---so Theorems~\ref{main-theorem} and \ref{partial-converse} yield the following:

\begin{corollary}\label{corollary-inv-matrix-is-necessary}
Suppose $A$ is a field. Then, $F^\D$ has a left adjoint if and only if the associated endomorphisms of $(B,f)$ all have invertible matrix.
\end{corollary}

This result specialises to the difference case: 

\begin{corollary}
Suppose $(K,\sigma) \leq (L,\tau)$ is an extension of difference fields where $L/K$ is finite and $\sigma$ is an automorphism. Then, the difference base change functor has a left adjoint (the difference Weil descent). 
\end{corollary}
\begin{proof}
Note that by Lemma~\ref{aut-iff-inv-matrix}, $\tau$ is an automorphism if and only if it has invertible matrix. Since $\sigma$ is an automorphism, $L/K$ is a finite-dimensional $K$-vector space, and $\tau$ is injective, $\tau$ must also be an automorphism.
\end{proof}

\section{Appendix: An explicit construction of the $\D$-Weil descent}

While the construction of the $\D$-ring structure $g^W$ given in Section~5 is very natural, it does not yield an explicit or computational construction. In this appendix we will sketch a construction that parallels the classical one. Let $\varepsilon_1, \ldots, \varepsilon_l$ be a $k$-basis of $\D$ and $b_1, \ldots, b_r$ an $A$-basis of $B$. We continue to impose Assumptions~\ref{D-assumptions} and \ref{ass=inv-matrix}.

We need some notation and a technical result relating the matrix $M$ to whether an algebra homomorphism is also a $\D$-homomorphism.

\begin{notation}
For a collection of elements $\{x_{ij} \colon 1 \leq i \leq r, 1 \leq j \leq l \}$ in some $A$-algebra, we write $(x_{ij})$ for the $rl$-vector ordered reverse lexicographically on the indices $i$ and $j$. We write $M \cdot (x_{ij})$ to denote the standard matrix multiplication of an $rl \times rl$ matrix with an $rl$-vector. Thus, the result is an $rl$-vector.
\end{notation}

\begin{lemma}\label{technical-lemma}
Let $(C,g)$ be a $(B,f)$-algebra, $(R,u)$ an $(A,e)$-algebra, and $\phi \colon C \to F(R) = R \otimes_A B$ a $B$-algebra homomorphism. Then, $\phi$ is a $(B,f)$-algebra homomorphism if and only if the following equation holds for every $c \in C$:
\[\tag{$*$}
(\lambda_i \phi g_j(c)) = M \cdot (u_j\lambda_i \phi(c))
\]

As a result, when $M$ is invertible, the values $u_j \lambda_i \phi(c)$ are uniquely determined.
\end{lemma}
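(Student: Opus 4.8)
The plan is to unwind the definition of a $(B,f)$-algebra homomorphism and reduce the claimed identity to a coefficient comparison in a fixed $R$-basis of $\D(R \otimes_A B)$. Since $\phi$ is assumed to be a $B$-algebra homomorphism, Definition~\ref{D-maps-def} tells us that $\phi$ is a $(B,f)$-algebra homomorphism precisely when it is a $\D$-homomorphism, i.e.\ when
\[
\D(\phi) \circ g = (u \otimes f) \circ \phi,
\]
where $u \otimes f$ is the $\D$-structure on $F(R) = R \otimes_A B$ provided by Corollary~\ref{D-base-change}. Both sides are maps $C \to \D(R \otimes_A B)$, and since $\{(1 \otimes b_n) \otimes \varepsilon_m \colon 1 \le n \le r,\ 1 \le m \le l\}$ is an $R$-basis of $\D(R \otimes_A B)$, the identity holds for a given $c$ if and only if the $((1 \otimes b_n)\varepsilon_m)$-coefficients of the two sides agree for every $m$ and $n$. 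I would show that this coefficient equation is exactly the $(m,n)$-entry of $(*)$ under the reverse-lexicographic indexing.

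For the left-hand side, writing $g(c) = \sum_{j=1}^l g_j(c)\varepsilon_j$ and applying $\D(\phi)$ gives $\sum_j \phi(g_j(c))\varepsilon_j$; expanding each $\phi(g_j(c)) = \sum_{i=1}^r \lambda_i\phi g_j(c)\,(1 \otimes b_i)$ shows that the coefficient of $(1 \otimes b_n)\varepsilon_m$ is simply $\lambda_n \phi g_m(c)$, which is the entry of the vector $(\lambda_i \phi g_j(c))$ in block $m$, position $n$.

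The substantive computation is the right-hand side. Setting $r_i = \lambda_i\phi(c)$, so that $\phi(c) = \sum_i r_i \otimes b_i$, I would apply $u \otimes f$ through its explicit formula $(u\otimes f)(r \otimes b) = (\D(\phi_R)\circ u(r))\cdot(\D(\phi_B)\circ f(b))$, where $\phi_R \colon R \to R \otimes_A B$ and $\phi_B \colon B \to R \otimes_A B$ are the natural maps. Substituting the coordinate expansions $u(r_i) = \sum_j u_j(r_i)\varepsilon_j$ and $f(b_i) = \sum_k f_k(b_i)\varepsilon_k$, multiplying out via the structure constants $\varepsilon_j\varepsilon_k = \sum_m a_{jkm}\varepsilon_m$, expanding $f_k(b_i) = \sum_n \lambda_n(f_k(b_i))\,b_n$, and moving the scalars $\lambda_n(f_k(b_i)) \in A$ across the tensor, the coefficient of $(1 \otimes b_n)\varepsilon_m$ collects to $\sum_{i,j,k} a_{jkm}\lambda_n(f_k(b_i))\,u_j(r_i)$. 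By the definition of the block $M_{mj}$ this is exactly $\sum_{i,j}(M_{mj})_{ni}\,u_j\lambda_i\phi(c)$, the entry in block $m$, position $n$, of $M \cdot (u_j\lambda_i\phi(c))$. Equating coefficients over all $m,n$ yields $(*)$, and conversely $(*)$ forces the square to commute.

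The main obstacle is bookkeeping rather than anything conceptual: one must keep the index coming from $u$ (which I call $j$) distinct from the one coming from $f$ (which I call $k$), and verify that the reverse-lexicographic ordering of the $rl$-vectors matches the block layout of $M$, so that the triple sum genuinely reassembles as $M \cdot (u_j\lambda_i\phi(c))$. Once $(*)$ is established, the final clause is immediate: if $M \in \text{Mat}_{rl \times rl}(A)$ is invertible then $(u_j\lambda_i\phi(c)) = M^{-1}\cdot(\lambda_i\phi g_j(c))$, so the scalars $u_j\lambda_i\phi(c)$ are uniquely determined by the vector $(\lambda_i\phi g_j(c))$.
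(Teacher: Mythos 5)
Your proposal is correct and follows essentially the same route as the paper's proof: reduce the statement to commutativity of the square $\D(\phi)\circ g = (u\otimes f)\circ\phi$, expand both sides in the $R$-basis $\{(1\otimes b_n)\,\varepsilon_m\}$ using the coordinate maps and the structure constants $a_{jkm}$, and compare coefficients to recover $(*)$. The bookkeeping you describe is exactly the computation carried out in the paper, and the final clause about invertibility of $M$ is handled identically.
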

\begin{proof}
$\phi$ is a $(B,f)$-algebra homomorphism if and only if it is a $\D$-homomorphism, if and only if the following diagram commutes:
\[
\begin{tikzcd}
\D(C) \arrow[r, "\D(\phi)"] & \D(F(R)) \\
C \arrow[u, "g"] \arrow[r, "\phi"] & F(R) \arrow[u, "u \otimes f"]
\end{tikzcd}
\]

Now expand both compositions and equate coefficients of the $b_n \varepsilon_m$.
\end{proof}

\begin{remark}\label{remark-generators-M}
If some $S \subseteq C$ generates $C$ as a $B$-algebra, then it is enough to ask for equality $(*)$ to hold for every $s \in S$.
\end{remark}

Our explicit construction of the $\D$-Weil descent parallels the classical construction. So we need the algebraic notions of $\D$-ideals, $\D$-quotients, and $\D$-polynomial rings. The proofs will be mostly omitted---they all essentially follow from simple computations using the fact that $\D$ is a free and finite $k$-algebra.

\begin{definition}
Let $(R,e)$ be a $\D$-ring, and let $I$ be an ideal of $R$. We say that $I$ is a $\mathcal{D}$-ideal if $e(I) \subseteq \mathcal{D}(I) \vcentcolon = I \otimes_k \D$. Note that $\D(I)$ is an ideal of $\D(R)$: if $IR \subseteq I$, then 
\[
\D(I) \cdot \D(R) = (I \otimes_k \D) \cdot (R \otimes_k \D) \subseteq I \otimes_k \D.
\]
\end{definition}

\begin{remark}
In the context of Example~\ref{D-ring-examples}(1) where $\sigma$ is trivial, $I$ is a $\D$-ideal if and only if it is a differential ideal, that is, if $\delta(I) \subseteq I$. For Example~\ref{D-ring-examples}(2), $\D$-ideals are ideals with $\sigma_i(I) \subseteq I$ for each $i$.
\end{remark}

\begin{lemma}
Let $(R,e)$ and $(S,f)$ be two $\mathcal{D}$-rings and suppose $\phi \colon R \to S$ is a $\mathcal{D}$-homomorphism. Then, $\ker \phi$ is a $\mathcal{D}$-ideal.
\end{lemma}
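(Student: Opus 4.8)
The plan is to verify the definition of a $\D$-ideal directly for $I \vcentcolon= \ker\phi$. Since $I$ is an ideal of $R$ by the usual theory of ring homomorphisms, the only content to check is the inclusion $e(I) \subseteq \D(I) = I \otimes_k \D$.

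First I would record what it means for $\phi$ to be a $\D$-homomorphism. By Definition~\ref{D-maps-def} the relevant square commutes, that is, $\D(\phi) \circ e = f \circ \phi$. Hence for any $r \in I$ we have $\D(\phi)(e(r)) = f(\phi(r)) = f(0) = 0$, so $e(r) \in \ker \D(\phi)$. It therefore suffices to show that $\ker \D(\phi) = \D(I)$, for then $e(I) \subseteq \ker \D(\phi) = \D(I)$, as required.

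The crux is thus the identification $\ker(\phi \otimes \id_\D) = I \otimes_k \D$. This rests on the fact that, since $k$ is a field, every $k$-module is free, so in particular $\D$ is flat over $k$. Flatness means the functor $- \otimes_k \D$ carries the left-exact sequence $0 \to I \to R \xrightarrow{\phi} S$ to the left-exact sequence $0 \to I \otimes_k \D \to R \otimes_k \D \xrightarrow{\phi \otimes \id_\D} S \otimes_k \D$, giving $\ker \D(\phi) = I \otimes_k \D = \D(I)$ directly. Alternatively, and more concretely, I would fix a $k$-basis $d_1, \ldots, d_n$ of $\D$, write a general element of $\D(R)$ uniquely as $\sum_{j=1}^n r_j \otimes d_j$, and observe that $\D(\phi)$ sends it to $\sum_{j=1}^n \phi(r_j) \otimes d_j$; by uniqueness of coordinates in $\D(S) = S \otimes_k \D$ this vanishes exactly when every $\phi(r_j) = 0$, that is, when each $r_j \in I$, which is precisely the statement that the element lies in $I \otimes_k \D$.

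I do not anticipate a serious obstacle: the argument is a one-line diagram chase combined with the standard fact that tensoring with a flat (here automatically free) $k$-module is exact. The only point deserving care is ensuring that $\D(I) = I \otimes_k \D$ really is realised as a submodule of $\D(R)$ via the map induced by the inclusion $I \hookrightarrow R$, which is exactly the injectivity furnished by flatness; once this is in place, the equality $\ker \D(\phi) = \D(I)$ is immediate and the proof concludes.
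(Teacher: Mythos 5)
Your proof is correct, and its overall skeleton matches the paper's: first a diagram chase using $\D(\phi)\circ e = f\circ\phi$ to place $e(\ker\phi)$ inside $\ker\D(\phi)$, then an identification $\ker\D(\phi) = \D(\ker\phi)$ resting on flatness of $\D$ over $k$. Where you differ is in how that identification is justified. The paper tensors the full kernel--cokernel exact sequence of $\phi$ with $\D$, lines it up against the kernel--cokernel sequence of $\D(\phi)$, and invokes the four lemma to see that the inclusion $\D(\ker\phi)\subseteq\ker\D(\phi)$ is onto. You instead observe directly that tensoring the left-exact sequence $0\to\ker\phi\to R\to S$ with the flat (indeed free, since $k$ is a field) module $\D$ preserves left-exactness, which gives the equality in one step; your alternative coordinate computation with a $k$-basis of $\D$ is an equally valid, fully elementary route to the same fact. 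Your version is shorter and avoids the homological machinery; the paper's version is essentially the same content dressed in the four lemma. Both are sound, and nothing is missing from your argument.
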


\begin{lemma}\label{D-quotients}
Let $I$ be an ideal of $R$. Then, $\mathcal{D}(R) / \mathcal{D}(I) \cong \mathcal{D}(R/I)$.
\end{lemma}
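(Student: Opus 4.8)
The plan is to obtain both sides as pieces of a single short exact sequence produced by base-changing an ideal sequence along $\D$. First I would write the canonical short exact sequence of $k$-modules
\[
0 \longrightarrow I \longrightarrow R \xrightarrow{\ q\ } R/I \longrightarrow 0,
\]
with $I \hookrightarrow R$ the inclusion and $q$ the quotient map. Since $\D$ is a finite-dimensional, and hence free, $k$-module, it is flat, so the functor $- \otimes_k \D$ is exact and carries this into the exact sequence
\[
0 \longrightarrow \D(I) \longrightarrow \D(R) \xrightarrow{\ \D(q)\ } \D(R/I) \longrightarrow 0.
\]
This is exactly the flatness input already used in the previous lemma: tensoring a short exact sequence with the free module $\D$ keeps it exact, which in particular guarantees that $\D(I) = I \otimes_k \D$ genuinely injects into $\D(R)$ and does so as the kernel of $\D(q)$.

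From here the conclusion is immediate. The map $\D(q) = q \otimes \id_\D$ is a $k$-algebra homomorphism because $q$ is; it is surjective by right-exactness of the tensor product; and by the displayed exactness its kernel is precisely $\D(I)$. The first isomorphism theorem for $k$-algebras then yields the desired isomorphism $\D(R)/\D(I) \cong \D(R/I)$. One can check that this map sends the class of $r \otimes d$ to $\bar r \otimes d$, so it is canonical and in fact an isomorphism of $\D(R)$-algebras.

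I do not expect a real obstacle, since the statement is little more than right-exactness of base change combined with the flatness of $\D$. The only point meriting attention is that treating $\D(I)$ as an honest submodule (indeed an ideal) of $\D(R)$ already presupposes flatness; with that in hand, identifying $\D(I)$ with $\ker \D(q)$ and passing to the quotient is routine, and no appeal to the four lemma is needed here, unlike in the preceding argument.
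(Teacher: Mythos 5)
Your proof is correct and follows essentially the same route as the paper: tensor the short exact sequence $0 \to I \to R \to R/I \to 0$ with the flat (free) $k$-module $\D$ and read off the isomorphism from the resulting exact sequence. The extra remarks about surjectivity of $\D(q)$ and the first isomorphism theorem just make explicit what the paper leaves implicit.
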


\begin{lemma}\label{D-structure-on-quotient}
Let $(R,e)$ be a $\mathcal{D}$-ring, $(S,f)$ an $(R,e)$-algebra, and $I$ a $\mathcal{D}$-ideal of $S$. Then, there exists a unique $\mathcal{D}$-ring structure on the quotient $S / I$ given by
\begin{align*}
    \bar{f} \colon S / I & \to \mathcal{D}(S / I) \\
    s + I & \to f(s) + \mathcal{D}(I)
\end{align*}
which makes the quotient map $q \colon S \to S/I$ into an $(R,e)$-algebra homomorphism.
\end{lemma}

We now need the natural notion of a $\D$-polynomial ring. These have been defined in Section~3.1 of \cite{moosa_scanlon_ghs} (implicitly) and in Remark~3.8 of \cite{moosa_scanlon_2013}. We expand on the details here. First fix a $k$-basis $\varepsilon = \{ \varepsilon_1, \ldots, \varepsilon_{l} \}$ of $\D$. For any $k$-algebra $R$, $\D(R)$ has $R$-basis $\{ 1~\otimes~\varepsilon_1 , \ldots , 1~\otimes~\varepsilon_l \}$. As before, we will abuse notation and write $\varepsilon_i$ for $1 \otimes \varepsilon_i$ in $\D(R)$, but it will be clear from context which ring we are working in. If $(R,e)$ is a $\D$-ring, we denote by $e_i \colon R \to R$ the coordinate maps of $e$ with respect to the basis $\varepsilon$. That is, the maps $e_i$ are the additive operators of $R$ such that $e(r) = \sum_{i=1}^l e_i(r) \varepsilon_i$ for all $r \in R$.

\begin{definition}\label{def-theta-etheta}
We denote by $\Theta$ the set of all finite words on the alphabet $\{ 1, \ldots, l \}$. For a $\D$-ring $(R,e)$ and $\theta \in \Theta$, we will write $e_\theta$ for the corresponding composition of coordinatised $\D$-operators. For example, if $\theta = 123$, then $e_\theta = e_3 \circ e_2 \circ e_1$. Note then that $e_{\theta_1 \theta_2} = e_{\theta_2} \circ e_{\theta_1}$.
\end{definition}

\begin{definition}\label{D-polynomial-ring-def}
Let $(R,e)$ be a $\D$-ring and $T = (t)_{t \in T}$ a collection of indeterminates. The $\D$-polynomial algebra in indeterminates $T$ over $(R,e)$ with respect to $\varepsilon$ is the ring 
\[
R\{T\}_\D^\varepsilon = R[t^\theta \colon t \in T \text{ and } \theta \in \Theta]
\]
where $(t^\theta)_{t \in T, \theta \in \Theta}$ is a new family of indeterminates, equipped with homomorphism 
\begin{align*}
    e' \colon R\{T\}_\D^\varepsilon & \to \D(R\{T\}_\D^\varepsilon) \\
    t^\theta & \mapsto t^{\theta 1} \varepsilon_1 + t^{\theta 2} \varepsilon_2 + \ldots t^{\theta l} \varepsilon_l \\
    r & \mapsto e(r)
\end{align*}
This makes $(R\{T\}_\D^\varepsilon, e')$ an $(R,e)$-algebra.
\end{definition}

Suppose $(S,f)$ is an $(R,e)$-algebra and $X \subseteq S$. We denote by $R\{X\}_\D$ the $\D$-ring generated in $S$ by $X$ over $(R,e)$. This is a well-defined notion since the intersection of a collection of $\D$-subrings is a $\D$-subring.

\begin{lemma}\label{D-poly-onto}
Suppose that $(S,f)$ is an $(R,e)$-algebra which is generated as a $\D$-ring by the (possibly infinite) tuple $\bar{a} = (a_i)_{i \in I}$ over $(R,e)$, so that $S = R\{\bar{a}\}_\D$. Let $\bar{t} = (t_i)_{i \in I}$ be a tuple of indeterminates. Then, there exists a unique, surjective $(R,e)$-algebra homomorphism $\text{\normalfont ev}_{\bar{a}} \colon R\{\bar{t}\}_\D^\varepsilon \to S$ which maps $t_i \mapsto a_i$ for each $i \in I$.
\end{lemma}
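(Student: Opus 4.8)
The plan is to construct $\text{\normalfont ev}_{\bar{a}}$ first as a homomorphism of underlying $R$-algebras and then check that it respects the $\D$-structure. Write $f_i \colon S \to S$ for the coordinate maps of the $\D$-structure $f$ with respect to the basis $\varepsilon$, and $f_\theta$ for the associated compositions as in Definition~\ref{def-theta-etheta}. Since $R\{\bar{t}\}_\D^\varepsilon = R[t_i^\theta \colon i \in I, \theta \in \Theta]$ is an ordinary polynomial $R$-algebra, the universal property of polynomial rings produces a unique $R$-algebra homomorphism $\text{\normalfont ev}_{\bar{a}}$ with $t_i^\theta \mapsto f_\theta(a_i)$. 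As the empty word gives $f_\emptyset = \id_S$, and $t_i$ is the indeterminate attached to the empty word, this sends $t_i \mapsto a_i$ as required.

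The key step is to verify that $\text{\normalfont ev}_{\bar{a}}$ is a $\D$-homomorphism, that is, $\D(\text{\normalfont ev}_{\bar{a}}) \circ e' = f \circ \text{\normalfont ev}_{\bar{a}}$. Both composites are $k$-algebra homomorphisms $R\{\bar{t}\}_\D^\varepsilon \to \D(S)$, so it suffices to check equality on the $R$-algebra generators, namely on $R$ itself and on each $t_i^\theta$. On $R$ the identity holds because $(S,f)$ is an $(R,e)$-algebra, so its structure map is a $\D$-homomorphism. On $t_i^\theta$ I would compute both sides directly: applying $\D(\text{\normalfont ev}_{\bar{a}})$ to $e'(t_i^\theta) = \sum_{j=1}^l t_i^{\theta j}\varepsilon_j$ yields $\sum_j f_{\theta j}(a_i)\varepsilon_j$, while $f(\text{\normalfont ev}_{\bar{a}}(t_i^\theta)) = f(f_\theta(a_i)) = \sum_j f_j(f_\theta(a_i))\varepsilon_j$. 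These agree because the composition convention of Definition~\ref{def-theta-etheta} gives $f_{\theta j} = f_j \circ f_\theta$, and the $\varepsilon_j$ are linearly independent over $S$.

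It then remains to establish surjectivity and uniqueness. For surjectivity, the image $\text{\normalfont ev}_{\bar{a}}(R\{\bar{t}\}_\D^\varepsilon)$ is an $R$-subalgebra of $S$; it is moreover a $\D$-subring, since for any $x$ one has $f(\text{\normalfont ev}_{\bar{a}}(x)) = \D(\text{\normalfont ev}_{\bar{a}})(e'(x))$, which lies in $\D$ of the image. As the image contains every $a_i$, it contains the $\D$-ring generated by $\bar{a}$ over $(R,e)$, which is $R\{\bar{a}\}_\D = S$. For uniqueness, any $(R,e)$-algebra homomorphism $\psi$ with $\psi(t_i) = a_i$ must satisfy $\psi(t_i^\theta) = f_\theta(a_i)$ for every $\theta$; this follows by induction on the length of $\theta$, applying $f \circ \psi = \D(\psi) \circ e'$ to $t_i^\theta$ and comparing $\varepsilon_j$-coefficients to pass from $\theta$ to $\theta j$. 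Since $\psi$ and $\text{\normalfont ev}_{\bar{a}}$ then agree on $R$ and on all generators $t_i^\theta$, they coincide.

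The only substantive point is the bookkeeping around the word-composition convention, ensuring the comparison of $\varepsilon_j$-coordinates is carried out consistently in both the structure-preservation check and the inductive uniqueness argument; conceptually the lemma is simply the $\D$-analogue of the universal property of a polynomial ring and presents no deeper difficulty.
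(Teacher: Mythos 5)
Your proposal is correct and follows essentially the same route as the paper: define $\text{\normalfont ev}_{\bar{a}}(t_i^\theta) = f_\theta(a_i)$, verify the $\D$-homomorphism condition on the generators via the identity $f_{\theta j} = f_j \circ f_\theta$, and deduce uniqueness from the fact that any $\D$-homomorphism sending $t_i \mapsto a_i$ must send $t_i^\theta \mapsto f_\theta(a_i)$. Your surjectivity argument (the image is a $\D$-subring containing $R$ and $\bar{a}$) and the induction on word length for uniqueness are just slightly more explicit versions of steps the paper treats as immediate.
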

\begin{proof}
Define $\text{ev}_{\bar{a}}(t_i^\theta) = f_\theta (a_i)$ (see Definition~\ref{def-theta-etheta}). Then, $\text{ev}_{\bar{a}}$ is clearly a surjective $R$-algebra homomorphism. To show it is a $\D$-homomorphism, we need to show that the following diagram commutes:
\[
\begin{tikzcd}
\mathcal{D}(R\{\bar{t}\}_\mathcal{D}^\varepsilon) \arrow[r, "\mathcal{D}(\text{ev}_{\bar{a}})"] & \mathcal{D}(S)   \\
R\{\bar{t}\}_\mathcal{D}^\varepsilon \arrow[u, "e"] \arrow[r, "\text{ev}_{\bar{a}}"]            & S \arrow[u, "f"]
\end{tikzcd}
\]
This follows from a short computation.
\end{proof}

This lemma yields:

\begin{corollary}
Suppose that $\varepsilon$ and $\omega$ are two bases of $\D$. Then, $R\{T\}_\D^\varepsilon$ and $R\{T\}_\D^\omega$ are isomorphic as $(R,e)$-algebras.
\end{corollary}

As a result of this corollary, we omit the superscript and just write $R\{T\}_\D$.

\begin{remark}
Combining the above lemmas, we see that any $(R,e)$-algebra is a quotient of some $\D$-polynomial algebra over $(R,e)$ by a $\D$-ideal.
\end{remark}

We now return to the construction of the $\D$-Weil restriction. As usual, $(A,e)$ is a $\D$-ring, and $(B,f)$ is an $(A,e)$-algebra where $B$ is finite and free as an $A$-module with basis $b_1, \ldots, b_r$. Recall that the component of the unit of the classical adjunction at the polynomial algebra $B\{T\}_\D$ is
\[
\eta_{B\{T\}_\D} \left( t^\theta \right) = \sum_{i=1}^r t^\theta(i) \otimes b_i
\]
We first construct the $\D$-Weil descent for a $\D$-polynomial algebra over $(B,f)$.

\begin{lemma}\label{lemma-D-str-on-poly-ring}
Let $T$ be a set of indeterminates. Then, there exists a $\D$-structure $s$ on $W(B\{T\}_\D)$ making $(W(B\{T\}_\D), s)$ into an $(A,e)$-algebra and $\eta_{B\{T\}_\D}$ into a $\D$-homomorphism.
\end{lemma}
\begin{proof}
$W(B\{T\}_\D) = A\{T\}_\D^{\otimes r}$, and applying Lemma~\ref{technical-lemma} with $\eta_{B\{T\}_\D}$ tells us that $\eta_{B\{T\}_\D}$ is a $\D$-homomorphism if and only if 
\[
(\lambda_i \eta_{B\{T\}_\D} h_j (t^\theta)) = M \cdot (s_j \lambda_i \eta_{B\{T\}_\D}(t^\theta))
\]
for every $t^\theta$. Now, $(s_j \lambda_i \eta_{B\{T\}_\D}(t^\theta)) = s_j(t^\theta(i))$, and $(\lambda_i \eta_{B\{T\}_\D} h_j (t^\theta)) = t^{\theta j}(i)$. Since $M$ is invertible we have $s_j(t^\theta(i)) = M^{-1} \cdot (t^{\theta j}(i))$. This gives an explicit expression for $s_j$ on each generator of $A\{T\}^{\otimes r}_\D$ and hence an explicit expression for $s$. Since $A\{T\}^{\otimes r}_\D$ is a polynomial algebra, this gives a $\D$-ring structure on $A\{T\}^{\otimes r}_\D$ making $\eta_{B\{T\}_\D}$ into a $\D$-homomorphism.
\end{proof}

\begin{remark}
Note that $W(B\{T\}_\D)$ is a polynomial algebra, but that, in general, $s$ is not the $\D$-ring structure that makes $(W(B\{T\}_\D), s)$ a $\D$-polynomial algebra as in Definition~\ref{D-polynomial-ring-def}---it is twisted by $M^{-1}$. The same occurs in the differential case; see the proof of Theorem 3.2 of \cite{differential_weil_descent}.
\end{remark}

Now let $(C,g)$ be a $(B,f)$-algebra. By Lemma~\ref{D-poly-onto}, there is a set of indeterminates $T$ and a surjective $(B,f)$-algebra homomorphism $\pi_C \colon B\{T\}_\D \to C$, where $B\{T\}_\D$ has the standard $\D$-structure $h$ extending $f$ with $h(t^\theta) = t^{\theta 1} \varepsilon_1 + \ldots + t^{\theta l} \varepsilon_{l}$. The component of the unit of the classical adjunction at $C$, $\eta_C$, is given by

\[
\eta_C \left( \pi_C (t^{\theta}) \right) = \sum_{i=1}^r W(\pi_C)(t^{\theta}(i)) \otimes b_i
\]

The $\D$-ring structure from Lemma~\ref{lemma-D-str-on-poly-ring} will induce one on $W(C) = W(B\{T\}_\D) / I_C$. Recall from Section~2.1 the definition of the ideal $I_C$. This ideal is generated by the elements $\lambda_i (\eta_{B\{T\}_\D}(\gamma))$ as $\gamma$ ranges over $\ker \pi_C$, and $W(\pi_C)$ is the residue map of this ideal.

\begin{lemma}\label{lemma-Ic-is-D-ideal}
The ideal $I_C$ of $W(B\{T\}_\D)$ is a $\D$-ideal for the $s$ given in Lemma~\ref{lemma-D-str-on-poly-ring}.
\end{lemma}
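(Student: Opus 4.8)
The plan is to verify the $\D$-ideal condition $s(I_C) \subseteq \D(I_C)$ by checking it on a generating set only. Since $\D(I_C)$ is an ideal of $\D(W(B\{T\}_\D))$ and $s$ is a $k$-algebra homomorphism, for any $w \in W(B\{T\}_\D)$ and any generator $x$ of $I_C$ we have $s(wx) = s(w)\,s(x)$, so $s(x) \in \D(I_C)$ forces $s(wx) \in \D(I_C)$; additivity of $s$ then yields $s(I_C) \subseteq \D(I_C)$. Thus it suffices to show that $s$ sends each defining generator $\lambda_i(W_{B\{T\}_\D}(\gamma))$, with $\gamma \in \ker \pi_C$ and $1 \le i \le r$, into $\D(I_C)$.

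To unpack the target, I would first recall that $s = \sum_j s_j \varepsilon_j$ in coordinates, and that an element $\sum_m w_m \varepsilon_m$ of $\D(W(B\{T\}_\D))$ lies in $\D(I_C) = I_C \otimes_k \D$ if and only if every coefficient $w_m$ lies in $I_C$ (using that $\D$ is free over $k$ with basis $\varepsilon$, so $I_C \otimes_k \D = \bigoplus_m I_C \varepsilon_m$). Hence the whole statement reduces to showing $s_j(\lambda_i(W_{B\{T\}_\D}(\gamma))) \in I_C$ for all $i,j$.

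The key computation exploits that $W_{B\{T\}_\D}$ is a $\D$-homomorphism, as established in Lemma~\ref{lemma-D-str-on-poly-ring}. By Lemma~\ref{technical-lemma}, equation $(*)$ then holds for every element of $B\{T\}_\D$, and since $M$ is invertible I can solve for the $s_j$-values:
\[
(s_j \lambda_i W_{B\{T\}_\D}(\gamma)) = M^{-1} \cdot (\lambda_i W_{B\{T\}_\D} h_j(\gamma)),
\]
where $h$ is the $\D$-structure on $B\{T\}_\D$ with coordinate operators $h_j$. The crucial point is now that $\pi_C$ is a $\D$-homomorphism, so $\ker \pi_C$ is a $\D$-ideal; therefore $h(\gamma) \in \D(\ker \pi_C)$, which means each coordinate $h_j(\gamma)$ again lies in $\ker \pi_C$. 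Consequently every entry $\lambda_i(W_{B\{T\}_\D}(h_j(\gamma)))$ of the right-hand vector is itself a defining generator of $I_C$. As $M^{-1}$ has entries in $A \subseteq W(B\{T\}_\D)$ and $I_C$ is an ideal, each entry of the matrix product is an $A$-linear combination of generators of $I_C$, hence lies in $I_C$; that is, $s_j(\lambda_i(W_{B\{T\}_\D}(\gamma))) \in I_C$, as required.

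I expect the main obstacle to be exactly the interplay in the last paragraph: recognising that invertibility of $M$ is what lets one express the operators $s_j$ on the generators of $I_C$ in terms of the operators $h_j$ applied to $\gamma$, and then invoking that $\ker \pi_C$ is a $\D$-ideal to keep those operators inside the kernel, thereby landing the output back among the defining generators of $I_C$. Everything else---the reduction to generators, the coordinate description of $\D(I_C)$, and the fact that $M^{-1}$ has entries in $A$---is routine bookkeeping.
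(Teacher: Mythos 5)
Your proof is correct and follows essentially the same route as the paper's: it uses Lemma~\ref{technical-lemma} together with the fact that $W_{B\{T\}_\D}$ is a $\D$-homomorphism to relate $(s_j\lambda_i W_{B\{T\}_\D}(\gamma))$ to $(\lambda_i W_{B\{T\}_\D} h_j(\gamma))$ via $M$, then invokes that $\ker\pi_C$ is a $\D$-ideal and that $M$ is invertible over $A$. The only difference is that you spell out the routine reductions (checking on generators, the coordinatewise description of $\D(I_C)$) that the paper leaves implicit.
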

\begin{proof}
Let $\gamma \in \ker \pi_C$. By definition of $I_C$, we need to show $s(\lambda_i(\eta_{B\{T\}_\D}(\gamma))) \in \D(I_C)$ for each $i$, that is, that the vector $(s_j \lambda_i \eta_{B\{T\}_\D} (\gamma)) \in I_C$.

Since $\eta_{B\{T\}_\D}$ is a $\D$-homomorphism, we have
\[
(\lambda_i \eta_{B\{T\}_\D} h_j(\gamma)) = M \cdot (s_j \lambda_i \eta_{B\{T\}_\D} (\gamma))
\]

Now $\ker \pi_C$ is a $\D$-ideal for $h \colon B\{T\}_D \to \D(B\{T\}_D)$ (the standard $\D$-polynomial structure), and so $h_j(\gamma) \in \ker \pi_C$. Then, by construction of $I_C$, $(\lambda_i \eta_{B\{T\}_\D} h_j(\gamma))$ is in $I_C$. Since $M$ is invertible, $(s_j \lambda_i \eta_{B\{T\}_\D} (\gamma)) \in I_C$.
\end{proof}

Lemmas~\ref{D-structure-on-quotient} and \ref{lemma-Ic-is-D-ideal} imply that the $s$ from Lemma~\ref{lemma-D-str-on-poly-ring} induces a $\D$-structure $g^W$ on $W(C) = W(B\{T\}_\D) / I_C$ which makes it an $(A,e)$-algebra and $W(\pi_C)$ an $(A,e)$-algebra homomorphism. One then readily checks it makes $\eta_C$ into a $\D$-homomorphism by an argument similar to Theorem~3.2 of \cite{differential_weil_descent}.

Therefore, we have provided an explicit way to construct the $\D$-Weil descent $W^\D(C,g) = (W(C), g^W)$.

\bigskip

\end{document}